\theoremstyle{plain}
\newtheorem{thm}{Theorem}[section]
\newtheorem{prop}{Proposition}[section]
\newtheorem{lemma}{Lemma}[section]
\newtheorem{rem}{Remark}[section]
\numberwithin{equation}{section}
\begin{document}
\title{Spatio-temporal dynamics for a class of monotone evolution  systems}
\author{Taishan Yi\\
School of Mathematics (Zhuhai)\\
Sun Yat-Sen University\\
Zhuhai, Guangdong 519082, China
\and
Xiao-Qiang Zhao\thanks{The corresponding author.
	E-mail: zhao@mun.ca}\\
Department of Mathematics and Statistics\\
Memorial University of Newfoundland\\
St. John's, NL A1C 5S7, Canada}

\date {}
\maketitle

\begin{abstract}
In this paper,  under an abstract setting we establish  the spreading properties
and the existence, non-existence and global attractivity of spatially
heterogeneous steady states  for a large class of monotone
evolution systems without the translational
monotonicity  under the assumption that one limiting system has
both leftward and rightward  spreading speeds  and the
other one has the uniform asymptotic annihilation. Then we apply the
developed theory to study the global  dynamics  of asymptotically homogeneous  integro-difference equations, and  provide a counter-example to show that
 the  value of the nonlinear function  at the finite range of location  may
 give rise to  nontrivial fixed points.
\end{abstract}

\noindent {\bf Key words:}  Monotone systems, asymptotic  translation invariance,  steady states, spreading properties, traveling waves.

\smallskip

\noindent {\bf AMS Subject Classification.} 35B40, 37C65, 37L15, 92D25

\baselineskip=18pt

\section {Introduction}
Since the pioneering works of Fisher \cite{f1937} and Kolmogorov et al. \cite{kpp1937},  extensive studies have been conducted on traveling wave solutions and propagation properties of various evolution systems with spatial structures, see, e.g., \cite{aw1975,aw1978,LYZ,l1989} and references therein.
From the perspective of dynamical systems, Weinberger \cite{w1982} developed a theoretical framework for traveling waves and spreading speeds in monotone discrete-time systems possessing spatial translation invariance. This framework was later substantially extended to monotone discrete and continuous-time semiflows in \cite{lz2010,lz2007,w2002}, as well as to certain non-monotone systems in \cite{hz2008, yz2025,yz2015}, thereby enabling its application to a wide range of evolution systems in both homogeneous and periodic environments.

Motivated by ecological systems under climate changes,  an increasing number of studies have focused on traveling waves and the asymptotic dynamics of evolution equations in shifting habitats, see, e.g., \cite{flw2016,FangPengZhao2018,hwz2024,jyz2024,lwz2018,pl2004,zhang2017persistence,ZhangZhao2019,zyc2024} and references therein.
In the study of traveling  waves and spreading properties for monotone  systems,
it is often assumed that the solution maps admit the  spatial translation
invariance (see, e.g., \cite{lz2007,w1982}). Returning to an evolution equation with
spatial variable $x\in \mathbb{R}$,  this characterizes the property that if $u(t,x)$ is a solution, then so is
$u(t,x-y)$ for any $y\in  \mathbb{R}$.  As a prototypical  example  of systems
without the translation invariance,  we  consider a scalar reaction-diffusion equation with a shifting habitat  (see, e.g., \cite{bdnz2009,Scheel,lbsf2014}):
\begin{equation}\label{shift}
	u_t=d \Delta u+f(x-ct,u), \quad  x\in\mathbb{R}, \,    \, t\ge 0,
\end{equation}
where $d>0$, $c \in\mathbb{R}$ is the shifting speed, and  $f$ is a continuous function such that $f(z,0)=0$ fo all $z\in\mathbb{R}$.
Letting $z=x-ct$ and $v(t,z)=u(t,z+ct)$,  we then transform equation \eqref{shift} into
the following spatially heterogeneous  reaction-diffusion-advection equation:
\begin{equation}\label{shift2}
	v_t=d \Delta v +cv_z+f(z,v), \quad  z\in\mathbb{R}, \,    \, t\ge 0.
\end{equation}
Let $\Phi_t$ be the time-$t$ map of equation \eqref{shift2}. Since
$f(z,v)$ depends on $z$,  it is easy to see that the map $\Phi_t$ does not
admit the translation invariance.

Assume that $\lim_{z\to \pm \infty} f(z,v)=f_{\pm}^{\infty}(v)$ uniformly for $v$ in any bounded subset of
$\mathbb{R}_{+}$.  Then equation \eqref{shift2}  has the following two limiting equations:
\begin{equation}\label{limitE}
	v_t=d \Delta v +cv_z+f_{\pm}^{\infty}(v), \quad  z\in\mathbb{R}, \,    \, t\ge 0.
\end{equation}
Let  $\Phi_t^{\pm}$ be the time-$t$ map of equation \eqref{limitE}.  Clearly,
$\Phi_t^{\pm}$ admits the translation invariance in the sense that
$\Phi_t^{\pm}\circ T_y=T_y\circ \Phi_t^{\pm}$ for all  $y\in \mathbb{R}$.
If, in addition, $f(z,v)$ is nondecreasing in $z\in\mathbb{R}$,  then we can verify that each $\Phi_t$ has the following translational  monotonicity:
\begin{enumerate}
	\item[(\bf TM)]  $T_{-y}\circ  \Phi_t[\varphi]\geq \Phi_t\circ T_{-y}[\varphi],
	\quad     \forall \varphi\in BC(\mathbb{R},\mathbb{R}_+),\,    \,
	y\in  \mathbb{R}_+$.
\end{enumerate}
Since systems \eqref{shift2} and \eqref{limitE} admit the comparison principle,
it follows that  their time-$t$ maps $\Phi_t$ and  $\Phi_t^{\pm}$ are
order preserving (i.e., monotone).

Under an abstract setting,  we  studied the  propagation dynamics for a large class of monotone evolution systems without translation invariance in \cite{yz2020}. More precisely, we established the spreading properties
and the existence of  nontrivial steady states for monotone semiflows
with the property ({\bf TM}) under the assumption that one limiting system has
both leftward and rightward  spreading speeds (good property G) and the
other one has the uniform asymptotic annihilation (bad property B).
The global dynamics of such monotone systems with the BB  and GG
combinations were further investigated in our subsequent papers \cite{yz2023,yz2025},
respectively.  Here we should point out that the nonexistence of nontrivial fixed points
were not addressed in \cite{yz2020}.

The purpose of our current paper is to investigate the propagation dynamics
for monotone systems without the assumption  ({\bf TM}) in the case where
two limiting systems have good property  G  and bad property  B
(i.e., the GB combination), respectively. This greatly extends the application range of the
developed theory to population models in ecology and epidemiology. For example,
letting $f(x,u)=u(r(x)-u)$ in equation \eqref{shift}  (i.e., in the Fisher-KPP case), we do not need  to  request that the resource function $r(x)$  be nondecreasing in $x\in \mathbb{R}$
after assuming $-\infty<r(-\infty)<0< r(\infty) <\infty$.
To overcome the difficulty induced by the lack of property ({\bf TM}),
we introduce the assumption of  linear upper systems. We also identify an  integro-difference equation to  show that the existence of nontrivial fixed points may
depend on both intermediate and limiting systems.

The rest of this paper is organized as follows.  In Section 2,  we prove the upward convergence and asymptotic annihilation of solutions,
and the existence, non-existence and global attractivity
of nontrivial fixed points for discrete-time semiflows under  the assumption of  {\bf(UC)} and {\bf(UAA)} combination for two  limiting systems.
In Section 3, we extends these results to  continuous-time  semiflows and nonautonomous systems, respectively.
In Section 4, we apply our developed theory  in Section 2 to an integro-difference equation
for its global dynamics.  We also construct a counter-example to show that even if $c_-^*<0$ and $Q$ satisfies  {\bf (UAA)} at $-\infty$, the intermediate system can give rise to nontrivial fixed points.  In the Appendix, we provide a sufficient condition for a class of noncompact maps to admit the orbit precompactness respect to the compact open topology (i.e., the local uniform convergence topology).

\section{Discrete-time  semiflows}
Let $\mathbb{Z}$,
$\mathbb{N}$, $\mathbb{R}$, $\mathbb{R}_+$, $\mathbb{R}^N$, $\mathbb{R}_+^N$, and $\mathbb{R}^{N\times N}$ be the sets of all
integers, nonnegative integers, reals,   nonnegative reals,  N-dimensional real vectors,  N-dimensional  nonnegative real vectors, and $N\times N$-real matrices, respectively. We equip  $\mathbb{R}^N$ with the norm $||\xi||_{\mathbb{R}^N}=
\left(\sum \limits_{n=1}^{N}\xi_n^2\right)^{\frac 12}$.  Let $X=BC(\mathbb{R},\mathbb{R}^N)$ be the normed
vector space of all bounded and continuous functions from
$\mathbb{R}$ to $\mathbb{R}^N$ with the norm $||\phi||_{X}=
\sum \limits_{n=1}^{\infty}2^{-n}\sup\limits_{|x|\leq n}\{||\phi(x)||_{\mathbb{R}^N}\}$. Let $X_+=\{\phi\in
X:  \,   \phi(x)\in \mathbb{R}_+^N, \,  \forall x\in \mathbb{R}\}$ and
$X_+^\circ=\{\phi\in X:   \, \phi(x)\in Int(\mathbb{R}_+^N),
\,  \forall  x\in \mathbb{R}\}$.

For a given compact topological space $M$,  let $C=C(M,X)$ be the
normed vector space of all continuous functions from $M$
into $X$ with the norm $||\varphi||_{C}=
\sup\limits_{\theta\in M}\{||\varphi(\theta)||_{X}\}$,  $C_+=C(M,X_+)$ and
$C_{+}^{\circ}=C(M,X_{+}^\circ)$. It then follows that $C_+$ is a closed
cone in the normed vector space $C$.  Let
$Y=C(M,\mathbb{R}^N)$ be the normed vector space of all
continuous functions from $M$ into $\mathbb{R}^N$ with the
norm $||\beta||_{Y}=  \sup\limits_{\theta\in
M} \{||\beta(\theta)||_{\mathbb{R}^N}\}$ and
$Y_{+}=C(M,\mathbb{R}_+^N)$.  Clearly,  $Int(Y_+)=\{\beta\in Y: \, \beta (\theta)\in Int(\mathbb{R}_+^N), \,  \forall  \theta\in M\}$.

For the sake of convenience, we identify an element $\varphi\in C$
with a bounded and continuous function from $M\times
\mathbb{R}$ into $\mathbb{R}^N$. For $a \in \mathbb{R}^N$, $\hat{a}\in
X$ is defined as ${\hat{a}}(x)=a$ for all $x\in \mathbb{R}$.
Similarly, $\hat{\hat{a}}\in C$ is defined as
$\hat{\hat{a}}(\theta)=\hat{a}$ for all $\theta\in M$.
Moreover, for any  $\phi\in X$ and $\beta\in Y$, we define
$\tilde{\phi}\in C$ and $\tilde{\beta}\in C$, respectively,  by
$\tilde{\phi}(\theta,x)=\phi(x)$  and
$\tilde{\beta}(\theta,x)=\beta(\theta)$ for all $(\theta,x)\in
M\times \mathbb{R}$. In the following, we always  identify
$\hat{a}$ or $\hat{\hat{a}}$ with $a$ for $a\in \mathbb{R}^N$.
Furthermore, we identify $\phi\in X$ and $\beta\in Y$ with
$\tilde{\phi}\in C$ and $\tilde{\beta}\in C$, respectively. Accordingly, we can regard  both $X$ and $Y$ as subspaces of $C$.

 For any $\xi$, $\eta \in \mathbb{R}^N$,  we write $\xi\geq_{\mathbb{R}^N} \eta$ if $\xi-\eta
 \in \mathbb{R}^N_+$; $\xi >_{\mathbb{R}^N} \eta$ if $\xi\geq_{\mathbb{R}^N} \eta$ and $\xi\neq \eta$;
 $\xi \gg_{\mathbb{R}^N} \eta$ if $\xi- \eta \in Int(\mathbb{R}^N_+)$.
 Similarly, we use $Y_+$ and $Int(Y_+)$ to define $\geq_Y$, $>_Y$ and $\gg_Y$ for the  space $Y$.
 For any $\xi$, $\eta \in X$,  we write $\xi\geq_X \eta$ if $\xi-\eta
 \in X_+$; $\xi >_X \eta$ if $\xi\geq_X \eta$ and $\xi\neq \eta$;
 $\xi \gg_X \eta$ if $\xi- \eta \in X_+^\circ$.
We also employ  $C_+$ and $C_+^\circ$ to define $\geq_C$ $>_C$, $\gg_C$ for the space $C$ in a similar way. For simplicity, we  write
$\geq$, $>$, $\gg$, and $||\cdot||$, respectively, for $\geq_*$,
$>_*$, $\gg_*$, and $||\cdot||_*$, where $*$ stands  for one of ${\mathbb{R}^N}$, $X$,
$Y$ and $C$.

For any given $s,r\in Int(\mathbb{R}_+^N)$ with $s\geq r$, define $C_r=\{\varphi\in
C:0\leq\varphi\leq r\}$ and $C_{r,s}=\{\varphi\in C:r\leq\varphi\leq
s \}$. For any given $\varphi\in C_+$, define $C_{\varphi}=\{\psi\in C:
0\le \psi\le \varphi\}$.
We also  define $[\varphi,\psi]_*=\{\xi\in *:\varphi\leq_*\xi\leq_*\psi\}$ and  $[[\varphi,\psi]]_*=\{\xi\in *:\varphi\ll_*\xi\ll_*\psi\}$ for $\varphi,\psi\in *$ with $\varphi\leq_*\psi$, where $*$ stands  for one of ${\mathbb{R}^N}$, $X$,
$Y$ and $C$.  Let $\check{1}:=(1,1,\cdots,1)^{T}\in \mathbb{R}^N$ and $\check{\bf 1}\in \mathbb{R}^{N\times N}$ with $(\check{\bf 1})_{ij}=1, \forall i,j\in \{1,2,\cdots,N\}$.
For any given $y\in\mathbb{R}$, we define the spatial translation operator
$T_{y}$ on $C$ by
$$T_{y}[\varphi](\theta,x)=\varphi(\theta,x-y),\quad \forall \varphi\in C,\, \theta\in M, \, x\in\mathbb{R}.
$$
We also define the reflection operator $\mathcal{S}$ on
$C$ by
$$[\mathcal{S}(\varphi)](\theta,x)= \varphi(\theta,-x),   \, \forall
(\theta,x) \in M \times  \mathbb{R},  \, \varphi\in C.
$$
As a convention, we say a  map ${Q}:  C_+\to C_+$ is continuous
if for any given  $r\in Int(\mathbb{R}_+^N)$, $Q|_{C_r}:C_r\to C_{+}$ is continuous in the usual sense.  The map ${Q}:  C_+\to C_+$ is said to be monotone if ${Q}[\phi]\leq {Q}[\psi]$ whenever $\phi,\psi\in  C_{+}$ with
$\phi\leq \psi$.

Throughout the whole paper,  unless specified otherwise, we always assume  that the map ${Q}:  C_+\to C_+$ is continuous and monotone, and  that
\begin{enumerate}

\item [{\bf (A)}] There exist $r^*\in Int(Y_+)$ and a continuous map $Q_+:  C_+\to C_+$ such that $Q_+[r^*]=r^*$ and
$\lim\limits_{y\to\infty}T_{-y}\circ Q^n \circ T_y[\varphi]=Q_+^n[\varphi]$  in $C$ for all $\varphi\in C_+$ and $n\in \mathbb{N}$.

\item [{\bf (UB)}]  There exists a sequence $\{\phi_k^*\}_{k\in \mathbb{N}}$ in $Int(Y_+)$ such that $\phi_{k+1}^*>\phi_k^*$, $Y_+=\bigcup\limits_{k\in \mathbb{N}}[0,\phi_k^*]_{Y}$, and $Q[C_{\phi_k^*}]\subseteq C_{\phi_k^*}$  for all $k\in \mathbb{N}$.
\end{enumerate}

In this section, we study the upward convergence, asymptotic annihilation, and
nontrivial  fixed points for discrete-time semiflows under  the assumption of  {\bf(UC)} and {\bf(UAA)} combination for the limiting system.  Following  \cite{yz2025}, we need the several hypotheses about  the  strong positivity, upward convergence, and uniform asymptotic annihilation for  the  maps $Q$ and  $Q_+$.

\begin{enumerate}

\item [{\bf (SP)}]  There exist $(\rho^*,N^*)\in (0,\infty)\times \mathbb{N}$ and  $\varrho^*\in (\rho^*,\infty)$  such that for any  $a\in \mathbb{R}$  and $\varphi\in
 C_{+}\setminus \{0\}$ with   $\varphi(\cdot, a)\subseteq Y_+\setminus \{0\}$,
we have  $Q^n[\varphi](\cdot,x)\in Int(Y_+)$ for all  $n\geq N^*$ and $x\in [a+n\rho^*,a+n\varrho^*]$.

\item [{\bf (UC)}]  There exist $c_-^*,c_+^*\in \mathbb{R}$  such that $c_+^*+c_-^*>0$ and
$$
\lim\limits_{n\rightarrow \infty}
\max\limits_{x\in \mathcal{A}_{\varepsilon,n}^+}
||Q_+^n[\varphi](\cdot,x)-r^*(\cdot)||=0, \, \forall \varepsilon\in (0,\frac{c_+^*+c_-^*}{2}), \, \varphi\in
C_{+}\setminus \{0\},
$$
 where $\mathcal{A}_{\varepsilon,n}^+=n{[-c_-^*+\varepsilon,c_+^*-\varepsilon]}$.

\item [{\bf(GUAA$_-$)}] There exist a sequence $\{(r^{**}_l,\widehat{Q}_l,\widehat{Q}_l^+)\}_{l\in \mathbb{N}}$  such that for any $l\in \mathbb{N}$,
 there hold
\begin{enumerate}
\item [(i)]   $r^{**}_l\in Int(Y_+)$  and $\widehat{Q}_l,\widehat{Q}_l^+:C_+\to C_+ $ satisfy  $C_+=\bigcup\limits_{l\in\mathbb{N}}C_{r^{**}_l}$ and
$$
Q[\varphi]\leq \widehat{Q}_l[\varphi]\leq \widehat{Q}_l[\psi]\leq \widehat{Q}_l[r^{**}_l]\leq r^{**}_l, \, \forall 0\leq \varphi\leq \psi \in C_{ r^{**}_l}.
$$

\item [(ii)]    $\lim\limits_{n\to \infty}||(\widehat{Q}_l^+)^n[r^{**}_l](\cdot,0)||=0$ and
$$
\liminf\limits_{y\to -\infty} \Big[\inf \{(\widehat{Q}_l^+)^n[r^{**}_l](\theta,0)-T_{-y}\circ (\widehat{Q}_l)^n
\circ T_y[r^{**}_l](\theta,0):\theta\in M\}\Big] \in \mathbb{R}_+^N,
\, \forall n\in \mathbb{N}.
$$
\end{enumerate}
\end{enumerate}

Let  ${\bf {1}}_{\mathbb{R}_+}:\mathbb{R} \to \mathbb{R}$ be defined by ${\bf {1}}_{\mathbb{R}_+}(x)=1, \forall x\in \mathbb{R}_+$, and   ${\bf {1}}_{\mathbb{R}_+}(x)=0, \forall x\in (-\infty,0)$.
The subsequent upward convergence result significantly  improves  \cite[Theorem 3.1] {yz2025}
by removing  the  assumption of translational monotonicity.

\begin{thm} \label{thm2.1}
Let   {\bf (SP)},  {\bf (UC)} and {\bf (GUAA$_-$)}  hold. Then for any given $\varphi\in C_{+}\setminus\{0\}$,
the following statements are valid:
\begin{itemize}

\item [{\rm (i)}]  If  $\min\{c_-^*,c_+^*\}>0$, then  $$\lim\limits_{\alpha \rightarrow \infty}
\Big[\sup\{||Q^n[\varphi](\cdot, x)-r^*\cdot {\bf {1}}_{\mathbb{R}_+}(x)||:(n,x)\in \mathcal{
B}_{\alpha,\varepsilon}^+\}\Big]= 0,$$ where $\varepsilon\in (0,c_+^*)$ and $\mathcal{B}_{\alpha,\varepsilon}^+=\{(n,x)\in \mathbb{N}\times \mathbb{R}:n\geq \alpha \mbox{ and } x \in [\alpha, n(c_+^*-\varepsilon)] \bigcup (-\infty,-\alpha]\}$.

\item [{\rm (ii)}]  If  $c_+^*>0$, then  $$\lim\limits_{n \rightarrow \infty}
\Big[\sup\{||Q^n[\varphi](\cdot, x)-r^*\cdot {\bf {1}}_{\mathbb{R}_+}(x)||:x\in n \mathcal{B}_{\varepsilon}^+\}\Big]= 0,$$ where $\varepsilon\in (0,\frac{1}{2}\min\{c_+^*,c_+^*+c_-^*\})$ and $\mathcal{B}_{\varepsilon}^+= [\max\{\varepsilon,-c_-^*+\varepsilon\}, (c_+^*-\varepsilon)] \bigcup (-\infty,-\varepsilon]$.

\end{itemize}
\end{thm}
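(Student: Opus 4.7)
The plan is to decompose the $(n,x)$-region into a left annihilation part (where $x\to-\infty$) and a rightward spreading cone (where $x$ stays inside the right-moving cone of $Q_+$), and to treat these by very different mechanisms. On the left we use the monotone domination $Q\le\widehat{Q}_l$ from {\bf (GUAA$_-$)} together with the uniform asymptotic annihilation of $\widehat{Q}_l^+$ at $-\infty$; on the right we combine the strong positivity {\bf (SP)}, the asymptotic translation invariance {\bf (A)}, and the upward spreading {\bf (UC)} of $Q_+$ to sandwich $Q^n[\varphi]$ around $r^*$. Throughout, monotonicity of $Q$ on $C_+$ is the workhorse that promotes pointwise-in-$x$ inequalities to orbit-level inequalities.

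For the annihilation part, given $\varphi\in C_+\setminus\{0\}$, pick $l$ with $\varphi\le r^{**}_l$ via $C_+=\bigcup_l C_{r^{**}_l}$ in {\bf (GUAA$_-$)}(i). The chain $Q[\psi]\le\widehat{Q}_l[\psi]\le\widehat{Q}_l[r^{**}_l]\le r^{**}_l$ on $C_{r^{**}_l}$ together with monotonicity of $\widehat{Q}_l$ yields $Q^n[\varphi]\le\widehat{Q}_l^n[r^{**}_l]$, and the sequence $n\mapsto\widehat{Q}_l^n[r^{**}_l]$ is non-increasing. Given $\delta>0$, apply {\bf (GUAA$_-$)}(ii) in two stages: first pick $n_0$ so that $\|(\widehat{Q}_l^+)^{n_0}[r^{**}_l](\cdot,0)\|$ is sufficiently small, then, using $T_y[r^{**}_l]=r^{**}_l$ since $r^{**}_l$ is $x$-independent, pick $Y_0>0$ so that $\widehat{Q}_l^{n_0}[r^{**}_l](\theta,y)\le(\widehat{Q}_l^+)^{n_0}[r^{**}_l](\theta,0)+\eta\check{1}$ for all $\theta\in M$ and $y\le -Y_0$ (with $\eta$ chosen small enough that the resulting $Y$-norm is less than $\delta$). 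The non-increase then gives $\|Q^n[\varphi](\cdot,x)\|<\delta$ for all $n\ge n_0$ and $x\le -Y_0$; this is absorbed into $\alpha\ge\max\{n_0,Y_0\}$ in part (i) and into $n\ge\max\{n_0,Y_0/\varepsilon\}$ in part (ii), since the constraint $x\le -n\varepsilon$ eventually forces $x\le -Y_0$.

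For the rightward cone, apply {\bf (SP)} at some $a_0$ with $\varphi(\cdot,a_0)\ne 0$ and use continuity plus the compactness of $M$ to extract a continuous compactly supported bump $G\in C_+$ with $Q^{N^*}[\varphi]\ge G$ and $G\ge\eta_0\check{1}$ on a sub-interval. Monotonicity gives $Q^n[\varphi]\ge Q^{n-N^*}[G]$. The strategy is: iterate {\bf (SP)} on $G$ to relocate a uniformly sized sub-bump at a position $y$ arbitrarily far to the right (the front of $Q^k[G]$ moves at speed at least $\rho^*$); invoke {\bf (A)} at that $y$ to approximate $T_{-y}\circ Q^m\circ T_y$ by $Q_+^m$ in $C$; and finally apply {\bf (UC)} to drive $Q_+^m$ uniformly up to $r^*$ on the cone $m[-c_-^*+\varepsilon,c_+^*-\varepsilon]$. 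This yields $Q^n[\varphi](\cdot,x)\ge r^*-\delta\check{1}$ on the claimed $x$-region. The matching upper bound $Q^n[\varphi](\cdot,x)\le r^*+\delta\check{1}$ is obtained from $Q^n[\varphi]\le Q^n[r^{**}_l]$ by monotonicity plus the analogous {\bf (A)}$+${\bf (UC)} sandwich applied to $r^{**}_l$, exploiting $Q_+^n[r^{**}_l]\to r^*$ on the cone by {\bf (UC)}.

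The main obstacle is the joint limit inside the lower bound. Because {\bf (A)} is only a local (compact-open) convergence on $C$, controlling $Q^n$ at positions $x=O(n)$ inside the cone requires carefully decoupling a \emph{finite} $Q$-spreading stage (pushing $G$ into the far right at a large but fixed $y$) from the long-time $Q_+$-spreading stage governed by {\bf (UC)}. The delicate point is preserving a uniform positive lower bound on the relocated sub-bump throughout the first stage — otherwise {\bf (UC)} cannot be applied with a size-independent rate — and this should rest on {\bf (SP)}, the interior condition $r^*\in Int(Y_+)$, and the continuity of $Q_+$. Once this uniformity is in hand, both (i) and (ii) follow by absorbing the finite thresholds either into $\alpha$ in the unscaled statement or into the scaling $x\in n[\,\cdot\,,c_+^*-\varepsilon]$ in the scaled statement.
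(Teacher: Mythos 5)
Your decomposition into a left annihilation region and a rightward spreading cone is the right architecture, and your argument on the left is correct and, notably, genuinely different from the paper's: you exploit {\bf (GUAA$_-$)}(i) to get the monotone domination $Q^n[\varphi]\le(\widehat{Q}_l)^n[r^{**}_l]$, note this majorant is non-increasing in $n$, and then read {\bf (GUAA$_-$)}(ii) directly (first the $n\to\infty$ decay of $(\widehat{Q}_l^+)^n[r^{**}_l](\cdot,0)$, then the $\liminf_{y\to-\infty}$ comparison, together with $T_y[r^{**}_l]=r^{**}_l$) to push $(\widehat{Q}_l)^{n_0}[r^{**}_l](\cdot,y)$ below any $\delta$ for $y\le -Y_0$. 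The paper instead reflects by $\mathcal S$ and cites Theorem~3.2 of \cite{yz2023}; your route is more self-contained and is a nice simplification on that side.

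The gap is on the rightward cone, and you partly acknowledge it yourself. Your sketch is: relocate a uniformly sized bump far to the right via {\bf (SP)}, invoke {\bf (A)} at that location to replace $Q$ by $Q_+$, then let {\bf (UC)} drive the bump up to $r^*$; and obtain the matching upper bound from $Q^n[\varphi]\le Q^n[r^{**}_l]$ plus the same {\bf (A)}+{\bf (UC)} sandwich. The difficulty you flag as a ``delicate point'' is in fact the entire content of what needs to be proved: {\bf (A)} is only a compact-open limit as $y\to\infty$ for each fixed number of iterates, {\bf (UC)} governs $Q_+^n$ uniformly on a cone whose width grows linearly in $n$, and {\bf (SP)} produces interior positivity but no size-independent lower bound on the relocated bump. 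Splicing these three into a uniform estimate over $\{(n,x): n\ge\alpha,\ x\in[\alpha, n(c_+^*-\varepsilon)]\}$ requires a genuine quantitative decoupling argument (propagating a fixed-size sub-solution by finitely many $Q$-steps, freezing it, and then comparing the remaining $Q$-iterates to $Q_+$-iterates with errors that are summable over the cone). You do not carry this out; you state that ``this should rest on {\bf (SP)}, the interior condition $r^*\in Int(Y_+)$, and the continuity of $Q_+$'' and that ``once this uniformity is in hand'' the result follows. That is precisely the hard step. The paper closes it by citing Lemma~2.3 and Theorem~3.1-(ii) (resp.\ Theorem~3.2-(ii) for part (ii)) of \cite{yz2025}, where this bookkeeping is done; without either invoking those results or reconstructing them, the rightward half of your proof is incomplete.
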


\begin{proof}
By applying \cite[Lemma 2.3 and  Theorem 3.1-(ii)]{yz2025}  with $c=0$ to $Q$, we have
 $$
 \lim\limits_{\alpha \rightarrow \infty}
\Big[\sup\{||Q^n[\varphi](\cdot, x)-r^*||:n\geq \alpha \mbox{ and } \alpha \leq x \leq n(c_+^*-\varepsilon)\}\Big]= 0, \quad  \forall  \varepsilon\in (0,c_+^*).
$$
On the other hand, by {\bf(GUAA$_-$)}, there exists $l_{0}\in \mathbb{N}$ such that $\varphi\in C_{r^{**}_{l_{0}}}$.
By \cite[Theorem 3.2] {yz2023}, as applied to $(\mathcal{S}\circ Q \circ\mathcal{S},\mathcal{S}\circ \widehat{Q}_{l_{0}}\circ\mathcal{S},\mathcal{S}\circ \widehat{Q}_{l_{0}}^+\circ\mathcal{S},r^{**}_{l_{0}})$,
it follows  that
$$
\lim\limits_{\alpha\rightarrow \infty}
\Big[\sup\{
||{Q}^n[\varphi](\cdot,x)||:x\in  (-\infty,-\alpha] \mbox{ and } n\ge \alpha \}\Big]=0.
$$
 This proves the statement (i).
Similarly, by using \cite[Theorem~3.2-(ii)]{yz2025} and the proof of (i),  we can easily prove
the statement (ii).
\end{proof}

For any given $(\varphi,\mu)\in L^{\infty}(M\times \mathbb{R},\mathbb{R}^{N})\times \mathbb{R}$, we define a function
$$
e_{\varphi,\mu}(\theta,x)=\varphi(\theta,x) e^{-\mu x}, \,  \forall (\theta,x)\in  M\times \mathbb{R}.
$$
Let   $ L,L_+$ and $L_-$ be monotone  linear operators on the space of functions
$$
\tilde{C}:=\left\{\sum\limits_{k=1}^l e_{\varphi_k,\mu_k}:\, l\geq 1,
(\varphi_k,\mu_k)\in L^{\infty}(M\times \mathbb{R},\mathbb{R}^{N})
\times \mathbb{R}, \,  \forall 1\leq k\leq l\right\}
$$
equipped with the pointwise ordering. Here and after, let us denote the set of all bounded maps from $M\times \mathbb{R}$ into $\mathbb{R}^{N}$ by $L^{\infty}(M\times \mathbb{R},\mathbb{R}^{N})$ with the pointwise ordering induced by $L^{\infty}(M\times \mathbb{R},\mathbb{R}^{N}_+)$.
For any  $\mu\in \mathbb{R}$ with $L_+[e_{\check{1},\mu}](\cdot,0)\in Y_+$, we  define $L_{+,\mu}:Y\to Y$ by
$$
L_{+,\mu}[\phi](\theta)=L_+[e_{\phi,\mu}](\theta,0),  \,   \forall \theta\in  M,\,
\phi\in  Y.
$$
Assume that $I_{L_+}:=\{\mu\in (0,\infty):L_+[e_{\check{1},\mu}](\cdot,0)\in Y_+\}\neq \emptyset$,
$$
T_{-z}\circ L_+ \circ T_z[e_{\zeta,\mu}]=L_+[e_{\zeta,\mu}], \quad \forall  (z,\mu,\zeta)\in \mathbb{R}\times I_{L_+}\times Y_+,
$$
and   there exists
$l_0\in \mathbb{N}$ such that
$(L_{+,\mu})^{l_0}$ is a compact and strongly positive operator on $Y$ for all  $\mu\in I_{L_+}$. It then follows that
$L_{+,\mu}$ has the principal eigenvalue $\lambda(\mu)$ and  a unique  strongly
positive eigenfunction $\zeta_\mu$ associated with  $\lambda(\mu)$ such that
$\inf\{\zeta_\mu^i(\theta):\theta\in M, 1\leq i\leq N
\}=1$.  Let
$${c^*_{L_+}}:=\inf\limits_{\mu\in I_{L_+}}\frac{1}{\mu}\log\lambda(\mu)$$
and define
$$
\underline{L}[\varphi;\alpha,\mu](\theta,x)=\min\{\alpha \zeta_\mu(\theta),L[\varphi](\theta,x)\},
 \,  \forall (\varphi,\theta,x)\in \tilde{C}\times M\times \mathbb{R},  \,   (\alpha,\mu)\in \mathbb{R}_+^2.
$$

Without the translational monotonicity assumption, the limit system on the {\bf (UC)} side is no longer the upper system of the original system. Therefore, the asymptotic annihilation condition {\bf (AA)} in~\cite{yz2020} for the limit system may not imply the asymptotic annihilation property of the original system. This motivates us to introduce  the following  assumptions on upper systems.
\begin{description}
\item [{\bf (LC)}]  For any $\epsilon>0$ and $\mu>0$, there exists $ \mathfrak{x}_{\epsilon,\mu}>0$ such that
$$
L[e_{\zeta_\mu,\mu}](\theta,x)\leq (1+\mu \epsilon) L_+[e_{\zeta_\mu,\mu}](\theta,x),  \,  \forall  \theta\in M,  \,  x\geq \mathfrak{x}_{\epsilon,\mu}.
$$

\item [{\bf (GLC)}]  There exists a sequence of linear operators $\{(L_k,L_{k,+})\}_{k\in \mathbb{N}}$   with  {\bf (LC)}
such that
$$
Q[\varphi]\leq L_k[\varphi],  \, \forall \varphi\in C_{\phi_k^*},  \, k\in \mathbb{N}.
$$
\end{description}

By the proof of   Theorem \ref{thm2.1} under {\bf(GUAA$_-$)} and  \cite[Theorem 3.3]{yz2025}, as applied to $(Q, \varphi)$, we have the following result on the asymptotic annihilation under  the  {\bf (GLC)} and {\bf(UAA)}  combination.

\begin{thm} \label{thm2.2} The following statements are valid:
\begin{itemize}
\item [{\rm (i)}] Let {\bf (GLC)}  and {\bf(GUAA$_-$)} hold, and  $\liminf\limits_{k\to \infty}c^*_{{L}_k}\geq 0$. If  $\varphi\in { C_{+}}$ and
$\varphi(\cdot,x)=0$  for all sufficiently  positive $x$, then $$\lim\limits_{n\rightarrow \infty}
\Big[\sup\{
||Q^n[\varphi](\cdot,x)||: x\geq  n(\liminf\limits_{k\to \infty}c^*_{{L}_k}+\varepsilon) \mbox{ or } x\leq - n\varepsilon\}\Big]=0,
\forall \varepsilon>0.$$

\item [{\rm (ii)}] Let  {\bf (GLC)}  and {\bf(GUAA$_-$)} hold,  and  $\liminf\limits_{k\to \infty}c^*_{{L}_k}<0$. If  $\varphi\in { C_{+}}$ and
$\varphi(\cdot,x)=0$ is for all sufficiently positive $x$, then $\lim\limits_{\alpha\rightarrow \infty}\Big[\sup\{||Q^n[\varphi](\cdot,x)||:|x|,n\geq \alpha \mbox{ with } n \in \mathbb{N} \}\Big]=0$ and $$\lim\limits_{n\rightarrow \infty}\Big[\sup\{
||Q^n[\varphi](\cdot,x)||:|x|\geq n\varepsilon \}\Big]=0,
\forall \varepsilon>0.$$
\end{itemize}
\end{thm}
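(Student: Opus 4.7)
The plan is to obtain annihilation of $Q^n[\varphi]$ on the right and the left half-lines separately, using {\bf (GLC)} and {\bf (GUAA$_-$)} respectively, and then to patch the two estimates together in each of the cases (i) and (ii).

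For the right tail, I would dominate $Q^n[\varphi]$ by iterates of a linear upper system. By {\bf (UB)} some $k_0$ satisfies $\varphi \in C_{\phi_{k_0}^*}$, and $Q$ preserves each $C_{\phi_k^*}$; {\bf (GLC)} then gives $Q^n[\varphi] \le L_k^n[\varphi]$ for every $k \ge k_0$. Since $\varphi$ has compact support on the right, \cite[Theorem~3.3]{yz2025} applied to $(Q, \varphi)$ should yield, for every $\varepsilon > 0$,
\[
\lim_{n \to \infty} \sup\bigl\{\|Q^n[\varphi](\cdot, x)\| : x \ge n(c^*_{L_k} + \varepsilon)\bigr\} = 0,
\]
and choosing $k$ so that $c^*_{L_k}$ lies within $\varepsilon/2$ of $\liminf_k c^*_{L_k}$ replaces $c^*_{L_k}$ by $\liminf_k c^*_{L_k}$ in the statement. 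For the left tail, I would transplant the reflection argument from the proof of Theorem~\ref{thm2.1}: pick $l_0$ with $\varphi \in C_{r^{**}_{l_0}}$ using the decomposition $C_+ = \bigcup_l C_{r^{**}_l}$ from {\bf (GUAA$_-$)}, and apply \cite[Theorem~3.2]{yz2023} to the conjugated quadruple $(\mathcal{S} \circ Q \circ \mathcal{S}, \mathcal{S} \circ \widehat{Q}_{l_0} \circ \mathcal{S}, \mathcal{S} \circ \widehat{Q}_{l_0}^+ \circ \mathcal{S}, r^{**}_{l_0})$ to obtain
\[
\lim_{\alpha \to \infty} \sup\bigl\{\|Q^n[\varphi](\cdot, x)\| : x \le -\alpha,\, n \ge \alpha\bigr\} = 0,
\]
which in particular implies $\sup\{\|Q^n[\varphi](\cdot, x)\| : x \le -n\varepsilon\} \to 0$ as $n\to\infty$ for every $\varepsilon > 0$.

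Combining the two tail estimates proves (i) at once. For (ii), the hypothesis $\liminf_k c^*_{L_k} < 0$ allows me to pick $\varepsilon$ and $k$ with $c^*_{L_k} + \varepsilon < 0$, so that the right-tail control covers the entire half-line $x \ge n(c^*_{L_k} + \varepsilon)$, which extends arbitrarily far into the negative axis as $n \to \infty$; together with the uniform left-tail bound, this is enough to dominate both target regions $\{|x|, n \ge \alpha\}$ and $\{|x| \ge n\varepsilon\}$. The main obstacle I anticipate is that the right-tail statement of \cite[Theorem~3.3]{yz2025} is phrased for a single linear upper system, while {\bf (GLC)} supplies a whole sequence $\{L_k\}$; this is finessed by fixing $k$ large enough in advance to accommodate both the containment $\varphi \in C_{\phi_k^*}$ and the desired proximity of $c^*_{L_k}$ to $\liminf_k c^*_{L_k}$, so that no passage to the limit in $k$ is required inside the quantitative estimate itself.
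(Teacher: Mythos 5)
Your proposal matches the paper's own argument: the left tail is handled exactly as in the proof of Theorem~\ref{thm2.1}, via the reflection $\mathcal{S}\circ Q\circ \mathcal{S}$ together with {\bf (GUAA$_-$)} and \cite[Theorem~3.2]{yz2023}, while the right tail is dominated by a linear upper system $L_{k_0}$ from {\bf (GLC)}, with $k_0$ chosen large enough that both $\varphi\in C_{\phi_{k_0}^*}$ and $c^*_{L_{k_0}}$ is within the desired tolerance of $\liminf_k c^*_{L_k}$, and then \cite[Theorem~3.3]{yz2025} is applied. The patching of the two tail estimates in cases (i) and (ii), including the observation that in case (ii) one can select $k_0$ and a small margin with $c^*_{L_{k_0}}+\delta<0$ so the right-tail region sweeps across the whole positive half-line, is also how the paper (implicitly) proceeds, and is spelled out more explicitly in the proof of Theorem~\ref{thm3.2}.
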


Recall that $\phi$ is a  nontrivial  fixed point  of the map $Q$
 if $\phi\in C_{+}\setminus \{0\}$  and
$Q[\phi]=\phi$. We say that $\phi(\theta,x)$ connects  $\xi \in Y_+$ to $\eta
\in Y_+$ provided that
\[
\phi(\cdot,-\infty):=\lim\limits_{s\to-\infty}\phi(\cdot,s)=\xi, \qquad   \text{and}\qquad  \phi(\cdot,\infty):=\lim\limits_{s\to\infty}\phi(\cdot,s)=\eta.
\]
The subsequent result is on  the existence of fixed points  under the bilateral {\bf(UC)} and {\bf(UAA)} combination.

\begin{thm} \label{thm2.3}
Assume that  $c_-^*>0$, {\bf (SP)},  {\bf (UC)}, {\bf (GUAA$_-$)} hold, and
  $\{Q^n [\phi_1^*]:n\in \mathbb{N}\}$ is precompact in $C$.  If $c_+^*>0$, then
 $Q$ has a  nontrivial  fixed point $W$ in $C_{\phi_1^*}$ connects $0$ to $r^*$. If $c_+^*\leq 0$, $T_{-y}\circ Q[\varphi]\geq Q\circ T_{-y}[\varphi]$ for all $(y,\varphi)\in \mathbb{R}_+\times C_{\phi_1^*}$, and if  there exists   $z:=z_l\in (-c_{+}^*,c_{-}^*)\cap \mathbb{R}_+$ such that $\{(T_z\circ Q)^n [\phi_1^*]:n\in \mathbb{N}\}$ are precompact in $C$, then  $Q$ has a  nontrivial  fixed point $W$ in $C_{\phi_1^*}$ connects $0$ to $r^*$.
\end{thm}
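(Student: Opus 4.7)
The plan is to construct $W$, in both cases, as the monotone decreasing limit $W := \lim_{n \to \infty} Q^n[\phi_1^*]$ in $C_{\phi_1^*}$. By {\bf (UB)} and monotonicity of $Q$, $\{Q^n[\phi_1^*]\}$ is nonincreasing, and the standing precompactness forces convergence in $C$; continuity of $Q$ then yields $Q[W] = W$. The remaining work is to identify the asymptotic values $W(\cdot,+\infty) = r^*$ and $W(\cdot,-\infty) = 0$, which in particular excludes the trivial case $W \equiv 0$.

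For the first case ($c_+^* > 0$), I would invoke Theorem~\ref{thm2.1}(i) with $\varphi = \phi_1^*$, since $\min\{c_-^*,c_+^*\} > 0$ now holds. That result gives uniform convergence of $Q^n[\phi_1^*](\cdot,x)$ to $r^* \cdot {\bf {1}}_{\mathbb{R}_+}(x)$ on $\mathcal{B}_{\alpha,\varepsilon}^+$; for each fixed $x$ with $|x|$ sufficiently large, letting $n \to \infty$ along indices with $(n,x) \in \mathcal{B}_{\alpha,\varepsilon}^+$ and using the pointwise convergence $Q^n[\phi_1^*] \to W$ yields $W(\cdot,x) \to r^*$ as $x \to +\infty$ and $W(\cdot,x) \to 0$ as $x \to -\infty$.

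For the second case ($c_+^* \leq 0$), Theorem~\ref{thm2.1} is no longer directly applicable, so I would introduce the shifted map $S := T_z \circ Q$ with $z \in (-c_+^*,c_-^*) \cap \mathbb{R}_+$. A routine check shows that $S$ inherits monotonicity, {\bf (TM)}, and the super-fixed-point property $S[\phi_1^*] \leq \phi_1^*$ (using that $Q[\phi_1^*]$ is nondecreasing in $x$ by {\bf (TM)} applied to the spatially constant $\phi_1^*$), together with the natural analogues of {\bf (SP)}, {\bf (UC)}, {\bf (GUAA$_-$)} in which the effective spreading speeds shift to $c_+^* + z > 0$ and $c_-^* - z > 0$. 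Re-running the first-case argument on $S$ (the precompactness of $\{(T_z \circ Q)^n[\phi_1^*]\}$ is supplied by hypothesis) produces a fixed point $\tilde W$ of $S$ connecting $0$ to $r^*$. A short induction using {\bf (TM)} and the monotonicity of each $Q^n[\phi_1^*]$ in $x$ yields $(T_z \circ Q)^n[\phi_1^*] \leq Q^n[\phi_1^*]$ for every $n$, hence $\tilde W \leq W$, which in particular gives $W(\cdot,+\infty) \geq r^*$.

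For the matching upper bound, I would use {\bf (A)} applied to the spatially constant $\phi_1^*$ to obtain $\lim_{y \to \infty} Q^n[\phi_1^*](\cdot, x+y) = Q_+^n[\phi_1^*](\cdot, x)$, which is itself spatially constant; then {\bf (UC)} evaluated at any point of the nonempty region $n[-c_-^*+\varepsilon, c_+^*-\varepsilon]$ (nonempty since $c_+^* + c_-^* > 0$) forces $Q_+^n[\phi_1^*] \to r^*$, and combining with the monotonicity of $Q^n[\phi_1^*]$ in $x$ yields $W \leq r^*$, hence $W(\cdot,+\infty) = r^*$. For $W(\cdot,-\infty) = 0$, I would pick $l$ with $\phi_1^* \leq r_l^{**}$ via {\bf (GUAA$_-$)}(i); from $W = Q^n[W] \leq \widehat Q_l^n[r_l^{**}]$ and {\bf (GUAA$_-$)}(ii) one extracts $\limsup_{y \to -\infty}\|W(\cdot,y)\| \leq \|(\widehat Q_l^+)^n[r_l^{**}](\cdot,0)\|$ for every $n$, and sending $n \to \infty$ with $\|(\widehat Q_l^+)^n[r_l^{**}](\cdot,0)\| \to 0$ closes the argument. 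The hard part will be the bookkeeping in the second case, specifically verifying that $S = T_z \circ Q$ genuinely inherits {\bf (SP)}, {\bf (UC)}, and {\bf (GUAA$_-$)} in the form required to rerun the first-case argument, so that the effective spreading speeds become $c_\pm^* \mp z$ and the {\bf (GUAA$_-$)}-type annihilation at $-\infty$ is not disrupted by the shift.
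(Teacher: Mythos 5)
Your construction $W = \lim_{n\to\infty} Q^n[\phi_1^*]$ and the case $c_+^* > 0$ are handled correctly, and your overall two-case strategy mirrors what the paper does (the paper simply outsources the existence of a fixed point with $W(\cdot,\infty) = r^*$ to \cite[Proposition~3.1]{yz2025}, then uses the \textbf{(GUAA$_-$)} part of the proof of Theorem~\ref{thm2.1} for $W(\cdot,-\infty)=0$, exactly as you do). The comparison step $(T_z\circ Q)^n[\phi_1^*] \le Q^n[\phi_1^*]$ via \textbf{(TM)} and the monotonicity of $Q^n[\phi_1^*]$ in $x$ is correct, as is the upper bound $W \le r^*$ obtained from \textbf{(A)} and \textbf{(UC)} applied to the spatially constant $\phi_1^*$.

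The genuine gap is in the step you flag as ``routine bookkeeping'': showing that $S := T_z\circ Q$ satisfies \textbf{(A)} (and hence \textbf{(UC)}) with limiting map $T_z\circ Q_+$. This does not follow mechanically from \textbf{(A)} for $Q$, because $T_z$ and $Q$ do not commute, so $(T_z Q)^n \neq T_{nz}Q^n$, and one only gets $T_{-y}\circ S^n\circ T_y = \bigl(T_z\circ (T_{-y}\circ Q\circ T_y)\bigr)^n = T_{nz}\circ R_{y+(n-1)z}\circ\cdots\circ R_{y}$ with $R_{y'} := T_{-y'}\circ Q\circ T_{y'}$ evaluated at \emph{different} shifts $y'$. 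The hypothesis \textbf{(A)} only gives $R_y^n[\varphi]\to Q_+^n[\varphi]$ for each fixed $\varphi$ and a \emph{single} $y$, not the joint convergence of a product of $R_{y'}$ at staggered $y'$ applied to $y$-dependent arguments; absent a stronger uniform version of \textbf{(A)} (such as the joint convergence stated only for the concrete example in Proposition~4.3(iii)), this step is not secure. A related subtlety: the inequalities $T_{nz}Q^n[\phi_1^*]\le (T_zQ)^n[\phi_1^*]\le Q^n[\phi_1^*]$ do identify $(T_zQ)^n[\phi_1^*](\cdot,+\infty)=Q_+^n[\phi_1^*]$ at each fixed $n$, but you cannot then exchange $x\to\infty$ with $n\to\infty$ for free (a double array nondecreasing in $x$ and nonincreasing in $n$ need not have commuting iterated limits), which is precisely why the uniform statement of Theorem~\ref{thm2.1}-(i) (over growing space-time windows) is needed rather than the two iterated pointwise limits. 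Finally, transferring \textbf{(GUAA$_-$)} to $S$ requires the comparison maps $\widehat{Q}_l$ to themselves satisfy \textbf{(TM)}, which is not part of \textbf{(GUAA$_-$)}. The paper avoids all of this by invoking a result from \cite{yz2025} that is stated with a built-in shift parameter $c$ and is designed to cover exactly the iterates $T_{-ct}\circ Q^n$; you should either cite that result or supply the missing uniform convergence argument rather than treat it as bookkeeping.
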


\begin{proof}
By applying  \cite[Proposition 3.1]{yz2025} to $Q$, we see that $Q$ has a  nontrivial  fixed point $W$ in $C_{\phi_1^*}$ such that $W(\cdot,\infty)=r^*$.  Thus, by using the proof of  Theorem~\ref{thm2.1} under {\bf(GUAA$_-$)}, we further obtain $W(\cdot,-\infty)=0$.
\end{proof}

Note that in  \cite{yz2020} we did not address the non-existence of the nontrivial fixed points. To pride a general result in this direction, we  assume that  the linear operator  $\underline{\mathcal{L}_+}$ on $\tilde{C}$ preserves  the pointwise ordering and define $\underline{\mathcal{L}_{+,\mu}}:Y\to Y$
by $\underline{\mathcal{L}_{+,\mu}}[\phi]=\underline{\mathcal{L}_+}[e_{\phi,-\mu}](\cdot,0)$ with $\underline{\mathcal{L}_{+,\mu}}$ being  continuous and positive, whenever  $\mu\in I^-_{\underline{\mathcal{L}_+}}:=\{\mu>0:\underline{\mathcal{L}_+}[e_{\check{1},-\mu}](\cdot,0)\in Y_+\}$.  Assume that $(\underline{\mathcal{L}_{+,\mu}})^{l_0}$ is a compact and strongly positive operator on $Y$ for some $l_0\in \mathbb{N}$ and  all $\mu\in I^-_{\underline{\mathcal{L}_+}}$. It  then follows that $\underline{\mathcal{L}_{+,\mu}}$ has the principal eigenvalue $\underline{\lambda}(\mu)$ and  a unique strongly
positive eigenfunction $\underline{\zeta_\mu}$ associated with  $\underline{\lambda}(\mu)$ such that $\inf\{\underline{\zeta_\mu}^i(\theta):\theta\in M,  i\in [1,N]\cap\mathbb{N}\}=1$.

\begin{lemma}   \label{lemma2.1}
 Assume that $ T_{-y}\circ \underline{\mathcal{L}_{+}}[e_{\zeta,-\mu}]= \underline{\mathcal{L}_{+}}\circ T_{-y}[e_{\zeta,-\mu}]$ for all $(y,\mu,\zeta)\in \mathbb{R}_+^2\times Y_+$ with $\underline{\mathcal{L}_+}[e_{\check{1},-\mu}](\cdot,0)\in Y_+$.  If ${c^*_{-}(\underline{\mathcal{L}_{+}})}:=\inf\limits_{\mu\in I^-_{\underline{\mathcal{L}_+}}}\frac{1}{\mu}\log\underline{\lambda}(\mu)<0$, then for any $\varepsilon\in\left(0,-c^*_-(\underline{\mathcal{L}_{+}})\right)$, there exists a positive number $\mu:=\mu_\varepsilon$ such that
	\begin{itemize}
		\item[(i)] $\underline{\lambda}(\mu)<e^{(c^*_-(\underline{\mathcal{L}_{+}})+\varepsilon)\mu}<1$.
				
		\item[(ii)] $\underline{\mathcal{L}_+}[e_{\underline{\zeta_{\mu}},-\mu}]\leq e^{(c^*_-(\underline{\mathcal{L}_{+}})+\varepsilon)\mu}e_{\underline{\zeta_{\mu}},-\mu}$, and hence, $$\underline{\mathcal{L}_+}^n[e_{\underline{\zeta_{\mu}},-\mu}](\theta,x)\leq e^{[x+n(c^*_-(\underline{\mathcal{L}_{+}})+\varepsilon)]\mu}\underline{\zeta_{\mu}}(\theta), \,  \forall
		 (n,\theta,x)\in  \mathbb{N} \times M\times \mathbb{R}.
		$$
	\end{itemize}
\end{lemma}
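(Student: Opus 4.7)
The plan is to read off part (i) directly from the infimum definition and, for part (ii), to upgrade the eigenvalue identity from its defining point $x=0$ to a pointwise identity on all of $M\times\mathbb{R}$ via the translation-covariance hypothesis; once that is done, the iterated bound is a routine induction using linearity.

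For part (i), since $c^*_-(\underline{\mathcal{L}_+})+\varepsilon<0$, the infimum characterization $c^*_-(\underline{\mathcal{L}_+})=\inf_{\mu\in I^-_{\underline{\mathcal{L}_+}}}\frac{1}{\mu}\log\underline{\lambda}(\mu)$ yields some $\mu=\mu_\varepsilon\in I^-_{\underline{\mathcal{L}_+}}$ with $\frac{1}{\mu}\log\underline{\lambda}(\mu)<c^*_-(\underline{\mathcal{L}_+})+\varepsilon$; since $\mu>0$, exponentiating gives $\underline{\lambda}(\mu)<e^{(c^*_-(\underline{\mathcal{L}_+})+\varepsilon)\mu}<1$.

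For part (ii), I would first restate the eigenvalue equation $\underline{\mathcal{L}_{+,\mu}}[\underline{\zeta_\mu}]=\underline{\lambda}(\mu)\underline{\zeta_\mu}$ as $\underline{\mathcal{L}_+}[e_{\underline{\zeta_\mu},-\mu}](\cdot,0)=\underline{\lambda}(\mu)\underline{\zeta_\mu}$. Next, computing $T_{-y}[e_{\underline{\zeta_\mu},-\mu}](\theta,x)=\underline{\zeta_\mu}(\theta)e^{\mu(x+y)}=e^{\mu y}e_{\underline{\zeta_\mu},-\mu}(\theta,x)$ and combining the translation-covariance hypothesis with linearity of $\underline{\mathcal{L}_+}$ produces
\[
\underline{\mathcal{L}_+}[e_{\underline{\zeta_\mu},-\mu}](\theta,x+y)=e^{\mu y}\,\underline{\mathcal{L}_+}[e_{\underline{\zeta_\mu},-\mu}](\theta,x), \qquad x\in\mathbb{R},\ y\geq 0.
\]
Taking $x=0$ yields the formula $\underline{\mathcal{L}_+}[e_{\underline{\zeta_\mu},-\mu}](\theta,y)=\underline{\lambda}(\mu)e_{\underline{\zeta_\mu},-\mu}(\theta,y)$ for $y\geq 0$, and substituting $x=-y$ (and then solving for the value at $-y$) extends it to $(-\infty,0)$. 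Together, $\underline{\mathcal{L}_+}[e_{\underline{\zeta_\mu},-\mu}]=\underline{\lambda}(\mu)\,e_{\underline{\zeta_\mu},-\mu}$ pointwise on $M\times\mathbb{R}$, and part (i) upgrades this equality into the first inequality in (ii).

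The iterated bound then follows by a direct induction on $n$: applying $\underline{\mathcal{L}_+}$ to the identity just obtained and using linearity yields $\underline{\mathcal{L}_+}^n[e_{\underline{\zeta_\mu},-\mu}]=\underline{\lambda}(\mu)^n e_{\underline{\zeta_\mu},-\mu}$, and the stated pointwise estimate is immediate from $\underline{\lambda}(\mu)^n\leq e^{n(c^*_-(\underline{\mathcal{L}_+})+\varepsilon)\mu}$ together with $e_{\underline{\zeta_\mu},-\mu}(\theta,x)=\underline{\zeta_\mu}(\theta)e^{\mu x}$. The only (minor) technical point is that the translation-covariance is hypothesized for $y\in\mathbb{R}_+$ only, so extending the identity to $x<0$ requires the $x=-y$ substitution above; beyond that, the argument is a transparent linear-exponential calculation and I do not anticipate any further obstacle.
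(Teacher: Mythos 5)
Your proof is correct and follows essentially the same route as the paper: part (i) by unwinding the infimum, part (ii) by promoting the eigenvalue identity at $x=0$ to a global identity via the translation-covariance of $\underline{\mathcal{L}_+}$ on exponential profiles. In fact your write-up is slightly more explicit than the paper's, which compresses the $x\in\mathbb{R}$ extension (including the observation that $y\ge 0$ plus the $x=-y$ substitution together cover both signs) into the phrase ``in view of the conditions of $\underline{\mathcal{L}_+}$ and the definition of $e_{\underline{\zeta_\mu},-\mu}$.''
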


\begin{proof}
(i) By the definition of $c^*_-(\underline{\mathcal{L}_{+}})$, there exists $\mu:=\mu_\varepsilon >0$ such that $\underline{\lambda}(\mu)<e^{(c^*_-(\underline{\mathcal{L}_{+}})+\varepsilon)\mu}<1$.

(ii) By the definitions of $\underline{\mathcal{L}_{+,\mu}}$ and $\underline{\zeta_{\mu}}$, we have $\underline{\mathcal{L}_{+,\mu}}[\underline{\zeta_{\mu}}]=\underline{\lambda}(\mu)\underline{\zeta_{\mu}}$, and hence, $\underline{\mathcal{L}_+}[e_{\underline{\zeta_{\mu}},-\mu}](\cdot,0)=\underline{\lambda}(\mu)\underline{\zeta_{\mu}}$.  In view of the conditions of $\underline{\mathcal{L}_+}$ and the definition of $e_{\underline{\zeta_{\mu}},-\mu}$,  we see that
$$
\underline{\mathcal{L}_+}[e_{\underline{\zeta_{\mu}},-\mu}]=\underline{\lambda}(\mu)e_{\underline{\zeta_{\mu}},-\mu}\leq e^{(c^*_-(\underline{\mathcal{L}_{+}})+\varepsilon)\mu} e_{\underline{\zeta_{\mu}},-\mu}.
$$
This completes the proof.
\end{proof}

\begin{thm} \label{thm2.4}
Assume that $\underline{\mathcal{L}_+}$ satisfies all conditions in Lemma~\ref{lemma2.1} and  $\underline{\mathcal{L}_+}[\varphi]\geq Q[\varphi]$ for all $\varphi\in C_{\phi_1^*}$.
Let $\mu_\varepsilon$ be defined as in Lemma~\ref{lemma2.1}.
 If $W\in  C_{\phi_1^*}$ with $Q[W]=W$ and $\limsup\limits_{x\to -\infty}\Big[||W(\cdot,x)||e^{-\mu_\varepsilon x}\Big]<\infty$ for some  $\varepsilon\in\left(0,-c^*_-(\underline{\mathcal{L}_{+}})\right)$, then $W=0$.
In other words, $Q$ has no  nontrivial  fixed point $W$ in $C_{\phi_1^*}$ with
 $\limsup\limits_{x\to -\infty}\Big[||W(\cdot,x)||e^{-\mu_\varepsilon x}\Big]<\infty$ for some  $\varepsilon\in\left(0,-c^*_-(\underline{\mathcal{L}_{+}})\right)$.
\end{thm}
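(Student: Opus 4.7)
The plan is to build a linear exponential majorant for $W$ from the assumed decay rate, then iterate the comparison with $\underline{\mathcal{L}_+}$ and exploit the supercritical decay provided by Lemma~\ref{lemma2.1}(ii).

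First, I would fix $\varepsilon\in(0,-c^*_-(\underline{\mathcal{L}_{+}}))$ and let $\mu=\mu_\varepsilon$ be as in Lemma~\ref{lemma2.1}. From the hypothesis $\limsup_{x\to -\infty}[\|W(\cdot,x)\|e^{-\mu x}]<\infty$, there exist $K_1>0$ and $x_0\in\mathbb{R}$ with $\|W(\cdot,x)\|\le K_1 e^{\mu x}$ for all $x\le x_0$. On the complementary region $x\ge x_0$, the global bound $W\le\phi_1^*\in\mathrm{Int}(Y_+)$ together with $e^{\mu x}\ge e^{\mu x_0}$ provides a constant $K_2$ so that $W(\theta,x)\le K_2 e^{\mu x}\check{1}$. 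Using $\underline{\zeta_{\mu}}(\theta)\ge \check{1}$ coordinatewise, I can combine these into a single pointwise majorant
\[
W(\theta,x)\le K\,\underline{\zeta_{\mu}}(\theta)\,e^{\mu x}=K\,e_{\underline{\zeta_{\mu}},-\mu}(\theta,x),\qquad\forall\,(\theta,x)\in M\times\mathbb{R},
\]
for some $K>0$.

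Next, because $W=Q[W]$ and $\underline{\mathcal{L}_+}\ge Q$ on $C_{\phi_1^*}$, I get $W\le \underline{\mathcal{L}_+}[W]$. Monotonicity of $\underline{\mathcal{L}_+}$ then propagates this to $W\le \underline{\mathcal{L}_+}^{n}[W]$ for every $n\in\mathbb{N}$, and the majorant established above, combined with linearity and positivity of $\underline{\mathcal{L}_+}$, yields
\[
W(\theta,x)\le K\,\underline{\mathcal{L}_+}^{n}[e_{\underline{\zeta_{\mu}},-\mu}](\theta,x).
\]
Invoking Lemma~\ref{lemma2.1}(ii) bounds the right-hand side by $K\,e^{[x+n(c^*_-(\underline{\mathcal{L}_{+}})+\varepsilon)]\mu}\,\underline{\zeta_{\mu}}(\theta)$. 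Since $c^*_-(\underline{\mathcal{L}_{+}})+\varepsilon<0$, letting $n\to\infty$ drives this to $0$ at every $(\theta,x)$, so $W\le 0$. Together with $W\in C_+$, this forces $W\equiv 0$, which is the desired contradiction/conclusion.

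The main technical point I expect to require care is the construction of the global majorant: the hypothesis controls $W$ only near $-\infty$, so the middle and right regions must be absorbed using the uniform bound $W\le\phi_1^*$ and the normalization $\inf_\theta\underline{\zeta_{\mu}}^{i}(\theta)=1$. A secondary subtlety is domain-theoretic: $\underline{\mathcal{L}_+}$ is defined on $\tilde C$ while $W\in C_{\phi_1^*}$, so one needs the sandwich $W\le K\,e_{\underline{\zeta_{\mu}},-\mu}$ to legitimize the iteration $W\le \underline{\mathcal{L}_+}^{n}[W]\le K\,\underline{\mathcal{L}_+}^{n}[e_{\underline{\zeta_{\mu}},-\mu}]$ by pointwise monotonicity, rather than requiring $\underline{\mathcal{L}_+}$ to be defined directly on $W$.
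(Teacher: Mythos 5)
Your proof is correct and follows essentially the same route as the paper's: construct a global exponential majorant $W(\theta,x)\le K\,e_{\underline{\zeta_{\mu_\varepsilon}},-\mu_\varepsilon}(\theta,x)$ from the decay hypothesis near $-\infty$ and the uniform bound $W\le\phi_1^*$, then iterate $W=Q^n[W]\le\underline{\mathcal{L}_+}^n[W]\le K\,\underline{\mathcal{L}_+}^n[e_{\underline{\zeta_{\mu_\varepsilon}},-\mu_\varepsilon}]$ and apply Lemma~\ref{lemma2.1}(ii) to drive the bound to zero as $n\to\infty$. The paper chooses the explicit constant $A_1=\max\{A_0,\|W\|_{L^\infty}e^{\mu_\varepsilon A_0}\}$ rather than using $\phi_1^*$, but this is cosmetic.
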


\begin{proof} Since  $\limsup\limits_{x\to -\infty}\max\limits_{\theta\in M}\Big[W(\theta,x)e^{-\mu_\varepsilon x}\Big]<\infty$, there exists $A_0>0$ such that  $W(\theta,x)\leq A_0 e^{\mu_\varepsilon x}\underline{\zeta_{\mu_\varepsilon}}(\theta)$ for all $(\theta,x)\in M\times (-\infty,-A_0]$. This, together with $\mu_\varepsilon>0$ and the boundedness of $W$, implies that $W(\theta,x)\leq A_1 e^{\mu_\varepsilon x}\underline{\zeta_{\mu}}(\theta)$ for all $(\theta,x)\in M\times \mathbb{R}$ with $A_1=\max\{A_0,||W||_{L^\infty(M\times \mathbb{R},\mathbb{R})}e^{\mu_\varepsilon A_0}\}$.
It follows from Lemma~\ref{lemma2.1} that
$$W(\theta,x)=Q^n[W](\theta,x)\leq \underline{\mathcal{L}_+}^n[W](\theta,x)\leq \underline{\mathcal{L}_+}^n[A_1e_{\underline{\zeta_{\mu_\varepsilon}},-\mu_\varepsilon}](\theta,x)\leq A_1e^{[x+n(c^*_-(\underline{\mathcal{L}_{+}})+\varepsilon)]\mu_\varepsilon}\underline{\zeta_{\mu_\varepsilon}}(\theta)$$
for all $(\theta,x)\in M\times \mathbb{R}$ and $n\in \mathbb{N}$, and hence, $W=0$.
\end{proof}

It is worthy to point out that when $c_-^*<0$, generally speaking, it is uncertain whether there is a nontrivial fixed point, and the existence of nontrivial fixed points may depend on both intermediate and limiting systems.
The example in Proposition~\ref{prop4.4} shows that even if $c_-^*<0$ and $Q$ satisfies  {\bf (UAA)} at $-\infty$, the intermediate system can give rise to nontrivial fixed points.

To present our last result in this section,  we need the following assumption on the  global asymptotic stability of a positive fixed point.
\begin{enumerate}
\item [{\bf(GAS)}] There exists $W\in C_+$ such that $\lim\limits_{n\to \infty}Q^n[\varphi]= W$ in $C$  for all $\varphi\in C_{+}\setminus\{0\}$.
\end{enumerate}

\begin{thm} \label{thm2.5}
Let {\bf (GAS)} hold for the map $Q$.  If  {\bf (SP)}, {\bf (UC)}, {\bf (GUAA$_-$)}  hold and  $\min\{c_-^*,c_+^*\}>0$, then for any
$\varphi\in  C_{+}\setminus\{0\}$, there holds
$$
\lim\limits_{n \rightarrow \infty}
\Big[\sup\{||Q^n[\varphi](\cdot, x)-W(\cdot,x)||:  x\leq n(c_+^*-\varepsilon)\}\Big]= 0
$$
for all $\varepsilon\in (0,c_+^*)$.
\end{thm}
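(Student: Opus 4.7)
The plan is to combine the global attractivity from \textbf{(GAS)} (which is local-uniform convergence on compact $x$-intervals, since that is what convergence in the $C$-topology amounts to) with the uniform spreading estimates from Theorem~\ref{thm2.1}(i), by splitting the region $\{x\le n(c_+^*-\varepsilon)\}$ into a bounded middle piece (where \textbf{(GAS)} rules) and two unbounded pieces (where Theorem~\ref{thm2.1}(i) sandwiches $Q^n[\varphi]$ against the known limits of $W$).

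First I would identify the boundary behaviour of $W$. Because $W=\lim Q^n[\varphi]$ in $C$ for any fixed $\varphi\in C_{+}\setminus\{0\}$, for each fixed $x$ one has $Q^n[\varphi](\cdot,x)\to W(\cdot,x)$ uniformly in $\theta\in M$. On the other hand, Theorem~\ref{thm2.1}(i) provides, for a given $\delta>0$, some $\alpha_0>0$ such that $\|Q^n[\varphi](\cdot,x)-r^*\cdot\mathbf{1}_{\mathbb{R}_+}(x)\|<\delta/4$ whenever $n\ge\alpha_0$ and $(n,x)\in\mathcal{B}_{\alpha_0,\varepsilon}^+$. Fixing $x\le -\alpha_0$ and letting $n\to\infty$ gives $\|W(\cdot,x)\|\le\delta/4$; fixing $x\ge\alpha_0$ and letting $n\to\infty$ (which is legitimate since $x\le n(c_+^*-\varepsilon)$ for all sufficiently large $n$) gives $\|W(\cdot,x)-r^*\|\le\delta/4$. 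Consequently $W(\cdot,-\infty)=0$ and $W(\cdot,\infty)=r^*$, and the threshold $\alpha_0$ controls both $W$ and the iterates $Q^n[\varphi]$ on the respective tails.

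Next I would decompose $\{x\le n(c_+^*-\varepsilon)\}$ as
\[
 (-\infty,-\alpha_0]\;\cup\;[-\alpha_0,\alpha_0]\;\cup\;[\alpha_0,n(c_+^*-\varepsilon)]
\]
and estimate $\|Q^n[\varphi](\cdot,x)-W(\cdot,x)\|$ on each piece for $n\ge\alpha_0$. On the left tail, the triangle inequality gives $\|Q^n[\varphi](\cdot,x)-W(\cdot,x)\|\le \|Q^n[\varphi](\cdot,x)\|+\|W(\cdot,x)\|<\delta/2$ using the bound on $W$ from Step~1 and Theorem~\ref{thm2.1}(i). On the right tail, the triangle inequality with $r^*$ as the common reference likewise yields $\|Q^n[\varphi](\cdot,x)-W(\cdot,x)\|\le \|Q^n[\varphi](\cdot,x)-r^*\|+\|r^*-W(\cdot,x)\|<\delta/2$, since both summands are at most $\delta/4$ by Theorem~\ref{thm2.1}(i) and Step~1. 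For the middle compact piece $|x|\le\alpha_0$, the convergence $Q^n[\varphi]\to W$ in $C$ unpacks to $\sup_{\theta\in M,\,|x|\le\alpha_0}\|Q^n[\varphi](\theta,x)-W(\theta,x)\|_{\mathbb{R}^N}\to 0$ by the explicit form of $\|\cdot\|_X$, so there exists $n_0\ge\alpha_0$ with $\sup_{|x|\le\alpha_0}\|Q^n[\varphi](\cdot,x)-W(\cdot,x)\|<\delta/2$ for $n\ge n_0$. Taking the maximum over the three pieces yields the desired uniform estimate for $n\ge n_0$.

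The only conceptual step requiring care is Step~1, namely extracting the boundary values of $W$: one has to interchange the limit in $n$ (from \textbf{(GAS)}) with the uniform-in-$n$ tail estimate from Theorem~\ref{thm2.1}(i), and in particular ensure that on the right tail the point $x\ge\alpha_0$ actually lies in $\mathcal{B}_{\alpha_0,\varepsilon}^+$ for all sufficiently large $n$, which it does because $c_+^*-\varepsilon>0$ forces $x\le n(c_+^*-\varepsilon)$ eventually. Everything else is a sandwich argument and no monotonicity or translational hypothesis beyond what Theorem~\ref{thm2.1}(i) already exploits is needed.
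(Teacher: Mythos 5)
Your proof is correct and follows essentially the same route as the paper's: use Theorem~\ref{thm2.1}(i) to control both $Q^n[\varphi]$ and $W$ on the tails $(-\infty,-\alpha_0]\cup[\alpha_0,n(c_+^*-\varepsilon)]$, use \textbf{(GAS)} on the compact middle $[-\alpha_0,\alpha_0]$, and combine by the triangle inequality. The only difference is presentational — you spell out in Step~1 how the tail estimate for $W$ itself is extracted by passing $n\to\infty$ in Theorem~\ref{thm2.1}(i) via \textbf{(GAS)}, a point the paper states without elaboration.
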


\noindent
\begin{proof}  Given any $\gamma>0$ and $\varepsilon\in (0,c_+^*)$.  In view of Theorem~\ref{thm2.1}-(i),  there exists $\alpha_0>0$ such that
$||Q^n[\varphi](\cdot, x)-r^*\cdot {\bf {1}}_{\mathbb{R}_+}(x)||<\frac{\gamma}{3}$ and $||W(\cdot, x)-r^*\cdot {\bf {1}}_{\mathbb{R}_+}(x)||<\frac{\gamma}{3}$ for all
$(n,x)\in \mathcal{B}_{\alpha_0,\varepsilon}^+$. Thus, $||Q^n[\varphi](\cdot, x)-W(\cdot, x)||<\frac{2\gamma}{3}$ for all
$(n,x)\in \mathcal{B}_{\alpha_0,\varepsilon}^+$. It follows from {\bf (GAS)} that there exists $N_0>0$ such that $||Q^n[\varphi](\cdot, x)-W(\cdot, x)||<\frac{\gamma}{3}$ for all $x\in [-\alpha_0,\alpha_0]$ and $n>N_0$. These, together with the choices of $\mathcal{B}_{\alpha_0,\varepsilon}^+$, $\alpha_0$, and $N_0$, imply that $||Q^n[\varphi](\cdot, x)-W(\cdot, x)||<\gamma$ for all $x\in (-\infty,n(c_+^*-\varepsilon)]$ and $n>\max\{N_0,\alpha_0\}$.   \end{proof}

\section {Continuous-time  systems } \label{3sec}

In this section, we study the upward convergence, asymptotic annihilation,
equilibria  and traveling waves for continuous-time  semiflows and nonautonomous systems, respectively,  under the  assumption of  {\bf(UC)}  and {\bf(UAA)} combination.

\subsection{Continuous-time semiflows}
A map $Q:\mathbb{R}_{+}\times{C}_+\to{C}_+$ is said to be a
continuous-time semiflow on $C_+$ if  for any given vector $r\in Int(\mathbb{R}^N_+)$,
$Q|_{\mathbb{R}_{+}\times C_r}: \mathbb{R}_{+}\times{C}_r\to{C}_+$
is continuous,  $Q_0=Id|_{{C}_+}$,  and $Q_{t}\circ
Q_s=Q_{t+s}$ for all $t,s\in\mathbb{R}_+$, where $Q_t:=
Q(t,\cdot)$ for all $t\in\mathbb{R}_+$.

\begin{thm} \label{thm3.1} Let $\{Q_t\}_{t\geq 0}$ be a continuous-time semiflow on $C_+$ and
let $\varphi\in C_{+}\setminus\{0\}$. Then the following statements are valid:
\begin{itemize}
\item [{\rm (i)}] If  $Q_{t_0}$ satisfies all assumptions of Theorem~\ref{thm2.1}-(i) for some $t_0>0$,
then
$$\lim\limits_{\alpha \rightarrow \infty}
\Big[\sup\{||Q_t[\varphi](\cdot, x)-r^*\cdot {\bf {1}}_{\mathbb{R}_+}(x)||:(t,x)\in \mathcal{C}_{\alpha,\varepsilon}^+\}\Big]= 0,
$$
where $\varepsilon\in (0,\frac{c_+^*}{t_0})$ and $\mathcal{C}_{\alpha,\varepsilon}^+=\{(t,x)\in \mathbb{R}_+\times \mathbb{R}:t\geq \alpha t_0\mbox{ and } x \in [\alpha, t(\frac{c_+^*}{t_0}-\varepsilon)] \bigcup (-\infty,-\alpha]\}$.

\item [{\rm (ii)}] If  $Q_{t_0}$ satisfies all the assumptions of Theorem~\ref{thm2.1}-(ii) for some $t_0>0$,
then
$$\lim\limits_{t \rightarrow \infty}
\Big[\sup\{||Q_t[\varphi](\cdot, x)-r^*_+\cdot {\bf {1}}_{\mathbb{R}_+}(x)||:x\in t\mathcal{C}_{\varepsilon}^+\}\Big]= 0,
$$
 where $\varepsilon\in (0,\frac{1}{2t_{0}}\min\{c_+^*,c_+^*+c_-^*\})$ and $\mathcal{C}_{\varepsilon}^+=[\max\{\varepsilon,-\frac{c_-^*}{t_{0}}+\varepsilon\}, \frac{c_+^*}{t_{0}}-\varepsilon] \bigcup (-\infty,-\varepsilon]$.
\end{itemize}
\end{thm}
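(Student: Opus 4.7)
The plan is to reduce Theorem~\ref{thm3.1} to its discrete predecessor Theorem~\ref{thm2.1} applied to the time-$t_0$ map $Q_{t_0}$, and then bridge the gap between discrete and continuous times using the semigroup identity $Q_t = Q_s \circ Q^n_{t_0}$ for $t = n t_0 + s$, $n = \lfloor t/t_0 \rfloor$ and $s \in [0, t_0)$. I focus on (i), since (ii) follows by the parallel argument with Theorem~\ref{thm2.1}(ii) in place of (i).

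The first step is to fix a tolerance $\gamma > 0$ and an auxiliary spreading parameter $\tilde{\varepsilon} \in (0, \varepsilon t_0)$, and to apply Theorem~\ref{thm2.1}(i) to $Q_{t_0}$. This produces $\alpha_0$ such that $\|Q^n_{t_0}[\varphi](\cdot, y) - r^* \cdot \mathbf{1}_{\mathbb{R}_+}(y)\|$ is smaller than $\gamma/2$ uniformly on $\mathcal{B}^+_{\alpha_0, \tilde{\varepsilon}}$. A direct calculation showing $t(c^*_+/t_0 - \varepsilon) \leq (n+1)(c^*_+ - \tilde{\varepsilon})$ for $t \in [n t_0, (n+1) t_0]$ and $n$ large (valid whenever $\tilde{\varepsilon} < \varepsilon t_0$) then confirms that for $\alpha$ sufficiently large every $(t, x) \in \mathcal{C}^+_{\alpha, \varepsilon}$ with $t \in [n t_0, (n+1) t_0]$ satisfies either $x \in [\alpha_0, (n+1)(c^*_+ - \tilde{\varepsilon})]$ or $x \leq -\alpha_0$. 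Hence the discrete estimate at index $n+1$ already controls $Q^{n+1}_{t_0}[\varphi](\cdot, x) = Q_{t_0 - s} \circ Q_t[\varphi](\cdot, x)$, and the estimate at index $n$ controls $Q^n_{t_0}[\varphi](\cdot, x)$, at precisely the spatial points appearing in the continuous-time window.

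The second step is to transfer these two-sided discrete bounds to the intermediate-time value $Q_t[\varphi] = Q_s \circ Q^n_{t_0}[\varphi]$. My approach is to invoke monotonicity of $Q_s$, the invariance of the bands $C_{\phi^*_k}$ guaranteed by {\bf (UB)} (so that $\varphi$ remains trapped for all $s \in [0, t_0]$), and the approximate fixity of the constants $r^*$ at the right tail and $0$ at the left tail, the former inherited through assumption (A). Sandwiching $Q^n_{t_0}[\varphi]$ between a supersolution close to $r^*$ on the right and a subsolution close to $0$ on the left, and then propagating these envelopes by $Q_s$ for $s \in [0, t_0]$, delivers the required uniform-in-$x$ bound on $Q_t[\varphi] - r^* \cdot \mathbf{1}_{\mathbb{R}_+}$ on the continuous window.

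The main obstacle will be promoting local (compact-open) continuity of the semiflow to uniformity in $x$ over a window whose length grows linearly with $t$. The norm on $C$ induces only local uniform convergence, so the transfer cannot proceed by continuity of $s \mapsto Q_s$ alone; the uniform-in-$x$ control must come from monotonicity together with the near-invariance of the constant profiles at the tails, exactly the mechanism that powered Theorem~\ref{thm2.1}, now exported to intermediate times via the semigroup factorization of $Q_t$. Tracking this uniformly in $n$ and $x$, across the interpolated interval $s \in [0, t_0]$ and in both the right spreading window and the left annihilation window, is the delicate point I anticipate.
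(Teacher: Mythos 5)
Your decomposition $Q_t = Q_s \circ Q^n_{t_0}$ puts the interpolation step \emph{after} the $n$-fold iterate, and that is where the argument breaks down. After Theorem~\ref{thm2.1}(i), $Q^n_{t_0}[\varphi]$ is a step-like profile, close to $r^*$ on a window $[\alpha_0, n(c^*_+ - \tilde\varepsilon)]$ growing linearly in $n$ and close to $0$ far to the left, and to finish you must control how $Q_s$, for every $s \in [0, t_0]$, acts on such a profile \emph{uniformly} over that growing window. The bridge you propose --- approximate fixity of $r^*$ at the right tail via assumption (A), together with (UB)-invariance of the bands $C_{\phi^*_k}$ --- is not actually available: the hypotheses of Theorem~\ref{thm3.1} say only that the single time-$t_0$ map $Q_{t_0}$ satisfies (A), (UB), (SP), (UC), (GUAA$_-$); they give no information about the intermediate maps $Q_s$, $s \in (0, t_0)$. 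There is no reason why $Q_s$ should approximately fix $r^*$ near $+\infty$ or why $Q_s[C_{\phi^*_k}] \subseteq C_{\phi^*_k}$. Monotonicity of $Q_s$ alone produces nothing, because $Q_s[r^*]$ is not controlled. So the ``delicate point'' you flag at the end is a real gap, not a technicality to track.

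The paper sidesteps all of this by factoring the other way: $Q_{n t_0 + s} = Q^n_{t_0} \circ Q_s$, with the interpolation applied \emph{first}, to the fixed datum $\varphi$. The set $K = Q([0, t_0] \times \{\varphi\})$ is compact in $C_+ \setminus \{0\}$ by continuity of the semiflow on a compact time interval, and the cited discrete spreading and annihilation results from \cite{yz2025} and \cite{yz2023} are invoked in their form that is uniform over compact sets of initial data. This yields bounds on $Q^n_{t_0}[\psi]$ uniformly over $\psi \in K$, hence over all $Q_{n t_0 + s}[\varphi]$ with $s \in [0, t_0]$, after which the continuous-time window in the statement follows from an elementary index shift. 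The key observation your approach misses is that the discrete theorem already gives uniformity over compact sets of initial data; exploiting that requires the interpolation to go \emph{inside} the initial condition, not after the iteration, precisely because the compact-open norm on $C$ cannot, by itself, propagate an estimate uniformly over a spatial window of unbounded length.
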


\noindent
\begin{proof}
Since  $\{Q_t\}_{t\geq 0}$  is an autonomous semiflow, we may assume that  $t_0=1$ in our proof. Otherwise, we consider the autonomous semiflow $\{\hat{Q}_t\}_{t\geq 0}:=\{Q_{t_0t}\}_{t\geq 0}$ instead of
$\{Q_t\}_{t\geq 0}$.
Let $\varphi\in C_{+}\setminus\{0\}$ and $K=Q[[0,1]\times \{\varphi\}]$. Then $K\subseteq C_+\setminus \{0\}$ and  $K$ is compact and uniformly bounded in $C$.

(i) Given any $\varepsilon\in (0,c_+^*)$ and $\gamma>0$.
By applying \cite[Lemma 2.3 and Theorem 3.1-(i)]{yz2025}] to $Q_1$ with $K$ and $c=0$, we have
\begin{equation*}\label{4.2}
\lim\limits_{\alpha\rightarrow \infty}
\sup \left \{||Q_n[Q_t[\varphi]](\cdot, x)-r^*||:n\geq \alpha, x\in  \left [\alpha, n(c_+^*-\frac{\varepsilon}{3})\right ],  \,  t\in [0,1]\right \}= 0.
\end{equation*}
It follows that there is $\alpha_0>0$
such that $||Q_{t+n}[\varphi](\cdot,x)-r^*||<\gamma$ for all $n\geq \alpha_0$, $x\in  \left [\alpha_0, n(c_+^*-\frac{\varepsilon}{3})\right ]$, and $t\in [0,1]$. Hence,
$||Q_{t}[\varphi](\cdot,x)-r^*||<\gamma$ for all $t>1+\alpha_0+\frac{3c_+^*}{\varepsilon}
$ and $x\in  \left [1+\alpha_0+\frac{3c_+^*}{\varepsilon}, t(c_+^*-\varepsilon)\right ]$.
On the other hand,  by slightly adapting the proof of  Theorem~\ref{thm2.1} under {\bf(GUAA$_-$)}, we  obtain
$$\lim\limits_{\alpha\rightarrow \infty}
\Big[\sup\{
||Q_{n}[\psi](\cdot,x)||:(n,-x,\psi)\in  [\alpha,\infty)^2\times K\}\Big]=0,
$$
and hence,
$$\lim\limits_{\alpha\rightarrow \infty}\Big[\sup\{||Q_{t+n}[\varphi](\cdot,x)||:(n,-x)\in  [\alpha,\infty)^2,
 \, t\in [0,1] \}\Big]=0.
$$
It then follows that for any $\gamma>0$, there exists $\alpha_0>0$ such that
$$
||Q_{t+n}[\varphi](\cdot,x)||<\gamma, \,  \, \forall n,-x\in [\alpha_0,\infty),\,
t\in [0,1].
$$
This implies that
$$
||Q_{t}[\varphi](\cdot,x)||<\gamma, \,  \, \forall t,-x\in [2(\alpha_0+1),\infty).
$$
Now the statement   (i) follows from the arbitrariness of $\gamma$.

(ii) Given any $\varepsilon\in (0,\min\{c_+^*,\frac{c_+^*+c_-^*}{2}\})$ and $\gamma>0$.
By applying  \cite[Lemma 2.3 and Theorem 3.2-(i)]{yz2025}]  to $Q_1$ with $K$, we have
\begin{equation*}\label{4.1}
\lim\limits_{n\rightarrow \infty}
\max \left \{||Q_n[Q_t[\varphi]](\cdot, x)-r^*||:\, x\in  \left [n\max\{\frac{\varepsilon}{3},-c_-^*+\frac{\varepsilon}{3}\}, n(c_+^*-\frac{\varepsilon}{3})\right ], \, t\in [0,1]\right \}= 0.
\end{equation*}
This implies that there is $N_0\in \mathbb{N}$
such that $||Q_{t+n}[\varphi](\cdot,x)-r^*||<\gamma$ for all $n>N_0$, $x\in  \left [n\max\{\frac{\varepsilon}{3},-c_-^*+\frac{\varepsilon}{3}\}, n(c_+^*-\frac{\varepsilon}{3})\right ]$, and $t\in [0,1]$. As a result,
$||Q_{t}[\varphi](\cdot,x)-r^*||<\gamma$ for all $t>1+N_0+\frac{3c_+^*}{\varepsilon}
$ and $x\in  \left [t\max\{\varepsilon,-c_-^*+\varepsilon\}, t(c_+^*-\varepsilon)\right ]$.
By the proof of (i) under {\bf(GUAA$_-$)} and the fact that  $\{t\}\times [\varepsilon t,\infty)\subseteq  [\min\{1,\varepsilon\}t,\infty)^2$, we see that  (ii) follows from the arbitrariness of $\gamma$.
\end{proof}

We need the following uniform continuity,  as introduced in \cite{yz2020},
to prove the  asymptotic annihilation for $Q_t$  without assuming  the  unilateral  uniform annihilation.

\begin{enumerate}
\item [{\bf (SC)}]
For any $t_0>0$ and $\phi\in Int(Y_+)$,
$\lim\limits_{C_{\phi}\ni\varphi\to 0}T_{-y}\circ Q_t \circ T_{y}[\varphi](\cdot,0)=0$ in $Y$ uniformly for all $(t,y)\in [0,t_0] \times \mathbb{R}$.

\end{enumerate}

\begin{thm} \label{thm3.2}
Assume that $\{Q_t\}_{t\geq 0}$ is a continuous-time semiflow on $C_+$ such that {\bf (SC)} holds. Let $\varphi\in { C_{+}}$ and $t_0>0$.
 Then the following statements are valid:
\begin{itemize}
\item [{\rm (i)}] If $Q_{t_0}$ satisfies all the assumptions of Theorem~\ref{thm2.2}-(i) and $\varphi(\cdot,x) $ is zero for all sufficiently positive $x$, then $$\lim\limits_{t\rightarrow \infty}
\Big[\sup\{||Q_t[\varphi](\cdot,x)||: x\geq  t(\frac{\liminf\limits_{k\to \infty}c^*_{L_k}}{t_0}+\varepsilon) \mbox{ or } x\leq - t\varepsilon\}\Big]=0, \forall \varepsilon>0.$$

\item [{\rm (ii)}] If $Q_{t_0}$ satisfies all the assumptions of Theorem~\ref{thm2.2}-(ii) and $\varphi(\cdot,x) $ is zero for all sufficiently positive $x$, then $\lim\limits_{\alpha\rightarrow \infty}\Big[\sup\{||Q_t[\varphi](\cdot,x)||:|x|,t\geq \alpha \}\Big]=0$ and $$\lim\limits_{t\rightarrow \infty}\Big[\sup\{
||Q_t[\varphi](\cdot,x)||:|x|\geq t\varepsilon \}\Big]=0, \forall \varepsilon>0.$$

\end{itemize}
\end{thm}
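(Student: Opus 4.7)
The plan is to reduce this statement to the discrete-time Theorem~\ref{thm2.2} by combining it with the uniform continuity hypothesis {\bf (SC)}. As in the proof of Theorem~\ref{thm3.1}, I would first pass to the rescaled semiflow $\{Q_{t_0 t}\}_{t\ge 0}$ and thereby assume without loss of generality that $t_0=1$. Let $\varphi\in C_+$ with $\varphi(\cdot,x)=0$ for all sufficiently positive $x$. By {\bf (UB)} there exists $k_0\in\mathbb{N}$ with $\varphi\in C_{\phi_{k_0}^*}$, so the entire orbit $\{Q_t[\varphi]\}_{t\ge 0}$ stays in $C_{\phi_{k_0}^*}$ and is pointwise dominated by $\phi_{k_0}^*$.

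For part (i), set $c_0=\liminf_{k\to\infty}c^*_{L_k}$, fix $\varepsilon>0$ and $\gamma>0$, and pick an auxiliary $\varepsilon'\in(0,\varepsilon)$. Applying Theorem~\ref{thm2.2}(i) to $Q_1$ with initial datum $\varphi$ supplies, for any $\gamma'>0$, some $N_0\in\mathbb{N}$ such that $\|Q_n[\varphi](\cdot,y)\|<\gamma'$ whenever $n\ge N_0$ and either $y\ge n(c_0+\varepsilon')$ or $y\le -n\varepsilon'$. Writing $t=n+s$ with $n\in\mathbb{N}$ and $s\in[0,1]$, I would use the identity
\[
Q_t[\varphi](\cdot,x)\;=\;T_{-x}\circ Q_s\circ T_x\bigl[T_{-x}(Q_n[\varphi])\bigr](\cdot,0),
\]
together with {\bf (SC)} (taken at $\phi=\phi_{k_0}^*$, $t_0=1$) to reduce the claim to showing that the shifted iterate $\chi_{n,x}:=T_{-x}(Q_n[\varphi])\in C_{\phi_{k_0}^*}$ satisfies $\|\chi_{n,x}\|_C\to 0$ uniformly in $x$ over the admissible region.

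To verify this, I split the weighted sum defining $\|\chi_{n,x}\|_C$ at a truncation level $K$ chosen so that $\|\phi_{k_0}^*\|\cdot\sum_{k>K}2^{-k}<\gamma'/2$. The head $k\le K$ is bounded by $\sup_{|y|\le K,\,\theta\in M}\|Q_n[\varphi](\theta,y+x)\|$, which drops below $\gamma'/2$ as soon as $[x-K,x+K]$ lies inside the conic region furnished by Theorem~\ref{thm2.2}(i). A direct check: if $x\ge(n+s)(c_0+\varepsilon)$ then $x-K\ge n(c_0+\varepsilon')+n(\varepsilon-\varepsilon')-K$, which exceeds $n(c_0+\varepsilon')$ for $n$ large enough independently of $s\in[0,1]$ and of the particular $x$; the case $x\le -t\varepsilon$ is symmetric. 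Part (ii) is then proved by the same scheme with Theorem~\ref{thm2.2}(ii) replacing (i): its two conclusions give $\sup\{\|Q_n[\varphi](\cdot,y)\|:|y|,n\ge\alpha\}\to 0$ as $\alpha\to\infty$ and $\sup\{\|Q_n[\varphi](\cdot,y)\|:|y|\ge n\varepsilon'\}\to 0$ as $n\to\infty$, each of which is transferred to continuous $t$ by the same shift-and-split argument.

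The main technical nuisance will be the bookkeeping between the weighted $C$-norm demanded by {\bf (SC)} (smallness on a bounded interval around the test point together with the global $\phi_{k_0}^*$-bound) and the conic-region smallness supplied by Theorem~\ref{thm2.2}. Once one commits to first fixing $K$ from the tail estimate, then $\varepsilon'$ and $N_0$ so that the head is conic-controlled uniformly over $(s,x)$, the rest is a routine $\varepsilon$-chase; no part of the plan appears to demand any spreading-speed or compactness hypothesis beyond those already in force, the translation-free form of {\bf (SC)} being exactly what bypasses any need for translation monotonicity of $Q_t$.
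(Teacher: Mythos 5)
Your proposal is correct and follows essentially the same route as the paper: reduce to $t_0=1$, decompose $t=n+s$ with $s\in[0,1]$, use the translation identity to write $Q_t[\varphi](\cdot,x)=T_{-x}\circ Q_s\circ T_x\bigl[T_{-x}(Q_n[\varphi])\bigr](\cdot,0)$, and invoke {\bf (SC)} to bootstrap the discrete-time annihilation estimate of Theorem~\ref{thm2.2} into one uniform over $s\in[0,1]$. The only small variation is that you apply the (SC)-plus-Theorem~\ref{thm2.2} scheme symmetrically to both the $x\ge t(\cdot)$ and $x\le -t\varepsilon$ tails (legitimate, since {\bf (SC)} is stated uniformly over $y\in\mathbb{R}$), whereas the paper treats the left tail by reusing the compactness-of-$K$ argument from the proof of Theorem~\ref{thm3.1}; and you work directly with the weighted $\|\cdot\|_C$-norm via a truncation-at-$K$ split, whereas the paper unpacks {\bf (SC)} into the equivalent ``small on $[-d,d]$'' form---these are cosmetic differences, not a different proof.
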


\noindent
\begin{proof}  We only prove (i) since we may use similar arguments to deal with (ii). Since $\{Q_t\}_{t\geq 0}$  is an autonomous semiflow, we assume that  $t_0=1$ without loss of generality. Fix $\varepsilon>0$, $\gamma>0$, and $\varphi\in C_+$ with  $\varphi(\cdot,x) $ being zero for all sufficiently positive $x$.

  In view of  {\bf (UB)} and {\bf (GLC)}, we then have $\varphi\leq \phi_{k_0}^*$, $Q_n[C_{\phi_{k_0}^*}]\subseteq C_{\phi_{k_0}^*}$, and $|c^*_{L_{k_0}}-\liminf\limits_{k\to \infty}c^*_{L_k}|<\frac{\varepsilon}{6}$  for some $k_0\in \mathbb{N}$.
 It follows from  {\bf (SC)} that
there
exist $\delta=\delta(\gamma)>0$ and $d=d(\gamma)>0$ such
that if $\psi\in C_{\phi_{k_0}^*}$ with $||\psi(\cdot,x)||<\delta$ for all
$x\in [-d,d] $,  then
$||T_{-y}\circ Q_{t}\circ T_{y}[\psi](\cdot,0)||<\gamma$ for all $t\in[0,1]$ and $y\in \mathbb{R}_+$.
By virtue of  Theorem 3.3 in~\cite{yz2025},  we obtain
\begin{equation}\label{4.3}
\lim\limits_{n\rightarrow \infty}
\Big[\sup\{
||Q_n[\varphi](\cdot,x)||:x\geq n(c^*_{L_{k_0}}+\frac{\varepsilon}{3})\}\Big]=0.
\end{equation}
It follows from~\eqref{4.3} that there is an integer $n_1>0$
such that
$$
||Q_{n}[\varphi] (\cdot,x)||<\delta,  \,  \,  \forall
x\geq n(c^*_{L_{k_0}}+\frac{\varepsilon}{3}), \, n>n_1.
$$
Let $n_2=\max\{n_1,\frac{3d}{\varepsilon}\}$. Then for any $n>n_2$ and  $y\geq n(c^*_{L_{k_0}}+\frac{2\varepsilon}{3})$, we have
$$
||T_{-y}\circ Q_{n}[\varphi](\cdot,x)||<\delta, \,  \,  \forall x\in [-d,d].
$$
According to the previous discussions,  we know that for any $n>n_2$, $y\geq n(c^*_{L_{k_0}}+\frac{2\varepsilon}{3})$, and $t\in [0,1]$, there holds
\begin{eqnarray*}
||Q_{t+n}[\varphi](\cdot,y)||&=&||T_{-y} \circ Q_{t}\circ T_{y} \circ T_{-y} \circ Q_{n}[\varphi](\cdot,0)||\\
&=&||T_{-y} \circ Q_{t}\circ T_{y} [ T_{-y} \circ Q_{n}[\varphi]](
\cdot,0)||<\gamma.
\end{eqnarray*}
 In particular,
$||Q_{t}[\varphi](\cdot,y)||<\gamma$ for all $t>1+n_2
$ and $y\geq t(\liminf\limits_{k\to \infty}c^*_{L_k}+\varepsilon)$.

By using  the proof of Theorem~\ref{thm3.1}-(i) under {\bf(GUAA$_-$)} and the fact that  $\{t\}\times [\varepsilon t,\infty)\subseteq  [\min\{1,\varepsilon\}t,\infty)^2$, we see  that (i) follows from the arbitrariness of $\gamma$.
\end{proof}

The subsequent result is on  the existence of equilibrium points
under the  {\bf(UC)} and {\bf(UAA)} combination.

\begin{thm} \label{thm3.3}
Suppose that  ${Q}$ and $\underline{Q}$ are two
 monotone continuous-time semiflows on $C_+$ such that $Q_t[C_{r^{**}}]\subseteq C_{r^{**}}$, $Q_t[\varphi]\geq \underline{Q}_t[\varphi]\geq \underline{Q}_t[\psi]$ for all $(t,\varphi,\psi)\in \mathbb{R}_+\times C_{r^{**}}$ with $\varphi\geq \psi$, and for some   $t_0>0$,
  $\underline{Q}_{t_0}$ satisfies all conditions in Theorem~\ref{thm2.3} with  parameters $r$ and $c$ being underlined. If  $\{Q_t[r^{**}]\}_{t\in\mathbb{R}_+}$ is precompact in $C_+$, then  $Q$ has a  nontrivial  equilibrium point $W$ in $C_{r^{**}}$ such that $W(\cdot,-\infty)=0$ and
$W(\cdot,\infty)=r^*$.
\end{thm}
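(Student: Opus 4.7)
The plan is to combine the discrete-time existence result of Theorem~\ref{thm2.3} for $\underline{Q}_{t_0}$ with a monotone iteration in the continuous-time semiflow. Applying Theorem~\ref{thm2.3} to $\underline{Q}_{t_0}$ yields a nontrivial fixed point $\underline{W}\in C_{r^{**}}$ of $\underline{Q}_{t_0}$ with $\underline{W}(\cdot,-\infty)=0$ and $\underline{W}(\cdot,\infty)=r^*$, which will serve as a subsolution anchoring both the nontriviality and the spatial asymptotics of the sought equilibrium of $Q$.

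To build the equilibrium itself I would use $r^{**}$ as a supersolution. Since $Q_t[C_{r^{**}}]\subseteq C_{r^{**}}$ gives $Q_s[r^{**}]\le r^{**}$, the semiflow property together with the monotonicity of $Q_t$ yields $Q_{t+s}[r^{**}]=Q_t[Q_s[r^{**}]]\le Q_t[r^{**}]$, so $t\mapsto Q_t[r^{**}]$ is non-increasing on $\mathbb{R}_+$. Combined with the assumed precompactness of $\{Q_t[r^{**}]\}_{t\ge 0}$ in $C$, this forces $Q_t[r^{**}]\to W$ in $C$ as $t\to\infty$ for some $W\in C_{r^{**}}$. Continuity of $Q_s$ for each $s\ge 0$ then gives $Q_s[W]=\lim_{t\to\infty}Q_{s+t}[r^{**}]=W$, so $W$ is a common equilibrium of the whole semiflow. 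Nontriviality follows from the sandwich $Q_{nt_0}[r^{**}]\ge Q_{nt_0}[\underline{W}]\ge \underline{W}$: the first inequality is monotonicity of $Q_{nt_0}$ applied to $r^{**}\ge\underline{W}$, and the second is an induction on $n$ starting from $Q_{t_0}[\underline{W}]\ge \underline{Q}_{t_0}[\underline{W}]=\underline{W}$; letting $n\to\infty$ along the subsequence $t=nt_0$ yields $W\ge \underline{W}>0$.

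For the spatial asymptotics, the lower bound $W\ge \underline{W}$ immediately gives $\liminf_{x\to\infty}W(\cdot,x)\ge r^*$. My plan for the matching upper bound and for $W(\cdot,-\infty)=0$ is to feed $W$ itself as initial datum into Theorem~\ref{thm2.1}(i) for $\underline{Q}_{t_0}$: using $\underline{Q}_{t_0}^n[W]\le Q_{t_0}^n[W]=W\le Q_{nt_0}[r^{**}]$, the sector-wise limits of $\underline{Q}_{t_0}^n[W]$ delivered by Theorem~\ref{thm2.1}(i) pin $W$ from below to $r^*$ on the right sector and to $0$ on the left, while the descending envelope $Q_{nt_0}[r^{**}]\searrow W$ combined with the argument used in the proof of Theorem~\ref{thm2.1} under~{\bf(GUAA$_-$)} supplies the matching upper control. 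The main obstacle will be precisely this upper control in the absence of~({\bf TM}): since $\underline{Q}_{t_0}$ does not dominate $Q_{t_0}$ from above, no shift-invariant upper system is directly available, and the upper estimate has to be extracted from the monotone descent $Q_{nt_0}[r^{**}]\searrow W$ through the~{\bf(GUAA$_-$)}-type argument, which is exactly the mechanism the current paper introduces to handle the GB-setup.
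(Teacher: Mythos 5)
Your construction of the equilibrium $W$ is the same as the paper's: forward invariance of $C_{r^{**}}$ and monotonicity force $t\mapsto Q_t[r^{**}]$ to be nonincreasing, precompactness gives convergence to some $W\in C_{r^{**}}$, continuity of each $Q_s$ makes $W$ an equilibrium of the whole semiflow, and the sandwich $Q_{nt_0}[r^{**}]\ge Q_{nt_0}[\underline{W}]\ge \underline{W}$ yields $W\ge \underline{W}$. Your handling of $W(\cdot,-\infty)=0$ — pushing the descending envelope $Q_{nt_0}[r^{**}]\searrow W$ through the $\mathbf{(GUAA_-)}$-type argument from the proof of Theorem~\ref{thm3.1}(i) — is also the paper's route.

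The gap is in the right-end limit $W(\cdot,\infty)=r^*$, and it is genuine. First, Theorem~\ref{thm2.3} applied to $\underline{Q}_{t_0}$ with underlined parameters delivers $\underline{W}(\cdot,\infty)=\underline{r}^*$, the fixed point of the limiting system of $\underline{Q}$, not $r^*$; since $\underline{Q}_t\le Q_t$ one only expects $\underline{r}^*\le r^*$, so the lower bound $W\ge\underline{W}$ gives $\liminf_{x\to\infty}W(\cdot,x)\ge\underline{r}^*$, not $\ge r^*$. Second, your proposed tool for the missing control — applying Theorem~\ref{thm2.1}(i) to $\underline{Q}_{t_0}$ with initial datum $W$ — is one-sided in the wrong direction: $\underline{Q}_{t_0}^n[W]\le Q_{t_0}^n[W]=W$, so the sector-wise convergence of $\underline{Q}_{t_0}^n[W]$ to $\underline{r}^*$ again only produces a lower bound on $W$, and gives no $\limsup$ control from above. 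Moreover Theorem~\ref{thm2.1}(i) requires $\min\{\underline{c}_-^*,\underline{c}_+^*\}>0$, whereas Theorem~\ref{thm2.3} is explicitly allowed to operate in the case $\underline{c}_+^*\le 0$, so the hypothesis you would need to invoke it may simply fail.

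The paper closes exactly this gap by invoking Lemma~3.4 of~\cite{yz2025}, a separate sandwiching lemma which, given that $W$ is an equilibrium of $Q$ dominating the nontrivial $\underline{W}$ and that $W=\lim_{t\to\infty}Q_t[r^{**}]$, upgrades the right-end behavior to $W(\cdot,\infty)=r^*$ (identifying it with the fixed point of $Q_+$ rather than of $\underline{Q}_+$). Without that lemma or an equivalent argument that produces the $\limsup_{x\to\infty}W(\cdot,x)\le r^*$ bound and resolves $\underline{r}^*$ versus $r^*$, your proof does not close.
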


\noindent
\begin{proof}   By applying Theorem~\ref{thm2.3}  to $\underline{Q}_{t_0}$, we obtain that $\underline{Q}_{t_0}$ has a  nontrivial  fixed point $\underline{W}$ in $C_{r^{**}}$ such that $\underline{W}(\cdot,\infty)=\underline{r}^*$. According to the compactness and monotonicity of $\{Q_t[r^{**}]\}_{t\in\mathbb{R}_+}$, we know that $W=\lim\limits_{t\to\infty}Q_t[r^{**}]$
for some $W\in C_{r^{**}}$.  Note that $W\geq \underline{W}$ due to $Q_t[r^{**}]\geq \underline{Q}_t[r^{**}]$.  Hence,
$W(\cdot,\infty)=r^*$ follows from \cite[Lemma 3.4]{yz2025}. According to the proof of Theorem~\ref{thm3.1}-(i) under {\bf(GUAA$_-$)}, we then have $W(\cdot,-\infty)=0$.
\end{proof}


By the  contradiction argument  and  Theorem~\ref{thm2.4}, as applied  to $Q_{t_0}$, we easily obtain  the following result about the non-existence of the nontrivial equilibrium points.

\begin{thm} \label{thm3.4}
Assume that $Q$ is a continuous-time semiflow on $C_+$ and there exist $t_0>0$ and $\underline{\mathcal{L}_+}:C_+\to C_+$  such that $(Q_{t_0},\underline{\mathcal{L}_+})$ satisfies all the conditions of Theorem~\ref{thm2.4}. Then $Q$ has no  nontrivial equilibrium point $W$ in $C_{r^{**}}$  with
$\lim\limits_{x\to -\infty}\Big[||W(\cdot,x)||e^{-\mu_\varepsilon x}\Big]<\infty$ for some  $\varepsilon\in\left(0,-c^*_-(\underline{\mathcal{L}_{+}})\right)$. Here $\mu_\varepsilon$ is defined as in Lemma~\ref{lemma2.1}.
\end{thm}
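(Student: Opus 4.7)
The plan is to argue by contradiction and reduce the continuous-time statement to the discrete-time result in Theorem~\ref{thm2.4}. Suppose, for the sake of contradiction, that $W$ is a nontrivial equilibrium point of $Q$ in $C_{r^{**}}$ satisfying the prescribed exponential decay bound $\limsup\limits_{x\to -\infty}\bigl[\|W(\cdot,x)\|e^{-\mu_\varepsilon x}\bigr]<\infty$ for some $\varepsilon\in\bigl(0,-c^*_-(\underline{\mathcal{L}_+})\bigr)$. Since $W$ is an equilibrium of the semiflow, $Q_t[W]=W$ for every $t\geq 0$; specializing to $t=t_0$ yields $Q_{t_0}[W]=W$, so $W$ is a fixed point of the discrete-time map $Q_{t_0}$.

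Next, I would invoke the hypothesis that $(Q_{t_0},\underline{\mathcal{L}_+})$ satisfies all assumptions of Theorem~\ref{thm2.4}. Under this hypothesis, $\underline{\mathcal{L}_+}$ dominates $Q_{t_0}$ on the relevant order interval, the principal eigenvalue/eigenfunction apparatus of Lemma~\ref{lemma2.1} is available, and the sign condition $c^*_-(\underline{\mathcal{L}_+})<0$ holds, so the same $\mu_\varepsilon$ that governs the decay of $W$ also appears in the conclusion of Theorem~\ref{thm2.4}. Applying that theorem directly to $Q_{t_0}$ and to the fixed point $W$ then forces $W=0$, contradicting the assumption that $W$ is nontrivial.

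The argument is essentially a one-line reduction, and I expect no genuine obstacle: the key observation is that being an equilibrium of the continuous-time semiflow is strictly stronger than being a fixed point of $Q_{t_0}$, while the exponential bound on $W$ at $-\infty$ is a static spatial condition that is insensitive to $t$. The only minor bookkeeping point is to check that the iteration inequality $W=Q_{t_0}^n[W]\leq \underline{\mathcal{L}_+}^n[W]$ used inside the proof of Theorem~\ref{thm2.4} is available here, but this is immediate from the domination $Q_{t_0}\leq \underline{\mathcal{L}_+}$ on $C_{r^{**}}$ and the fact that $W\in C_{r^{**}}$. Hence no additional continuous-time structure beyond what is already packaged into the hypothesis needs to be introduced.
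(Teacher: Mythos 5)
Your proof is correct and matches the paper's approach: the paper likewise states that Theorem~\ref{thm3.4} follows from a contradiction argument combined with Theorem~\ref{thm2.4} applied to $Q_{t_0}$, exactly as you argue by observing that an equilibrium of the semiflow is in particular a fixed point of the time-$t_0$ map.
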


To present our last result in this subsection, we need the following assumption.

\

 {\bf(GAS-CSF)} There exists $W\in C_+ $ such that $\lim\limits_{t\to \infty }Q_t[\varphi]=W$ in $C$ for all $\varphi\in C_+\setminus \{0\}$.

\begin{thm} \label{thm2.10}
Assume that $\{Q_t\}_{t\geq 0}$ is a continuous-time semiflow on $C_+$ such that $\{Q_t\}_{t\geq 0}$ satisfies {\bf (GAS-CSF)}. Let $\varphi\in C_{+}\setminus\{0\}$ and $t_0\in (0,\infty)$.  If $Q_{t_0}$ satisfies all the assumptions of Theorem~\ref{thm2.5} with $Q$ replaced by $Q_{t_0}$, then $$\lim\limits_{t\rightarrow \infty}
\Big[\sup\{||Q_t[\varphi](\cdot, x)-W(\cdot,x)||:  x\leq t(\frac{c_+^*}{t_0}-\varepsilon)\}\Big]= 0$$ for all  $\varepsilon\in (0,\frac{c_+^*}{t_0})$.

\end{thm}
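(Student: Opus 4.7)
The plan is to mirror the proof of Theorem~\ref{thm2.5}, replacing the use of Theorem~\ref{thm2.1}-(i) by its continuous-time counterpart Theorem~\ref{thm3.1}-(i), and using {\bf(GAS-CSF)} in place of {\bf(GAS)} to handle a bounded middle region. Fix $\varepsilon\in(0,c_+^*/t_0)$ and $\gamma>0$.

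First I would apply Theorem~\ref{thm3.1}-(i) to $\{Q_t\}_{t\geq 0}$ with the given initial datum $\varphi$; the hypothesis on $Q_{t_0}$ supplies all that is needed. This yields $\alpha_0>0$ with
$$
\|Q_t[\varphi](\cdot,x)-r^*\cdot {\bf 1}_{\mathbb{R}_+}(x)\|<\frac{\gamma}{3}, \qquad \forall\,(t,x)\in\mathcal{C}_{\alpha_0,\varepsilon}^+.
$$
Next I would transfer this estimate onto the limit profile $W$. For each fixed $x$ with $|x|\geq\alpha_0$, once $t$ satisfies $t\geq\alpha_0 t_0$ and (in the $x\geq\alpha_0$ case) $t\geq x/(c_+^*/t_0-\varepsilon)$, the pair $(t,x)$ lies in $\mathcal{C}_{\alpha_0,\varepsilon}^+$. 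The weighted-sup structure $\|\phi\|_X=\sum_n 2^{-n}\sup_{|y|\leq n}\|\phi(y)\|$ together with $\|\cdot\|_C=\sup_\theta\|\cdot\|_X$ makes $C$-convergence at least as strong as local uniform convergence in $x$ uniform in $\theta$, so {\bf(GAS-CSF)} yields $Q_t[\varphi](\cdot,x)\to W(\cdot,x)$ in $Y$ for each fixed $x$. Sending $t\to\infty$ in the previous display then produces
$$
\|W(\cdot,x)-r^*\cdot {\bf 1}_{\mathbb{R}_+}(x)\|\leq\frac{\gamma}{3}, \qquad \forall\,|x|\geq\alpha_0.
$$
Combining the two estimates via the triangle inequality gives $\|Q_t[\varphi](\cdot,x)-W(\cdot,x)\|<\tfrac{2\gamma}{3}$ throughout $\mathcal{C}_{\alpha_0,\varepsilon}^+$.

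To finish, I would fill in the bounded middle strip $|x|\leq\alpha_0$ by a second invocation of {\bf(GAS-CSF)}: since $C$-convergence is uniform on the compact set $M\times[-\alpha_0,\alpha_0]$, there exists $T_0>0$ such that $\|Q_t[\varphi](\cdot,x)-W(\cdot,x)\|<\tfrac{\gamma}{3}$ for all $t>T_0$ and all $|x|\leq\alpha_0$. For $t>\max\{T_0,\alpha_0 t_0,\alpha_0/(c_+^*/t_0-\varepsilon)\}$ the half-line $\{x:x\leq t(c_+^*/t_0-\varepsilon)\}$ decomposes into $[-\alpha_0,\alpha_0]$ and the two portions of $\mathcal{C}_{\alpha_0,\varepsilon}^+$ cut out by $t$; hence $\|Q_t[\varphi](\cdot,x)-W(\cdot,x)\|<\gamma$ uniformly on this half-line. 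Letting $\gamma\downarrow 0$ concludes the proof.

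The only genuinely delicate point I anticipate is the limit passage in the second step, namely justifying that convergence in $C$ implies the pointwise-in-$x$, uniform-in-$\theta$ convergence needed to transfer the bound from $Q_t[\varphi]$ to $W$; this follows directly from the explicit weighted norms on $X$ and $C$, so it is bookkeeping rather than a substantive analytic obstacle. Once this is in hand, the argument is a clean transplantation of the discrete-time proof of Theorem~\ref{thm2.5} into the continuous-time framework.
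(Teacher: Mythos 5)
Your proposal is correct and follows essentially the same route as the paper: apply Theorem~\ref{thm3.1}-(i) to obtain the spreading estimate on the outer region, combine it with the corresponding estimate for $W$ via the triangle inequality, and fill in the compact middle strip $[-\alpha_0,\alpha_0]$ using {\bf(GAS-CSF)}. The only (cosmetic) difference is that you obtain the bound $\|W(\cdot,x)-r^*\cdot{\bf 1}_{\mathbb{R}_+}(x)\|\le\gamma/3$ on the outer region by letting $t\to\infty$ in the estimate for $Q_t[\varphi]$ and using that $C$-convergence implies local-uniform-in-$x$, uniform-in-$\theta$ convergence, whereas the paper gets the same bound by reading $W$ itself as an initial datum in Theorem~\ref{thm3.1} (noting $W$ is a nontrivial fixed point of each $Q_t$ under {\bf(GAS-CSF)}); both derivations are correct and equivalent in substance.
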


\noindent
\begin{proof}
Given any $\gamma>0$ and $\varepsilon\in (0,\infty)$.  Applying Theorem~\ref{thm3.1} to $Q_t$,  we see that there exists $\alpha_0>0$ such that
$||Q_t[\varphi](\cdot, x)||<\frac{\gamma}{3}$ and $||W(\cdot, x)||<\frac{\gamma}{3}$ for all
$t\in [\alpha_0,\infty), x\in (-\infty,-\alpha_{0}]\bigcup [\alpha_{0},t(\frac{c_+^*}{t_0}-\varepsilon)]$. Thus, we have
$$
||Q_t[\varphi](\cdot, x)-W(\cdot, x)||<\frac{2\gamma}{3},\, \,  \forall
t\geq \alpha_0,  \, x\in (-\infty,-\alpha_{0}]\bigcup [\alpha_{0},t(\frac{c_+^*}{t_0}-\varepsilon)].
$$
 It follows from {\bf (GAS-CSF)} that there exists $T_0>0$ such that
 $$
 ||Q_t[\varphi](\cdot, x)-W(\cdot, x)||<\frac{\gamma}{3}, \, \,  \forall x\in [-\alpha_0,\alpha_0], \,  t>T_0.
 $$
 These, together with the choices of $\alpha_0$ and $T_0$, imply that $||Q_t[\varphi](\cdot, x)-W(\cdot, x)||<\gamma$ for all  $t>\max\{T_0,\alpha_0\}$ and $x\leq t(\frac{c_+^*}{t_0}-\varepsilon)$. Now  the arbitrariness of $\gamma$ gives rise to our results.  \end{proof}

\subsection{Nonautonomous systems}
 Let
 $P:\mathbb{R}_{+}\times  {C}_+\to{C}_+$  be a map such that for any given $r\in Int(\mathbb{R}^N_+)$,
$P|_{\mathbb{R}_{+}\times C_r}: \mathbb{R}_{+}\times {C}_r\to{C}_+$
is continuous.  For any given $c\in \mathbb{R}$, we define a family of
mappings $Q_t:=T_{-ct}\circ P[t,\cdot]$  with parameter
$t\in \mathbb{R}_+$.

Under the assumption of  {\bf(UC)} and {\bf(GUAA)} combination,
we can apply Theorem~\ref{thm3.1} to  $\{\varphi,\{Q_t\}_{t\in \mathbb{R}_+}\}$ to obtain the following result.

\begin{thm} \label{thm3.6}  Assume  that there exists   $c\in \mathbb{R}$ such that $Q_t:=T_{-ct}\circ P[t,\cdot]$ is a continuous-time semiflow on $C_+$ and
let $\varphi\in C_{+}\setminus\{0\}$. Then the following statements are valid:
\begin{itemize}
\item [{\rm (i)}] If  $Q_{t_0}$ satisfies all the assumptions of Theorem~\ref{thm2.1}-(i) for some $t_0>0$, 
then $$\lim\limits_{\alpha \rightarrow \infty}
\Big[\sup\{||P[t,\varphi](\cdot, x)-r^*\cdot {\bf {1}}_{\mathbb{R}_+}(x-ct)||:(t,x)\in \mathcal{D}_{\alpha,\varepsilon,c}^+\}\Big]= 0,$$ where $\varepsilon\in (0,\frac{c_+^*}{t_0})$ and $\mathcal{D}_{\alpha,\varepsilon,c}^+=\{(t,x)\in \mathbb{R}_+\times \mathbb{R}:t\geq \alpha t_0\mbox{ and } x \in [\alpha+ct, t(c+\frac{c_+^*}{t_0}-\varepsilon)] \bigcup (-\infty,ct-\alpha]\}$.

\item [{\rm (ii)}] If  $Q_{t_0}$ satisfies all the assumptions of Theorem~\ref{thm2.1}-(ii) for some $t_0>0$,
 then $$\lim\limits_{t \rightarrow \infty}
\Big[\sup\{||P[t,\varphi](\cdot, x)-r^*_+\cdot {\bf {1}}_{\mathbb{R}_+}(x-ct)||:x\in t\mathcal{D}_{\varepsilon,c}^+\}\Big]= 0,$$ where $\varepsilon\in (0,\frac{1}{2t_{0}}\min\{c_+^*,c_+^*+c_-^*\})$ and $\mathcal{D}_{\varepsilon,c}^-=[\max\{\varepsilon,c-\frac{c_-^*}{t_{0}}+\varepsilon\}, c+\frac{c_+^*}{t_{0}}-\varepsilon] \bigcup (-\infty,c-\varepsilon]$.
\end{itemize}\end{thm}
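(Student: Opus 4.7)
\medskip

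\noindent\textbf{Proof plan for Theorem~\ref{thm3.6}.}
The plan is to reduce everything to Theorem~\ref{thm3.1} via a single change of the spatial variable, namely a moving frame shift by $ct$. Since $Q_t=T_{-ct}\circ P[t,\cdot]$ is, by hypothesis, an autonomous continuous-time semiflow on $C_+$ that satisfies the assumptions of Theorem~\ref{thm2.1}\,(i) (resp.\ (ii)) through its time-$t_0$ map $Q_{t_0}$, we can invoke Theorem~\ref{thm3.1} directly to obtain the convergence statements for $Q_t[\varphi](\cdot,y)$ on the sets $\mathcal{C}_{\alpha,\varepsilon}^+$ and $t\mathcal{C}_{\varepsilon}^+$.

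First I would record the key identity
\[
P[t,\varphi](\cdot,x)\;=\;T_{ct}\circ Q_t[\varphi](\cdot,x)\;=\;Q_t[\varphi](\cdot,x-ct),
\qquad\forall\,(t,x)\in \mathbb{R}_+\times\mathbb{R},\;\varphi\in C_+,
\]
which follows immediately from the definition of $Q_t$ and the involution $T_{-ct}\circ T_{ct}=\mathrm{Id}$. Setting $y:=x-ct$, this identity converts every claim about $P[t,\varphi](\cdot,x)$ into an equivalent claim about $Q_t[\varphi](\cdot,y)$, and it converts the indicator term $r^*\cdot\mathbf{1}_{\mathbb R_+}(x-ct)$ in the theorem into $r^*\cdot\mathbf{1}_{\mathbb R_+}(y)$, matching the target limit function in Theorem~\ref{thm3.1}.

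Next, for part (i), I would verify the set equality $\{y=x-ct:(t,x)\in\mathcal{D}_{\alpha,\varepsilon,c}^+\}=\mathcal{C}_{\alpha,\varepsilon}^+$. The two branches translate as follows: $x\in[\alpha+ct,\,t(c+\tfrac{c_+^*}{t_0}-\varepsilon)]$ becomes $y\in[\alpha,\,t(\tfrac{c_+^*}{t_0}-\varepsilon)]$, and $x\in(-\infty,ct-\alpha]$ becomes $y\in(-\infty,-\alpha]$, while the constraint $t\geq\alpha t_0$ is the same in both descriptions. Applying Theorem~\ref{thm3.1}\,(i) to $Q_t$ and then rewriting everything back in the $x$-variable via the identity above yields (i). For part (ii) I would similarly verify $\{y=x-ct:x\in t\,\mathcal{D}_{\varepsilon,c}^+\}=t\,\mathcal{C}_\varepsilon^+$ by dividing through by $t$ and shifting by $c$, and then invoke Theorem~\ref{thm3.1}\,(ii).

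There is essentially no hard step here: the proof is a pure change of reference frame. The only thing that requires a bit of care is bookkeeping, namely checking that the ranges of the parameter $\varepsilon$ and the set descriptions $\mathcal{D}_{\alpha,\varepsilon,c}^+$ and $\mathcal{D}_{\varepsilon,c}^+$ match exactly what one obtains by shifting $\mathcal{C}_{\alpha,\varepsilon}^+$ and $\mathcal{C}_\varepsilon^+$ by $ct$ (or by $c$ on the $x/t$-axis). Once these set translations are verified, both statements follow at once from Theorem~\ref{thm3.1} applied to the autonomous semiflow $\{Q_t\}_{t\geq 0}$.
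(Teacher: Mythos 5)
Your proof is correct and matches the paper's approach exactly: the paper's entire proof of Theorem~\ref{thm3.6} is the remark that it follows by applying Theorem~\ref{thm3.1} to the autonomous semiflow $\{Q_t\}_{t\ge 0}$, and your write-up supplies precisely the change of variable $y=x-ct$ and the set bookkeeping that the paper leaves implicit. One small caveat worth flagging when you verify the bookkeeping in part (ii): the set $\mathcal{D}_{\varepsilon,c}^-$ as printed (with first endpoint $\max\{\varepsilon,\,c-\frac{c_-^*}{t_0}+\varepsilon\}$) does not shift exactly onto $\mathcal{C}_\varepsilon^+$ under $y=x-ct$; the translate of $\mathcal{C}_\varepsilon^+$ would require $\max\{c+\varepsilon,\,c-\frac{c_-^*}{t_0}+\varepsilon\}$, so this appears to be a typo in the paper's statement rather than a gap in your argument.
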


By  Theorems~\ref{thm3.2}, as applied  to $\{Q, \varphi\}$, we have the following result
on the  asymptotic annihilation.

\begin{thm} \label{thm3.7}
Assume that $c\in \mathbb{R}$ and $Q_t:=T_{-ct}\circ P[t,\cdot]$ is a continuous-time semiflow on $C_+$ such that {\bf (SC)} holds, and let $\varphi\in { C_{+}}$.
 Then the following statements are valid:
\begin{itemize}
\item [{\rm (i)}] If $Q_{t_0}$ satisfies all the assumptions of Theorem~\ref{thm3.2}-(i)
for some  $t_0>0$  and $\varphi(\cdot,x) $ is zero for all sufficiently positive $x$, then $$\lim\limits_{t\rightarrow \infty}
\Big[\sup\{
||P[t,\varphi](\cdot,x)||: x\geq  t(c+\frac{\liminf\limits_{k\to \infty}c^*_{L_k}}{t_0}+\varepsilon) \mbox{ or } x\leq  t(c-\varepsilon)\}\Big]=0, \forall \varepsilon>0.$$

\item [{\rm (ii)}] If $Q_{t_0}$ satisfies all the assumptions of Theorem~\ref{thm3.2}-(ii)
for some  $t_0>0$ and $\varphi(\cdot,x) $ is zero for all sufficiently positive $x$, then $\lim\limits_{\alpha\rightarrow \infty}\Big[\sup\{||P[t,\varphi](\cdot,x)||:|x-ct|,t\geq \alpha \}\Big]=0$ and $$\lim\limits_{t\rightarrow \infty}\Big[\sup\{
||P[t,\varphi](\cdot,x)||:|x-ct|\geq t\varepsilon \}\Big]=0, \forall \varepsilon>0.$$

\end{itemize}
\end{thm}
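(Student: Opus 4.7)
The plan is to reduce Theorem~\ref{thm3.7} to Theorem~\ref{thm3.2} by exploiting the identity that relates $P$ to the semiflow $Q_t$. From the definition $Q_t=T_{-ct}\circ P[t,\cdot]$, we have $P[t,\cdot]=T_{ct}\circ Q_t$, so for every $\varphi\in C_+$,
\[
P[t,\varphi](\theta,x)=Q_t[\varphi](\theta,x-ct), \qquad \forall (t,\theta,x)\in \mathbb{R}_+\times M\times \mathbb{R}.
\]
Thus bounds on $P[t,\varphi](\cdot,x)$ translate into bounds on $Q_t[\varphi](\cdot,y)$ via the change of variable $y=x-ct$.

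For part (i), I would first observe that the hypothesis that $\varphi(\cdot,x)=0$ for all sufficiently positive $x$ is a condition on the initial state itself, and therefore transfers verbatim when we view $\varphi$ as an initial datum for $\{Q_t\}$. Under the standing assumptions, $Q_{t_0}$ satisfies all hypotheses of Theorem~\ref{thm2.2}-(i), and {\bf (SC)} holds for $\{Q_t\}$, so Theorem~\ref{thm3.2}-(i) applies to $\{Q_t\}_{t\ge 0}$ and yields, for each $\varepsilon>0$,
\[
\lim_{t\to\infty}\sup\Bigl\{\|Q_t[\varphi](\cdot,y)\|:\ y\ge t\bigl(\tfrac{\liminf_{k\to\infty}c^*_{L_k}}{t_0}+\varepsilon\bigr)\ \text{or}\ y\le -t\varepsilon\Bigr\}=0.
\]
Substituting $y=x-ct$ converts the two half-line conditions in $y$ into $x\ge t(c+\tfrac{\liminf_{k\to\infty}c^*_{L_k}}{t_0}+\varepsilon)$ and $x\le t(c-\varepsilon)$, which are exactly the regions in the statement of Theorem~\ref{thm3.7}-(i). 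Combined with the identity $\|P[t,\varphi](\cdot,x)\|=\|Q_t[\varphi](\cdot,x-ct)\|$, the desired annihilation follows.

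Part (ii) is handled in an entirely analogous manner: one applies Theorem~\ref{thm3.2}-(ii) to $\{Q_t\}$ to obtain both $\lim_{\alpha\to\infty}\sup\{\|Q_t[\varphi](\cdot,y)\|:|y|,t\ge\alpha\}=0$ and $\lim_{t\to\infty}\sup\{\|Q_t[\varphi](\cdot,y)\|:|y|\ge t\varepsilon\}=0$, and then substitutes $y=x-ct$, noting that $|y|\ge\alpha$ becomes $|x-ct|\ge\alpha$ and $|y|\ge t\varepsilon$ becomes $|x-ct|\ge t\varepsilon$. There are no nontrivial obstacles: the only verification needed is that the translated region in $x$ matches the set $\{x:|x-ct|\ge t\varepsilon\}$ (respectively $\{x:|x-ct|\ge\alpha\}$), which is immediate.

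The proof is essentially a change-of-frame argument; the real content lies in Theorem~\ref{thm3.2}, whose hypotheses are inherited by $\{Q_t\}$ by assumption. The only mild point of care is keeping track of the direction of the shift (namely, that $P$ corresponds to $Q$ moved rightward by $ct$, so the propagation cone of $Q$ gets shifted by $c$), but no analytic difficulty arises. I would therefore expect the write-up to be short, with the bulk of the text devoted to the substitution identities and to pointing out that the support and structural hypotheses transfer unchanged between $P[0,\cdot]$ and $Q_0=\mathrm{Id}$.
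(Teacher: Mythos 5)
Your proposal is correct and matches the paper's approach exactly: the paper simply states that Theorem~\ref{thm3.7} follows "by Theorem~\ref{thm3.2}, as applied to $\{Q,\varphi\}$," and the implicit content of that assertion is precisely the change-of-frame identity $P[t,\varphi](\cdot,x)=Q_t[\varphi](\cdot,x-ct)$ that you spell out. Your write-up just makes explicit the substitution $y=x-ct$ that the paper leaves to the reader.
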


We say that $W(\cdot,x-ct)$ is a travelling wave of $P$ if
$W:M\times\mathbb{R}\to \mathbb{R}_+$ is a bounded and
nonconstant continuous function such that
$P[t,W](\theta,x)=W(\theta,x-tc)$ for all $(\theta,x)\in
M\times \mathbb{R}$ and $t\in \mathbb{R}_+$, and that
$W$ connects $0$ to $r^*$ if $W(\cdot,-\infty)=0$ and
$W(\cdot,\infty)=r^*$.
As a consequence  of Theorem \ref{thm3.3},
we have the following result on the existence of travelling waves.

\begin{thm} \label{thm3.8} Suppose that $t_0,t_1>0$, $c\in \mathbb{R}$, and $Q_t:=T_{-ct}\circ P[t,\cdot]$, $\underline{Q}_t$ are continuous-time semiflows on $C_+$ such that $\{Q_t,\underline{Q}_{t}\}$ satisfies all conditions in Theorem~\ref{thm3.3}. If  $\underline{Q}_{t_0}$ satisfies all conditions in Theorem~\ref{thm2.3} with parameters $r$ and $c$ being underlined, then  $P[t,\cdot]$ has a  nontrivial  travelling wave $W(\cdot,x-ct)$ in $C_{r^{**}}$ such that $W(\cdot,-\infty)=0$ and
$W(\cdot,\infty)=r^*$.
\end{thm}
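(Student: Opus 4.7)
The plan is to reduce the existence of a traveling wave for the nonautonomous system $P$ to the existence of a nontrivial equilibrium for the autonomous semiflow $Q_t=T_{-ct}\circ P[t,\cdot]$, and then apply Theorem~\ref{thm3.3}.

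\textbf{Step 1 (translate to a fixed point problem).} A bounded continuous function $W:M\times\mathbb{R}\to\mathbb{R}_+^N$ is a traveling wave of $P$ with speed $c$ connecting $0$ to $r^*$ if and only if $P[t,W](\theta,x)=W(\theta,x-ct)$, i.e. $T_{-ct}\circ P[t,W]=W$, for all $t\ge 0$, together with $W(\cdot,-\infty)=0$ and $W(\cdot,\infty)=r^*$. Thus it suffices to produce $W\in C_{r^{**}}\setminus\{0\}$ with $Q_t[W]=W$ for every $t\ge 0$ and the prescribed limits at $\pm\infty$.

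\textbf{Step 2 (apply Theorem~\ref{thm3.3}).} By hypothesis, $Q_t$ is a continuous-time semiflow on $C_+$, $\underline{Q}_t$ is a continuous-time semiflow on $C_+$, and the pair $(Q,\underline{Q})$ satisfies the monotonicity/domination assumptions of Theorem~\ref{thm3.3}; moreover the time-$t_0$ map $\underline{Q}_{t_0}$ satisfies all hypotheses of Theorem~\ref{thm2.3} with the underlined parameters. Theorem~\ref{thm3.3} then furnishes a nontrivial equilibrium $W\in C_{r^{**}}$ of the semiflow $Q$, i.e.\ $Q_t[W]=W$ for all $t\ge 0$, and moreover $W(\cdot,-\infty)=0$, $W(\cdot,\infty)=r^*$. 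Note that $W$ is nonconstant since $0\ne r^*$, and $W$ is continuous as an element of $C=C(M,X)$.

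\textbf{Step 3 (recover the traveling wave of $P$).} Unwinding the definition of $Q_t$, the equation $Q_t[W]=W$ becomes
\[
T_{-ct}\circ P[t,W]=W,\qquad \forall\, t\ge 0,
\]
and applying $T_{ct}$ to both sides gives $P[t,W](\theta,x)=W(\theta,x-ct)$ for all $(\theta,x)\in M\times\mathbb{R}$ and $t\ge 0$. Combined with the limits $W(\cdot,\pm\infty)$ from Step 2, this exhibits $W(\cdot,x-ct)$ as a nontrivial traveling wave of $P$ in $C_{r^{**}}$ that connects $0$ to $r^*$, as required.

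\textbf{Where the work really is.} All the heavy lifting has already been packaged into Theorems~\ref{thm2.3} and \ref{thm3.3}: constructing the fixed point of $Q_{t_0}$ from the lower semiflow $\underline{Q}_{t_0}$ via the bilateral (UC)/(UAA) combination, upgrading it to an equilibrium of the full one-parameter semiflow $Q_t$, and identifying its limits at $\pm\infty$ using Theorem~\ref{thm2.1} under (GUAA$_-$) (for the $-\infty$ limit) and Lemma~3.4 of~\cite{yz2025} (for the $+\infty$ limit). The only genuinely new ingredient at this stage is the dictionary $Q_t[W]=W\Longleftrightarrow P[t,W]=T_{ct}[W]$, so no additional obstacle beyond bookkeeping is expected.
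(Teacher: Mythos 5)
Your proof is correct and matches the paper's intent exactly: the paper states Theorem~\ref{thm3.8} ``as a consequence of Theorem~\ref{thm3.3}'' without further elaboration, and your three-step argument (traveling wave $\Leftrightarrow$ equilibrium of $Q_t$; apply Theorem~\ref{thm3.3}; unwind $Q_t=T_{-ct}\circ P[t,\cdot]$) is precisely the bookkeeping that one-liner leaves implicit.
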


Regarding the global attractivity of the positive travelling wave, we can use
Theorem~\ref{thm2.10} to establish  the following result.
\begin{thm} \label{thm3.9}
Assume that  there exists   $c\in \mathbb{R}$ such that $Q_t:=T_{-ct}\circ P[t,\cdot]$  is a continuous-time semiflow on $C_+$ and  $\{Q_t\}_{t\geq 0}$ satisfies {\bf (GAS-CSF)}. Let $\varphi\in C_{+}\setminus\{0\}$ and $t_0\in (0,\infty)$.
 If $Q_{t_0}$ satisfies all the assumptions of Theorem~\ref{thm2.5} with $Q$ replaced by $Q_{t_0}$, then $$\lim\limits_{t\rightarrow \infty}
\Big[\sup\{||P[t,\varphi](\cdot, x)-W(\cdot,x-ct)||: x\leq t(c+\frac{c_+^*}{t_0}-\varepsilon)\}\Big]= 0,$$ where $\varepsilon\in (0,\frac{c_+^*}{t_0})$.
\end{thm}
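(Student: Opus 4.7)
The plan is to reduce Theorem~\ref{thm3.9} directly to Theorem~\ref{thm2.10} via the moving-frame correspondence $Q_t=T_{-ct}\circ P[t,\cdot]$. First, {\bf (GAS-CSF)} supplies $W\in C_+$ with $Q_t[\varphi]\to W$ in $C$ for every $\varphi\in C_+\setminus\{0\}$. A standard semiflow argument (using continuity of each $Q_s$ on bounded order intervals together with the identity $Q_s\circ Q_t=Q_{s+t}$) then shows $Q_s[W]=W$ for all $s\ge 0$; unwinding the definition of $Q_s$ yields $P[s,W](\theta,x)=W(\theta,x-cs)$, so $W(\cdot,x-ct)$ is a genuine traveling wave profile of $P$ and the right-hand side of the conclusion is meaningful.

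Next, I would apply Theorem~\ref{thm2.10} to the autonomous semiflow $\{Q_t\}_{t\ge 0}$. The hypothesis of Theorem~\ref{thm3.9} is tailored so that $Q_{t_0}$ satisfies the assumptions of Theorem~\ref{thm2.5}; Theorem~\ref{thm2.10} then gives
\begin{equation*}
\lim\limits_{t\rightarrow \infty}\Big[\sup\{||Q_t[\varphi](\cdot,x)-W(\cdot,x)||:\,x\leq t(\tfrac{c_+^*}{t_0}-\varepsilon)\}\Big]=0
\end{equation*}
for every $\varepsilon\in(0,\tfrac{c_+^*}{t_0})$.

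The final step is a change of spatial variable in the moving frame. Because $Q_t[\varphi](\theta,x)=T_{-ct}[P[t,\varphi]](\theta,x)=P[t,\varphi](\theta,x+ct)$, the substitution $y=x+ct$ converts the norm inside the supremum into $||P[t,\varphi](\cdot,y)-W(\cdot,y-ct)||$ and transforms the constraint $x\le t(\tfrac{c_+^*}{t_0}-\varepsilon)$ into $y\le t(c+\tfrac{c_+^*}{t_0}-\varepsilon)$. Substituting into the previous display reproduces exactly the statement of Theorem~\ref{thm3.9}. No step presents a real obstacle; the mildest subtlety is the semiflow passage $Q_s[W]=W$, which is routine once one notes that $\{Q_t[\varphi]\}_{t\ge 0}$ is eventually contained in some order interval $C_r$ on which $Q_s$ is continuous by hypothesis. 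Thus the proof amounts to packaging Theorem~\ref{thm2.10} via the moving coordinate $y=x+ct$.
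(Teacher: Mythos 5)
Your proof is correct and follows exactly the route the paper indicates: apply Theorem~\ref{thm2.10} to the autonomous semiflow $\{Q_t\}_{t\ge 0}$, then translate back to $P[t,\cdot]$ via the substitution $y=x+ct$ using $Q_t[\varphi](\theta,x)=P[t,\varphi](\theta,x+ct)$. The preliminary remark that $Q_s[W]=W$ (so $W(\cdot,x-ct)$ is a traveling wave of $P$) is a harmless extra; the stated conclusion is already a direct consequence of Theorem~\ref{thm2.10} and the change of variables.
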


Following  \cite[Section 3.1]{Zhaobook}, we say a map $Q:\mathbb{R}_{+}\times{C}_+\to{C}_+$ is a
 continuous-time  $\omega$-periodic semiflow on $C_+$ if  for any given  $r\in Int(\mathbb{R}_+^N)$,
 $Q|_{\mathbb{R}_{+}\times C_r}: \mathbb{R}_{+}\times{C}_r\to{C}_+$
 is continuous,  $Q_0=Id|_{{C}_+}$,  and $Q_{t}\circ
 Q_{\omega}=Q_{t+\omega}$ for  some number $\omega>0$ and all $t\in\mathbb{R}_+$,  where $Q_t:=
 Q(t,\cdot)$ for all $t\in\mathbb{R}_+$.

\begin{rem}
In the case where $Q_t:=T_{-ct}\circ P[t,\cdot]$ is a continuous-time
$\omega$-periodic semiflow on $C_+$, we can apply the main results in Section~\ref{3sec} to the Poincar\'e map $Q_{\omega}$  to
establish the spreading properties and the forced time-periodic traveling waves with speed $c$ for the nonautonomous evolution system
$P[t,\cdot]$. We refer to\cite{LYZ} for the Poincar\'e map approach to
monotone periodic semiflows.
\end{rem}

\section{ An integro-difference equation}
In this section, we apply our developed theory  in Section 2 to a  class of spatially heterogeneous integro-difference equations under the assumption of GB combination for two limiting
systems.  We also provide  an example to show that even if $c_-^*<0$ and $Q$ satisfies  {\bf (UAA)} at $-\infty$, the intermediate system can give rise to nontrivial fixed points.

In the below, we always assume that $f$ and $f_\pm$ satisfy the following conditions:
\begin{enumerate}
	\item[(B1)] $f\in C(\mathbb{R}\times\mathbb{R}_{+},\mathbb{R}_+), f(s,\cdot)\in C^{1}(\mathbb{R}_{+},\mathbb{R}), f(s,0)=0 \mbox{  for all }
	s\in \mathbb{R}$, $f(\mathbb{R}\times (0,\infty))\subseteq (0,\infty)$, $ f(s,u)$   is nondecreasing   in  $u\in (0,\infty)$;
	\item[(B2)]  $\lim\limits_{s\to \pm \infty}f(s,\cdot)=f_\pm(\cdot)$  in   $C_{loc}^{1}(\mathbb{R}_{+},\mathbb{R})$, $\frac{{\rm d} f_+(0)}{\rm d u}>1>\frac{{\rm d} f_-(0)}{\rm d u}$, $0\leq f_\pm(u)\leq \frac{{\rm d} f_\pm(0)}{\rm d u}u$ for all $u\in \mathbb{R}_+$, $\mbox{ and  } \{u>0:f_+(u)=u\}=\{u_+^*\} \mbox{ for some } u^{*}_+ >0$;
	\item[(B3)] There exists a sequence $\{u_k^*\}_{k\in \mathbb{N}}$ in $(0,\infty)$ such that $u_k^*<u_{k+1}^*$, $\lim\limits_{k\to\infty}u_k^*=\infty$, and $f(\mathbb{R}\times [0,u_k^*])\subseteq  [0,u_k^*]$ for all $k\in \mathbb{N}$.
\end{enumerate}

 Here we do not assume  that  $f(s,u)\leq \partial_u f(s,0)u$ for   all $(s,u)\in \mathbb{R}\times \mathbb{R}_+$.  We can easily verify the following  elementary result, which will be used  in our study of  the non-existence of fixed points, steady states and traveling waves.

\begin{lemma}\label{lem4.1} Assume that   one of the following conditions holds true:
	\begin{itemize}
		\item [{\rm (a)}]  $f(s,u)\leq \partial_u f(s,0) u$ for all $(s,u)\in \mathbb{R}\times \mathbb{R}_+$;
		
		\item [{\rm (b)}] $\liminf\limits_{(s,u)\to(-\infty,0)}\Big[\partial_u f(s,0)-\partial_u f(s,u)\Big]\geq 0$ and $ \liminf\limits_{s\to-\infty}\Big[\min \{\partial_u f(s,0)u-f(s,u):u\in [0,u_l^*]\}\Big]\geq 0$ for all $l\in\mathbb{N}$.
		
	\end{itemize}
	Then  for any $u^{**}>0$ and $\gamma>0$, there exist $\underline{R}^+=\underline{R}^+_{\gamma,u^{**}}\in C(\mathbb{R},\mathbb{R}_+)$ such that
	\begin{itemize}
		\item[{\rm (i)}]  $\underline{R}^+(-\infty)=\gamma+\limsup\limits_{s\to - \infty}\partial_u f(s,0)$ and $\underline{R}^+( \infty)=\gamma+\frac{{\rm d}f_+(0)}{{\rm d}u}$;

		\item[{\rm (ii)}] $f(s,u)\leq \underline{R}^+(s)u$ for all $(s,u)\in \mathbb{R}\times[0,u^{**}]$;

		\item[{\rm (iii)}] $\underline{R}^+(s)\leq \gamma+\frac{{\rm d} f_+(0)}{{\rm d}  u}$ for all $(s,u)\in \mathbb{R}\times[0,u^{**}]$ provided that $f(s,u)\leq \frac{{\rm d} f_+(0)}{{\rm d}  u}u$ for all $(s,u)\in \mathbb{R}\times[0,u^{**}]$.
	\end{itemize}
\end{lemma}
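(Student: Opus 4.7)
The plan is to set
\[
\underline{R}^+(s) := R(s) + \gamma, \qquad R(s) := \max_{u\in[0,u^{**}]} g(s,u),
\]
where $g(s,u) := f(s,u)/u$ for $u>0$ and $g(s,0) := \partial_u f(s,0)$. Since $f(s,0)=0$ and $f(s,\cdot)\in C^1$, the fundamental theorem of calculus gives the uniform representation $g(s,u) = \int_0^1 \partial_u f(s,tu)\,dt$, which is jointly continuous on $\mathbb{R}\times[0,u^{**}]$; the parametric maximum theorem then yields continuity of $R$, hence of $\underline{R}^+$. Conclusions (ii) and (iii) follow at once from the definition: $f(s,u) = g(s,u)\,u \le R(s)\,u \le \underline{R}^+(s)\,u$, and if the additional hypothesis in (iii) holds then $g(s,u)\le \frac{d f_+(0)}{du}$ pointwise, so $R(s)\le \frac{d f_+(0)}{du}$.

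For (i), the limit at $+\infty$ uses the $C^1_{\mathrm{loc}}$ convergence from (B2): $\partial_u f(s,\cdot) \to f_+'$ uniformly on $[0,u^{**}]$, so the integral representation forces $g(s,\cdot) \to g_+(\cdot)$ uniformly on $[0,u^{**}]$, where $g_+(u)=f_+(u)/u$ for $u>0$ and $g_+(0)=\frac{d f_+(0)}{du}$. Thus $R(s) \to \max_{u\in[0,u^{**}]} g_+(u)$, and by the bound $f_+(u)\le \frac{d f_+(0)}{du}\,u$ from (B2) this maximum equals $\frac{d f_+(0)}{du}$. For the limit at $-\infty$ under hypothesis (a), the pointwise bound $f(s,u)\le\partial_u f(s,0)\,u$ forces $g(s,u)\le g(s,0)$, hence $R(s)=\partial_u f(s,0)$, which tends to $\frac{d f_-(0)}{du}$ by the $C^1_{\mathrm{loc}}$ convergence.

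The main obstacle is the limit at $-\infty$ under hypothesis (b), where the two estimates must be balanced. Choose $l$ with $u^{**}\le u_l^*$ via (B3). Given $\varepsilon>0$, the first condition in (b) provides $\delta=\delta(\varepsilon)>0$ and $S_1$ such that $\partial_u f(s,u)\le \partial_u f(s,0)+\varepsilon/2$ whenever $s\le -S_1$ and $u\in[0,\delta]$; integrating in $u$ via the representation of $g$ yields $g(s,u)\le \partial_u f(s,0)+\varepsilon/2$ on $(0,\delta]$. With this $\delta$ now fixed, the second condition in (b) applied with tolerance $(\varepsilon\delta)/2$ produces $S_2$ such that $f(s,u)\le \partial_u f(s,0)\,u+(\varepsilon\delta)/2$ for all $u\in[0,u_l^*]$ and $s\le -S_2$; dividing by $u\ge\delta$ gives once more $g(s,u)\le\partial_u f(s,0)+\varepsilon/2$ on $[\delta,u^{**}]$. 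Taking the maximum across the two ranges and recalling the trivial lower bound $R(s)\ge g(s,0)=\partial_u f(s,0)$, we obtain $|R(s)-\partial_u f(s,0)|\le\varepsilon/2$ for $s$ sufficiently negative, and the convergence $\partial_u f(s,0)\to \frac{d f_-(0)}{du}$ closes the argument. Since the limsup in the statement of (i) coincides with the limit under (B2), this completes the proposal; every remaining step is routine continuity and limit bookkeeping.
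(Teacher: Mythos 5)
Your construction is a natural and correct one: the paper states Lemma~\ref{lem4.1} without proof (``we can easily verify the following elementary result''), so there is no canonical argument to compare against, but the plan of setting $\underline{R}^+=\gamma+\max_{u\in[0,u^{**}]}g(s,u)$ with $g(s,u)=f(s,u)/u$ (extended to $u=0$ by $\partial_u f(s,0)$) yields (ii) and (iii) trivially and reduces (i) to asymptotics of $R(s)=\max_u g(s,u)$. Your treatment of the limit at $+\infty$ (uniform $C^1_{loc}$ convergence plus $f_+(u)\le f_+'(0)u$), at $-\infty$ under (a) ($g(s,\cdot)$ is maximized at $u=0$), and especially the two-scale splitting at $-\infty$ under (b) — using the first condition on $(0,\delta]$ and the second with tolerance $\varepsilon\delta/2$ on $[\delta,u^{**}]$ — is careful and correct, and the reduction from $\limsup$ to $\lim$ under (B2) is the right observation.

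One small point deserves flagging: you assert that $g(s,u)=\int_0^1\partial_u f(s,tu)\,dt$ is \emph{jointly} continuous on $\mathbb{R}\times[0,u^{**}]$, which is what feeds the Berge maximum theorem to give continuity of $R$, hence $\underline{R}^+\in C(\mathbb{R},\mathbb{R}_+)$. Hypothesis (B1) only states $f\in C(\mathbb{R}\times\mathbb{R}_+)$ and $f(s,\cdot)\in C^1(\mathbb{R}_+)$ for each fixed $s$; it does not literally assert joint continuity of $\partial_u f$, which is what the integral representation needs to produce joint continuity of $g$ (away from $u=0$, $g=f/u$ is jointly continuous, but at $u=0$ one needs some uniformity in $s$ of the difference quotients). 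This is almost certainly the intended regularity in the paper (conditions (a), (b) and the $C^1_{loc}$ limit in (B2) all treat $\partial_u f$ as a reasonable two-variable function), and the asymptotic estimates you prove make it easy to replace $R$ by a continuous majorant if one wanted to be pedantic, so it is a cosmetic gap rather than a flaw in the approach. Everything else is sound.
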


Consider the integro-difference equation
		\begin {equation} \left\{
		\begin{array}{rcl}
			u_{n}(x) & = & \int_{\mathbb{R}}f(y,u_{n-1}(y))\mathfrak{k}(x-y){\rm{d}} y,
			\\
			u_{0}&\in & BC(\mathbb{R},\mathbb{R}_+),
		\end{array} \right.
		\label{6.1-1}
		\end {equation}
		where $\mathfrak{k}:\mathbb{R}\rightarrow \mathbb{R}$ is a nonnegative
		continuous function satisfying  $\int_{\mathbb{R}}\mathfrak{k}(y){\rm{d}} y=1$. As in \cite{hz2008,lbbf2016,lfm2015}, we assume that $\int_{\mathbb{R}}e^{\mu_\pm
			y}\mathfrak{k}(y){\rm{d}} y<\infty$ for some  $(\mu_+,\mu_-)\in (0,\infty)\times (-\infty,0)$.
		
		Let $M=\{0\}$, $C=BC(\mathbb{R},\mathbb{R})$, $C_+=BC(\mathbb{R},\mathbb{R}_+)$, and $C_\phi=C_+\cap (\phi-C_+)$ for any $\phi\in C_+$.
		Clearly, we can  identify $\mathbb{R}$ and $M\times \mathbb{R}$  to apply the developed theory
		 directly  to system \eqref{6.1-1}.  Let
		\begin {equation}
		c^{*}_{\pm}(\infty):=\inf\limits_{\mu>0}\frac{\ln\Big[\frac{{\rm d} f_+(0)}{{\rm d} u}\int_{\mathbb{R}}e^{\pm\mu
				y}\mathfrak{k}(y){\rm{d}} y\Big]}{\mu}.
		\label{6.1-2}
		\end {equation}
		Define
		$$
		Q[\phi;f(\cdot,\cdot)](x)=\int_{\mathbb{R}}f(x-y,\phi(x-y))\mathfrak{k}(y){\rm{d}} y,\,\,\, \forall \phi\in
		C_+,\,\,x\in \mathbb{R},
		$$
		and
		$$
		Q_\pm[\phi;f_\pm(\cdot)](x):=Q[\phi;f_\pm(\cdot)](x)=\int_{\mathbb{R}}f_\pm(\phi(x-y))\mathfrak{k}(y){\rm{d}} y,\,\,\, \forall \phi\in
		C_+,\,\,x\in \mathbb{R}.
		$$
		
			\begin{prop} \label{prop4.3} The following statements are valid:
			\begin{itemize}
				\item [{\rm (i)}]  $Q[C_{u_k^*};f(\cdot,\cdot)]\subseteq C_{u_k^*}$, and $Q[C_{u_k^*};f(\cdot,\cdot)]$ is precompact in $C$ for all $k \in \mathbb{N}$.
				
				\item [{\rm (ii)}]  If $\phi\in C_+$ and there exist two intervals $[a,b]$ and $[c,d]$ in $\mathbb{R}$ such that  $\phi([a,b]), f([a,b]\times (0,\infty)), \mathfrak{k}([c,d])\subseteq (0,\infty)$, then  $Q[\phi;f(\cdot,\cdot)](x)>0$ for all $x\in [a+c,b+d]$ .
				Hence, {\bf {(SP)}} and {\bf {(SP$_-$)}} hold provided that $f(\mathbb{R}\times (0,\infty))\subseteq (0,\infty)$, $\mathfrak{k}(0,\infty)\neq\{0\}$, and
				$\mathfrak{k}(-\infty,0)\neq\{0\}$.
				
				\item [{\rm (iii)}]  If $\lim\limits_{s\to  \pm \infty}f(s,\cdot)= f_\pm$ in $L^\infty_{loc}(\mathbb{R}_+,\mathbb{R})$, then $\lim\limits_{y\to  \pm \infty}Q^n[T_{y}[\phi];f(\cdot,\cdot)](\cdot,\cdot+y)= (Q_\pm)^n[\phi;f_\pm(\cdot)]$ in $C$ for all $n\in \mathbb{N}$. Moreover, $\lim\limits_{(k,y)\to  (\infty, \infty)}Q^n[T_{\pm y}[\phi_k];f(\cdot,\cdot)](\cdot,\cdot\pm y)= (Q_\pm)^n[\phi;f_\pm(\cdot)]$ in $C$  for all $n\in \mathbb{N}$ provided that $\{\varphi_k\}_{k\in \mathbb{N}\cup\{0\}}\in C_r$ for some  $r>0$ and $\lim\limits_{k\to \infty}||\varphi_k-\varphi_0||=0$.
					\item [{\rm (iv)}]  If $f(s,u)$ is nondecreasing in $u\in \mathbb{R}_+$ for all $s\in \mathbb{R}$, then  $Q[\phi;f(\cdot,\cdot)]\geq Q[\psi;f(\cdot,\cdot)]$ for all $\phi,\psi\in C_+$ with $\phi\geq \psi$.
				
				\item [{\rm (v)}]  If $f(s,u)$ is nondecreasing in $s\in \mathbb{R}$ for all $u\in \mathbb{R}_+$, then  $T_{-y}[Q[T_y[\phi];f(\cdot,\cdot)]]\geq Q[\phi;f(\cdot,\cdot)]$ for all $(\phi,y)\in C_+\times \mathbb{R}_+$.
				
				\item [{\rm (vi)}]  If $f(s,\alpha u)\geq \alpha f(s,u)$ for all $(s,u,\alpha)\in \mathbb{R}\times \mathbb{R}_+\times [0,1]$, then  $Q[\alpha \phi;f(\cdot,\cdot)]\geq \alpha Q[\phi;f(\cdot,\cdot)]$ for all $(\phi,\alpha)\in C_+\times[0,1]$.
			\end{itemize}
			\end{prop}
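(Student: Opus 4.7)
Parts (iv)--(vi) are immediate: each asserted inequality on $Q$ follows by passing the corresponding pointwise hypothesis on $f$ (monotonicity in $u$, monotonicity in $s$, and the concavity-type bound $f(s,\alpha u)\ge \alpha f(s,u)$, respectively) under the integral defining $Q$. For (i), the invariance $Q[C_{u_k^*};f]\subseteq C_{u_k^*}$ is immediate from (B3) together with $\int\mathfrak{k}=1$. To get precompactness in the compact-open topology induced by $\|\cdot\|_X$, I would apply Arzel\`a--Ascoli on each interval $[-R,R]$: uniform boundedness by $u_k^*$ is clear, and using the change-of-variables identity $Q[\phi](x)=\int f(u,\phi(u))\mathfrak{k}(x-u)\,du$ one gets
$$|Q[\phi](x_1)-Q[\phi](x_2)|\le u_k^*\int_{\mathbb{R}}|\mathfrak{k}(v+(x_1-x_2))-\mathfrak{k}(v)|\,dv,$$
whose right-hand side tends to $0$ as $x_1-x_2\to 0$ by $L^1$-continuity of translations, uniformly in $\phi\in C_{u_k^*}$.

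For (ii), when $x\in[a+c,b+d]$ the set $\{y\in[c,d]:x-y\in[a,b]\}$ is a non-degenerate interval on which the three factors $\phi(x-y)$, $f(x-y,\phi(x-y))$ and $\mathfrak{k}(y)$ are all strictly positive, giving $Q[\phi](x)>0$. To deduce \textbf{(SP)}, I would start from $\phi\in C_+\setminus\{0\}$ with $\phi(a)>0$: by continuity $\phi>0$ on some $(a-\delta,a+\delta)$, and the assumption $\mathfrak{k}(0,\infty)\ne\{0\}$ yields, via continuity of $\mathfrak{k}$, an interval $[c_0,d_0]\subset(0,\infty)$ on which $\mathfrak{k}>0$. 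Applying the first part of (ii) inductively, $Q^n[\phi]>0$ on $[a-\delta+nc_0,a+\delta+nd_0]\supseteq[a+n c_0,a+n d_0]$, so \textbf{(SP)} holds with $\rho^*=c_0$, $\varrho^*=d_0$, $N^*=1$; the analog \textbf{(SP$_-$)} is proved by the mirrored argument, using a positivity interval for $\mathfrak{k}$ inside $(-\infty,0)$.

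For (iii), the identity $T_{-y}\circ Q\circ T_y[\phi](x)=\int_{\mathbb{R}} f(x+y-s,\phi(x-s))\mathfrak{k}(s)\,ds$ shows that the $y$-dependence enters only through the first slot of $f$. As $y\to\infty$, $f(x+y-s,\cdot)\to f_+(\cdot)$ uniformly on $[0,r]$ by the $L^{\infty}_{loc}$ convergence stipulated in (iii), and this upgrades to uniformity in $(x,s)$ on compacta. Dominated convergence (with dominating function controlled by $L^{1}$-integrability of $\mathfrak{k}$ and boundedness of $\phi$) then yields convergence in the compact-open topology, i.e.\ in $C$; the case $n\ge 2$ follows by induction on $n$, invoking continuity of $Q_+$ on bounded invariant sets. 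For the joint-limit statement with $\varphi_k\to\varphi_0$ in $C_r$, I would insert $T_{-y}\circ Q^n\circ T_y[\varphi_0]$ and estimate the two resulting differences by the triangle inequality: the second is precisely the $y$-limit just handled, and the first is uniform in $y$ because the $\varphi$-argument of $T_{-y}\circ Q\circ T_y[\varphi](x)$ is $\varphi(x-s)$, which is independent of $y$.

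The main technical point I expect difficulty with is part (iii): ensuring that the uniformities are genuine---both the uniform convergence $f(x+y-s,u)\to f_+(u)$ on compact $(s,u)$-ranges and the tail control of the $s$-integral via the integrability of $\mathfrak{k}$. The decoupling of the $\varphi$-dependence from the shift $y$ is exactly what makes the triangle-inequality estimate in the joint-limit statement close uniformly in $y$, and it is what one should keep one's eye on throughout.
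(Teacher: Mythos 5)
The paper states Proposition~\ref{prop4.3} without proof, treating each item as routine verification, so there is no argument in the paper to compare against; your proposal supplies a proof where the authors chose not to. The approach you take is the natural one, and it is essentially correct: the $L^1$-translation-continuity argument for equicontinuity in (i), the convolution-support argument for (ii), and the uniform-on-compacta convergence plus induction for (iii) are all the right tools, and (iv)--(vi) do indeed follow by passing the pointwise hypothesis under the integral.

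One small imprecision is worth flagging in (ii). The set $\{y\in[c,d]:x-y\in[a,b]\}$ is the intersection $[c,d]\cap[x-b,x-a]$, which at the endpoints $x=a+c$ and $x=b+d$ degenerates to a single point (for instance, at $x=a+c$ it is $\{c\}$). So the integrand is not a priori supported on a non-degenerate subinterval of $[c,d]$ in those boundary cases, and the argument as stated does not directly yield $Q[\phi;f](a+c)>0$. The fix is easy: since $\mathfrak{k}$, $\phi$, and $f$ are continuous and strictly positive at the relevant point, there is $\varepsilon>0$ with $\mathfrak{k}>0$ on $[c-\varepsilon,c]$ while $a+c-y\in[a,a+\varepsilon]\subseteq[a,b]$, so the integrand is positive on $[c-\varepsilon,c]$, giving $Q[\phi;f](a+c)\ge\int_{c-\varepsilon}^{c}f(a+c-y,\phi(a+c-y))\mathfrak{k}(y)\,dy>0$. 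The same remark applies at $x=b+d$. You should also make explicit, for the $n\ge 2$ step in your inductive argument for \textbf{(SP)}, that the intervals $[a-\delta+nc_0,\,a+\delta+nd_0]$ on which $Q^n[\phi]>0$ contain $[a+nc_0,\,a+nd_0]$, which you use implicitly.

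In (iii), the uniform continuity of $f(s,\cdot)$ in $u$ \emph{uniformly in} $s$ is used tacitly in the triangle-inequality step and in the inductive step for $n\ge 2$; it does hold here, but only because (B1) gives joint continuity of $f$ on compacta while (B2) gives the $C^1_{loc}$-limits $f_\pm$ for $|s|$ large, and these together give a modulus of continuity in $u$ that is uniform in $s\in\mathbb{R}$ on each range $[0,r]$. Since you flagged this as the technical point to watch, you have the right instinct; spelling out the compact-plus-tail split on the $s$-line is what makes the ``decoupling'' remark close the argument.
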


\begin{prop} \label{prop4.4} The following statements are valid:
			\begin{itemize}
				\item [{\rm (i)}] $Q[\cdot;f(\cdot,\cdot)]$ satisfies  {\bf (UC)} with $c^*_+( \infty)+c^*_-( \infty)>0$.
				$T_{y}\circ Q_{-l}[\varphi]\geq Q_{-l}\circ T_{y}[\varphi]$ for all $l\in \mathbb{N}$ and $(y,\varphi)\in \mathbb{R}_+\times C_+$ with $ly\geq 0$,  where $Q_l$ is defined as in  {\bf (ACH$_-$)} if (B1$_-$) or (B3$_-$) hold.
					
				\item [{\rm (ii)}]  $(Q[\cdot;f(\cdot,\cdot)],Q[\cdot;f_-])$ satisfies {\bf (GUAA$_-$)}
				
				\item [{\rm (iii)}]  $Q[\cdot;f(\cdot,\cdot)]$ satisfies {\bf (GLC)} with $c_+^*(\infty)$.
				
					\item [{\rm (iv)}]  If $c^*_+( \infty)\leq 0$, then there exists $\gamma\in (0,\frac{{\rm d} f_+(0)}{{\rm d} u}-1)$  such that
			 $Q[\varphi;f(\cdot,\cdot)]\geq Q[\varphi;f_{\gamma,u_1^*}(\cdot,\cdot)], \forall \varphi\in C_{u_1^*}$
			 and
			 $\inf\limits_{\mu>0}\frac{\ln\Big[(\frac{{\rm d} f_+(0)}{{\rm d} u}-\gamma)\int_{\mathbb{R}}e^{\pm\mu y}\mathfrak{k}(y){\rm{d}} y\Big]}{\mu}>0$, where $f_{\gamma,u_1^*}$ is defined as in Lemma 5.1 in~\cite{yz2025}.
		\end{itemize}
		\end{prop}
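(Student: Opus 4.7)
The plan is to verify (i)--(iv) one at a time by exhibiting explicit dominating linear operators and limiting nonlinearities, and appealing to classical spreading theory for integro-difference equations together with the auxiliary constructions from Lemma~\ref{lem4.1} and Proposition~\ref{prop4.3}.

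For (i), note that $Q_+[\phi;f_+]=\int f_+(\phi(\cdot-y))\mathfrak{k}(y)\,dy$ is the standard translation-invariant integro-difference operator with monotone $f_+$ satisfying $f_+(u)\le\frac{{\rm d}f_+(0)}{{\rm d}u}u$ and $\frac{{\rm d}f_+(0)}{{\rm d}u}>1$ by (B2). The Weinberger--Lui--Liang--Zhao spreading theory (cf.\ \cite{lz2007,w1982}) yields upward convergence of $Q_+^n[\varphi]$ to $r^*=u_+^*$ uniformly on $n[-c_-^*(\infty)+\varepsilon,c_+^*(\infty)-\varepsilon]$ for each $\varphi\in C_+\setminus\{0\}$, which is exactly (UC) with the speeds in \eqref{6.1-2}; Proposition~\ref{prop4.3}(iii) identifies $Q_+$ as the limiting operator of $Q$ at $+\infty$. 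The positivity $c_+^*(\infty)+c_-^*(\infty)>0$ follows from the Cauchy--Schwarz inequality $\int e^{\mu y}\mathfrak{k}(y)\,dy\cdot\int e^{-\mu y}\mathfrak{k}(y)\,dy\ge 1$ combined with $\frac{{\rm d}f_+(0)}{{\rm d}u}>1$ and the nondegeneracy of the continuous kernel. The translational monotonicity clause for $Q_{-l}$ follows from Proposition~\ref{prop4.3}(v) applied to the spatially monotone truncated nonlinearity used in the ACH$_-$ construction under (B1$_-$) or (B3$_-$).

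For (ii), take $r_l^{**}=u_l^*$, $\widehat{Q}_l=Q[\cdot;f]$, and $\widehat{Q}_l^+=Q[\cdot;f_-]=:Q_-$. The invariance $Q[C_{u_l^*}]\subseteq C_{u_l^*}$ and the ordering $Q[\varphi]\le Q[\psi]\le Q[u_l^*]\le u_l^*$ on $C_{u_l^*}$ come from Proposition~\ref{prop4.3}(i),(iv) together with (B3). Annihilation follows from $f_-(u)\le\frac{{\rm d}f_-(0)}{{\rm d}u}u$ with $\frac{{\rm d}f_-(0)}{{\rm d}u}<1$, giving $Q_-^n[u_l^*]\le\bigl(\frac{{\rm d}f_-(0)}{{\rm d}u}\bigr)^n u_l^*\to 0$. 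The liminf condition is immediate from Proposition~\ref{prop4.3}(iii): $T_{-y}\circ Q^n\circ T_y[u_l^*]\to Q_-^n[u_l^*]$ in $C$ as $y\to-\infty$, so the infimum over $\theta\in M=\{0\}$ of the difference converges to $0\in\mathbb{R}_+$.

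For (iii), for each $k$ apply Lemma~\ref{lem4.1} with $u^{**}=u_k^*$ and a parameter $\gamma_k\downarrow 0$ to produce $\underline{R}_k^+\in C(\mathbb{R},\mathbb{R}_+)$ satisfying $f(s,u)\le\underline{R}_k^+(s)u$ on $\mathbb{R}\times[0,u_k^*]$ and $\underline{R}_k^+(\infty)=\gamma_k+\frac{{\rm d}f_+(0)}{{\rm d}u}$. Define the linear operators
\begin{equation*}
L_k[\varphi](x)=\int_{\mathbb{R}}\underline{R}_k^+(z)\varphi(z)\mathfrak{k}(x-z)\,dz,\qquad L_{k,+}[\varphi](x)=\Bigl(\gamma_k+\frac{{\rm d}f_+(0)}{{\rm d}u}\Bigr)\int_{\mathbb{R}}\varphi(z)\mathfrak{k}(x-z)\,dz.
\end{equation*}
Then $Q[\varphi;f]\le L_k[\varphi]$ on $C_{u_k^*}$, and $L_{k,+}$ is translation-invariant with constant principal eigenfunction $\zeta_\mu\equiv 1$, yielding $c^*_{L_k}\to c_+^*(\infty)$ as $\gamma_k\downarrow 0$. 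To verify (LC), the key ratio
\begin{equation*}
\frac{L_k[e_{1,\mu}](x)}{L_{k,+}[e_{1,\mu}](x)}=\frac{\int\underline{R}_k^+(x-w)e^{\mu w}\mathfrak{k}(w)\,dw}{\bigl(\gamma_k+\frac{{\rm d}f_+(0)}{{\rm d}u}\bigr)\int e^{\mu w}\mathfrak{k}(w)\,dw}\longrightarrow 1\quad\text{as }x\to\infty
\end{equation*}
by dominated convergence (using the exponential moment $\int e^{\mu y}\mathfrak{k}(y)\,dy<\infty$ and uniform boundedness of $\underline{R}_k^+$), so the ratio is $\le 1+\mu\epsilon$ for $x\ge\mathfrak{x}_{\epsilon,\mu}$.

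For (iv), invoke Lemma~5.1 of \cite{yz2025} to construct $f_{\gamma,u_1^*}\le f$ on $\mathbb{R}\times[0,u_1^*]$ with effective linearized rate $\frac{{\rm d}f_+(0)}{{\rm d}u}-\gamma$, which yields the first inequality. Since $\frac{{\rm d}f_+(0)}{{\rm d}u}>1$ by (B2), choose $\gamma>0$ small enough that $\frac{{\rm d}f_+(0)}{{\rm d}u}-\gamma>1$ and appeal to the continuity of $\gamma\mapsto\inf_{\mu>0}\frac{\ln[(\frac{{\rm d}f_+(0)}{{\rm d}u}-\gamma)\int e^{\pm\mu y}\mathfrak{k}(y)\,dy]}{\mu}$ together with the structure of the Lemma~5.1 construction to obtain strict positivity of the infimum for all sufficiently small $\gamma$. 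The main obstacle will be the (LC) verification in (iii), where extracting the quantitative $(1+\mu\epsilon)$-bound from the asymptotic identity $\underline{R}_k^+(\infty)=\gamma_k+\frac{{\rm d}f_+(0)}{{\rm d}u}$ requires careful uniform estimates via dominated convergence against the exponentially-weighted kernel; a secondary delicate point is the compatibility of the Lemma~5.1 construction in (iv) with the bilateral positivity required by the $\pm$ formula.
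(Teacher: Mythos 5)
Your overall strategy for (i), (ii), and (iv) matches the paper's: cite the Hsu--Zhao theory for (UC), use $f_-(u)\le f'_-(0)u<u$ and Proposition~\ref{prop4.3}-(iii) for (GUAA$_-$), and invoke Lemma~5.1 of~\cite{yz2025} for (iv). However, there is a genuine gap in your proof of (iii). You construct the dominating multiplier $\underline{R}_k^+$ by applying Lemma~\ref{lem4.1}. But Lemma~\ref{lem4.1} is stated \emph{under the additional hypothesis} that either condition (a) or (b) holds; these are not among the standing assumptions (B1)--(B3), and Proposition~\ref{prop4.4} does not assume them (they only enter later, in the non-existence part of Theorem~\ref{thm4.1}-(iii)). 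The paper instead invokes $\overline{R}_{1/l,u_l^*}$ from Lemma~5.2 of~\cite{yz2025}, which supplies a bounded nonincreasing majorant with $\overline{R}_{1/l,u_l^*}(\infty)=f'_+(0)+1/l$ without requiring (a) or (b); that is all that the {\bf (LC)} verification needs, since {\bf (LC)} only constrains the behavior at $+\infty$. Your dominated-convergence verification of {\bf (LC)} and the conclusion $c^*_{L_k}\to c^*_+(\infty)$ are correct once the right majorant is in hand, but as written you have smuggled in a hypothesis you are not entitled to.

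Two smaller points of rigor. In (i), the Cauchy--Schwarz inequality gives $\Lambda_+(\mu)+\Lambda_-(\mu)\ge 2\ln f'_+(0)>0$ pointwise in $\mu$, but that does not by itself bound $\inf_\mu\Lambda_+(\mu)/\mu + \inf_\nu\Lambda_-(\nu)/\nu$ from below; the standard way to conclude $c_+^*(\infty)+c_-^*(\infty)>0$ is the convexity/tangent-line argument, and the paper simply cites Theorem~2.1-(ii) of~\cite{hz2008}, which packages it. In (iv), your criterion ``choose $\gamma$ small so $f'_+(0)-\gamma>1$'' is not the relevant one (indeed when $c^*_+(\infty)\le 0$ the $+$-sided infimum remains $\le 0$ for every $\gamma>0$); what you actually need, and what the paper records as $c^*_-(\infty)>-c^*_+(\infty)\ge 0$, is that the \emph{minus}-sided infimum is strictly positive at $\gamma=0$ and stays positive for $\gamma$ small, which follows from the strict inequality and a short argument (the $\gamma$-dependence is monotone and continuous uniformly on $\mu$ bounded away from $0$, while for small $\mu$ the expression is large and positive).
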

		
		\noindent
		\begin{proof} 			
			(i)  Clearly,  {\bf (UC)} follows from \cite[Theorem 2.1-(ii)]{hz2008}.
		
		(ii)
 By virtue of $ f_-(u)< u  \mbox{  for all  } u\in (0,\infty)$, we easily see that
			$$
			(f_-)^{1+n}([0,r])\subseteq (f_-)^n([0,r]) \mbox{ and }\bigcap\limits_{n\in \mathbb{N}}(f_-)^n([0,r])=\{0\},
			\quad \forall (n,r)\in \mathbb{N}\times (0,\infty).
			$$
			According to the definition of $Q[\cdot;f_-]$, we can verify that $Q[C_r;f_-]\subseteq C(\mathbb{R},f_-([0,r]))$, and hence, $Q^n[C_r;f_-]\subseteq C(\mathbb{R},(f_-)^n([0,r]))$ for all $(n,r)\in \mathbb{N}\times (0,\infty)$. This, together with Proposition~\ref{prop4.3}-(iii),  implies statement (iii).
			
			(iii) For any  $l\in \mathbb{N}$, let us denote $R_{l}=\overline{R}_{\frac{1}{l},u_l^*}$, where $\overline{R}_{\frac{1}{l},u_l^*}$ is defined as in Lemma 5.2 in~\cite{yz2025}. Then $R_{l}$ is bounded and nonincreasing with $R_{l}(-\infty)>R_{l}(\infty)=\frac{\partial f_+(0)}{{\partial } u}+\frac{1}{l}$ and $f(s,u)\leq R_{l}(s)u$ for all $(s,u)\in \mathbb{R}\times [0,u_l^*]$. 	Let
			$$
			e_{\phi,\mu}(x)=\phi(x) e^{-\mu x}, \, \, \,  \forall (\phi,\mu,x)\in C\times \mathbb{R}\times \mathbb{R}, \quad
			 \tilde{C}:= \left\{\sum\limits_{k=1}^p  e_{\phi_k,\mu_k}:(\phi_k,\mu_k,p)\in  C\times\mathbb{R}\times \mathbb{N} \right\}.
			 $$
			Define
			$$
			L_{l}[\phi](x)=\int_{ \mathbb{R}}\phi(x-y) R_{l}(x-y)
			\mathfrak{k}(y) {\rm d} y \mbox{ and } (L_{l})_+[\phi](x)=R_{l}(\infty)\int_{ \mathbb{R}}\phi(x-y)
			\mathfrak{k}(y) {\rm d} y,\, \, \,  \forall (x,\phi)\in \mathbb{R}\times \tilde{C}.
			$$
			It then easily follows that  $(L_{l},(L_{l})_+)$ satisfies {\bf(LC)}, $Q\leq L_{l}$ in $C_{u_l^*}$, and  $\lim\limits_{l\to \infty}c^*_{(L_{l})_+}=c_+^*(\infty)$ for all $l\in \mathbb{N}$.
			
			(iv) follows from $c^*_-( \infty)>-c^*_+( \infty)\geq 0$ and \cite[Lemma 5.1]{yz2025}.
			\end{proof}
		
		To simplify our restriction on initial values, in the rest of  this section we  always assume that $\mathfrak{k}(0,\infty)\neq\{0\}$,
			$\mathfrak{k}(-\infty,0)\neq\{0\}$, and  $f(\mathbb{R}\times (0, \infty))\subseteq (0,\infty)$. For example, if $f((-\infty,0) \times (0, \infty))=\{0\}$ and $\phi|_{\mathbb{R}_+}=0$, then $Q[\phi]=0$, which contradicts the assumptions {\bf(SP)} and (SP$_-$).
			However, if we replace $C_+\setminus\{0\}$ with  the set of
			functions having  (SP)/(SP$_-$)-type features, we can  slightly adapt the developed  theory to obtain the same conclusions.
			Here we should point out that the
			authors of \cite{lbbf2016,lfm2015} considered the case where  the initial values $\phi>0$ and  $f(x,u)>0$ for all $u\in (0,\infty)$.
			
		Now we are ready to present the main result for system ~\eqref{6.1-1}.

			\begin{thm}\label{thm4.1}
			  Let  $c^*_\pm( \infty)$ be defined as in ~\eqref{6.1-2}. Then the following statements are valid:
			\begin{itemize}
				\item [{\rm (i)}]
				If $\phi\in C_+\setminus \{0\}$, $c_+^*(\infty)>0$ and $\varepsilon\in (0,\frac{1}{2}\min\{c_+^*(\infty),c_+^*(\infty)+c_-^*(\infty)\})$, then $$\lim\limits_{n\rightarrow \infty}
				\Big[\sup\{|u_n^\phi( x)-u_+^*\cdot {\bf {1}}_{\mathbb{R}_+}(x)|:x\in n \mathcal{A}_{\varepsilon}^+\}\Big]= 0,$$ where $\mathcal{A}_{\varepsilon}^+=(-\infty,-\varepsilon]\bigcup [\max\{\varepsilon,-c_-^*(\infty)+\varepsilon\}, (c_+^*(\infty)-\varepsilon)]$. If,  in addition,  $c_{-}^*(\infty)>0$, then $$\lim\limits_{\alpha\rightarrow \infty}\Big[\sup\{|u_n^\phi(x)-u_+^*\cdot {\bf {1}}_{\mathbb{R}_+}(x)|:(n,x)\in \mathcal{A}_{\alpha,\varepsilon}^+\}\Big]= 0,$$ where  $\mathcal{A}_{\alpha,\varepsilon}^+=\{(n,x)\in \mathbb{N}\times \mathbb{R}:n\geq \alpha \mbox{ and } x \in [\alpha, n(c_+^*(\infty)-\varepsilon)] \bigcup (-\infty,-\alpha]\}$.
				
				\item [{\rm (ii)}]
				If $\varepsilon\in (0,\infty)$, and $\phi\in C_+$ satisfies that $\phi(\cdot,x)$ is zero for all sufficiently positive $x$, then $$\lim\limits_{n\rightarrow \infty}
				\Big[\max\{|u_n^\phi(x)|:x\geq n \max\{\varepsilon,c_+^*(\infty)+\varepsilon\} \}\Big]= 0.$$
				Moreover, if $c_{+}^*(\infty)<0$, then $\lim\limits_{\alpha\rightarrow \infty}\Big[\sup\{|u_n^\phi(x)|:n\in \mathbb{N} \mbox{ and } x\geq \alpha \}\Big]=0$.

				\item [{\rm (iii)}] If $c_-^*(\infty)>0$, then  $Q$ has a  nontrivial  fixed point $W$ in $C_+$ such that
				$W(\infty)=u_+^*$ and $W(-\infty)=0$.
				If $c_-^*(\infty)< 0$,  $\limsup\limits_{s \to -\infty}\partial_u f(s,0)<1$, either (a) or (b) in Lemma~\ref{lem4.1} holds, and $f(s,u)\leq \frac{{\rm d}  f_+(0)}{{\rm d} u}u$ for all $(s,u)\in \mathbb{R}\times \mathbb{R}_+$, then $Q$ has no  nontrivial  fixed point $W$ in $C_+$.
				
				\item [{\rm (iv)}] Assume that  $\limsup\limits_{s \to -\infty}\partial_u f(s,0)<1$,  $f(x,\cdot)$ is  nondecreasing and strictly subhomogeneous on $(0,\infty)$ for each  $x\in (0,\infty)$. If $\min\{c_-^*(\infty),c_+^*(\infty)\}>0$, then $$\lim\limits_{n\rightarrow \infty}\Big[\sup\{|u_n^\phi(x)-W(x)|: x \leq n(c_+^*(\infty)-\varepsilon)\}\Big]= 0$$ for all $(\varepsilon,\varphi)\in (0,c_+^*(\infty))\times C_+\setminus\{0\}$, where $W$ is defined as in (iii).
				
			\end{itemize}
			
		\end{thm}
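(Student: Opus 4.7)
All four parts will follow by verifying the abstract hypotheses of Section~2 on $Q[\cdot;f(\cdot,\cdot)]$ via Propositions~\ref{prop4.3}--\ref{prop4.4} and then invoking the corresponding abstract theorem. The common setup is $M=\{0\}$, $r^*=u_+^*$, $\phi_k^*=u_k^*$: Propositions~\ref{prop4.3}-(i)(ii) yield \textbf{(UB)} and \textbf{(SP)}, Proposition~\ref{prop4.4}-(i) yields \textbf{(UC)} with $c_\pm^*=c_\pm^*(\infty)$, Proposition~\ref{prop4.4}-(ii) yields \textbf{(GUAA$_-$)}, and Proposition~\ref{prop4.4}-(iii) yields \textbf{(GLC)} with $\liminf_k c^*_{L_k}=c_+^*(\infty)$. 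Parts (i) and (ii) then follow by direct applications of Theorem~\ref{thm2.1} (branches (ii) and (i), respectively) and Theorem~\ref{thm2.2} (branches (i) and (ii), according to the sign of $c_+^*(\infty)$).

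For the existence in (iii), the orbit $\{Q^n[u_k^*]\}_{n\in\mathbb{N}}$ is precompact in $C$ by Proposition~\ref{prop4.3}-(i). When $c_+^*(\infty)>0$, I would apply Theorem~\ref{thm2.3} directly with $\phi_1^*=u_k^*$ chosen so that $u_k^*\geq u_+^*$. When $c_+^*(\infty)\leq 0$, Proposition~\ref{prop4.4}-(iv) supplies a lower map $\underline{Q}[\cdot]=Q[\cdot;f_{\gamma,u_1^*}]$ whose leftward and rightward speeds are both strictly positive; Theorem~\ref{thm2.3} applied to $\underline{Q}$ produces a fixed point $\underline{W}$ connecting $0$ to $u_+^*$, and the monotone, bounded iteration $Q^n[\underline{W}]\geq \underline{W}$ converges in $C$, by compactness, to a fixed point $W$ of $Q$ dominating $\underline{W}$, which therefore inherits $W(\infty)=u_+^*$; the tail $W(-\infty)=0$ is then recovered by applying the annihilation argument inside Theorem~\ref{thm2.1} to $W=Q^n[W]$.

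For the non-existence in (iii), I would take the translation-invariant upper linearization $\underline{\mathcal{L}_+}[\varphi](x)=\frac{{\rm d} f_+(0)}{{\rm d} u}\int_{\mathbb{R}}\varphi(x-y)\mathfrak{k}(y)\,{\rm d} y$, which dominates $Q$ on $C_+$ by the hypothesis $f(s,u)\leq \frac{{\rm d} f_+(0)}{{\rm d} u}u$ and whose left speed $c^*_-(\underline{\mathcal{L}_+})$ coincides with $c_-^*(\infty)<0$. Verification of the growth bound $\limsup_{x\to-\infty}|W(x)|e^{-\mu_\varepsilon x}<\infty$ required by Theorem~\ref{thm2.4} is where Lemma~\ref{lem4.1} enters: the assumption $\limsup_{s\to-\infty}\partial_u f(s,0)<1$ together with alternative (a) or (b) yields $f(s,u)\leq \underline{R}^+(s)u$ with $\underline{R}^+(-\infty)<1$, and iterating the identity $W(x)=\int \underline{R}^+(x-y)W(x-y)\mathfrak{k}(y)\,{\rm d} y$ on $(-\infty,-A]$ for large $A$ yields exponential decay of $W$ at $-\infty$ at some positive rate $\mu_0>0$; choosing $\varepsilon$ small enough that $\mu_\varepsilon<\mu_0$ delivers the bound, and Theorem~\ref{thm2.4} then forces $W\equiv 0$.

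For part (iv), I would first establish \textbf{(GAS)} for $Q$ on $C_+$: monotonicity and strong positivity (Proposition~\ref{prop4.3}-(ii)), orbit precompactness in $C$ (Proposition~\ref{prop4.3}-(i)), strict subhomogeneity transmitted through Proposition~\ref{prop4.3}-(vi), together with the instability of $0$ forced by $c_+^*(\infty)>0$ through part~(i), place us in the setting of the standard monotone-subhomogeneous semiflow argument, adapted to the compact-open topology on $C$, which identifies the $W$ of part~(iii) as the unique nontrivial fixed point globally attracting $C_+\setminus\{0\}$ in $C$; Theorem~\ref{thm2.5} then delivers the claimed uniform convergence on $\{x\leq n(c_+^*(\infty)-\varepsilon)\}$. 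The hardest step is the nonexistence half of (iii): one must weave together two distinct linearizations — Lemma~\ref{lem4.1} for the tail decay of $W$ and $\underline{\mathcal{L}_+}$ for the spectral contradiction — and ensure that the decay rate $\mu_0$ of $W$ at $-\infty$ is compatible with the admissible exponent $\mu_\varepsilon$ in Theorem~\ref{thm2.4}. Establishing \textbf{(GAS)} is a close second, since the monotone-subhomogeneous theory must be transported from the compact ordered Banach space setting to the locally uniform framework of $BC(\mathbb{R},\mathbb{R})$.
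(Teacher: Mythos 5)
Your reduction to the abstract machinery of Section~2 is exactly the one the paper uses: parts (i) and (ii) follow from Theorems~\ref{thm2.1} and~\ref{thm2.2} via Propositions~\ref{prop4.3}--\ref{prop4.4}, and the existence half of (iii) is Theorem~\ref{thm2.3} (the paper dispatches it in one line, without splitting on the sign of $c_+^*(\infty)$, since $c_-^*(\infty)>0$ is all that Theorem~\ref{thm2.3} needs together with Proposition~\ref{prop4.4}). So for those parts you match the paper.

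The gap is in the nonexistence half of (iii). Theorem~\ref{thm2.4} requires the \emph{exponential} bound $\limsup_{x\to-\infty}W(x)e^{-\mu_\varepsilon x}<\infty$, but a priori the fixed point $W$ is only known to tend to $0$ pointwise at $-\infty$ (from part (i)). Your proposal asserts that ``iterating the identity $W(x)=\int\underline{R}^+(x-y)W(x-y)\mathfrak{k}(y)\,{\rm d}y$ on $(-\infty,-A]$ yields exponential decay at some rate $\mu_0>0$,'' but this is not a proof: the relation is an inequality, not an identity; the kernel $\mathfrak{k}$ has unbounded support so the integral on $(-\infty,-A]$ cannot be decoupled from the region $\{x-y>-A\}$ where $W$ is only bounded; and a naive iteration of a convolution inequality does not produce an exponential envelope. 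The paper's proof of this step is its genuine content: it fixes a margin $\gamma_0>0$, builds $\underline{L_+}$ with slope $\gamma_0+\tfrac{{\rm d}f_+(0)}{{\rm d}u}$ (not $\tfrac{{\rm d}f_+(0)}{{\rm d}u}$ alone), introduces the sliding family $A_s=\inf\{A>0: W(x)\le A e^{\mu_0 x}+s\}$, and proves by a delicate touching-point contradiction (using $\underline{R}(-\infty)<1$, a tail-cut of $\mathfrak{k}$, and the inequality $\underline{L}[A_{s_k}\phi_0-W+s_k]\ge 0$) that $A_0:=\lim_{s\to 0}A_s<\infty$. That sliding argument is the missing idea, and without it Theorem~\ref{thm2.4} cannot be invoked.

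Part (iv) has a parallel issue. You appeal to ``the standard monotone-subhomogeneous semiflow argument, adapted to the compact-open topology,'' but the whole difficulty is that the spatial domain $\mathbb{R}$ is unbounded, so the usual compactness/part-metric argument does not transfer. The paper's actual route is to first use \cite[Lemma~5.5]{yz2025} to fix $\rho_0$ with $\partial_u f(x,u)\le\gamma_0<1$ for $x\le-\rho_0$, to define $a^*,b^*$ only through restrictions to $[-\rho_0,\infty)$, and then to \emph{prove} that the bound $a^*W\le\omega(\phi)\le b^*W$ extends globally by showing that a negative minimum of $\psi^*-a^*W$ on $(-\infty,-\rho_0)$ contradicts $Q[\psi^{**}]-a^*Q[W]\ge\gamma_0(\psi^*-a^*W)$. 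Only then does strict subhomogeneity together with the common limit $u_+^*$ at $+\infty$ force $a^*=b^*=1$. Your sketch omits this extension-to-$(-\infty,-\rho_0]$ step, which is precisely what makes the argument work on $BC(\mathbb{R},\mathbb{R})$ rather than on a compact domain.
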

		
		\noindent
		\begin{proof}
			The former conclusion of (i) follows from Theorem~\ref{thm2.1}-(ii), as applied to
			$(Q,\varphi)$ and $(\mathcal{S}\circ Q\circ \mathcal{S},\mathcal{S}[\varphi])$, respectively. And the latter
			conclusion of (i) follows from Theorem~\ref{thm2.1}-(i).
			
			(ii) follows from Theorem \ref{thm2.2}.
			
			(iii)  If $c_-^*(\infty)>0$, then we can easily obtain the existence of $W$ with $W(\infty)=u_+^*$ and $W(-\infty)=0$ by using  Theorem~\ref{thm2.3}.
			
			Now assume that  $c_-^*(\infty)<0$, $\limsup\limits_{s \to -\infty}\partial_u f(s,0)<1$,  $f(s,u)\leq \frac{{\rm d}  f_+(0)}{{\rm d} u}u$ for all $(s,u)\in \mathbb{R}\times \mathbb{R}_+$, and  either (a) or (b) in Lemma~\ref{lem4.1} holds.  Suppose that  $Q$ has a nontrivial  fixed point $W$ in $C_+$.
			Then $W>0$ and $c_+^*(\infty)>-c_-^*(\infty)>0$.
			By the former conclusion in (i), it follows  that  $W(\cdot,\infty)=u_+^*$ and $W(\cdot,-\infty)=0$.
			Select $l_0\in \mathbb{N}$ and $\gamma_0\in \left(0,\frac{1-\limsup\limits_{s \to -\infty}\partial_u f(s,0)}{5}\right)$ with $u_{l_0}^*>||W||_{L^\infty(\mathbb{R},\mathbb{R})}$ and $c_-^*(\underline{L_+})<0$, where $$\underline{L_+}[\phi](x)=(\gamma_0+ \frac{{\rm d}f_+(0)}{{\rm d} u})\int_{\mathbb{R}}\phi(x-y)\mathfrak{k}(y){\rm d} y, \,  \, \forall (x,\phi)\in \mathbb{R}\times \tilde{C}.$$
			Let $\underline{R}:=\underline{R}^+_{\gamma_0,u_{l_0}^{*}}$ be
			given  as in Lemma~\ref{lem4.1}, and define
			 $$\underline{L}[\phi](x)=\int_{\mathbb{R}}\underline{R}(x-y)\phi(x-y)\mathfrak{k}(y){\rm d} y,   \,  \,  \forall (x,\phi)\in \mathbb{R}\times \tilde{C}.$$
			It  then follows that
			$$
			Q[\phi]\leq \underline{L}[\phi]\leq \underline{L_+}[\phi],  \,  \,  \forall
			\phi\in C_{u_{l_0}^*},
			$$
			and
			$$
			\underline{L}[\phi]\leq \underline{L_+}[\phi],  \,  \,  \forall \phi\in \tilde{C}
			 \,  \,   \text{with}  \,  \,   \phi\geq 0.
			$$
			According to Lemma~\ref{lemma2.1},  there exist $\varepsilon_0\in\left(0,-c^*_-(\underline{L_{+}})\right)$ and $\mu_0\in (0,\infty)$
			such that
			$$
			\underline{L_+}^n[\phi_0](x)\leq \phi_0(x+n(c^*_-(\underline{L_{+}})+\varepsilon_0)),
			\,  \,  \forall  (n,x)\in \mathbb{N}\times \mathbb{R},
			$$
			where $\phi_0(x)=e^{\mu_0 x},\,  \,  \forall x\in \mathbb{R}$.
			
			For each $s>0$,  let $A_s=\inf\{A>0:  W(x)\leq(Ae^{\mu_0 x}+s),
			\,   \forall x\in \mathbb{R}\}$. Then $A_s\in(0,\infty)$ and $A_s$ is nonincreasing in $s\in [0,\infty)$. Let $A_0=\lim\limits_{s\rightarrow 0}A_s$. Then $A_0\in (0,\infty]$.  We further have the following
			claim.
			
			{\bf Claim. } $A_0<\infty$.
			
			Otherwise, $A_0=\infty$. By the conditions of $W(\pm\infty)$ and the definition of $A_s$, it follows that there exist sequences $\{s_k\}^\infty_{k=1}$ in $(0,1)$ and $\{x_k\}^\infty_{k=1}$ in $\mathbb{R}$ such that $\lim\limits_{k\to\infty}s_k=0$,  $\lim\limits_{k\to\infty}A_{s_k}=\infty$, and $W(x_k)=(A_{s_k}e^{\mu_0 x_k}+s_k)$.  Note that
			$\sup\limits_{k\in\mathbb{N}}x_k<\infty$ due to the boundedness of $W$. 	Without loss of generality, we may assume that $x_k\rightarrow x_0\in[-\infty,\infty)$ as $k\rightarrow \infty$. Next we proceed according to  two cases.
			
			{\bf{Case 1.}} $x_k\rightarrow x_0\in \mathbb{R}$ as $k\rightarrow \infty$.
			
			In this case, letting $k\to \infty$, we have $A_{0}=\frac{W(x_0)}{e^{\mu x_0}}\in \mathbb{R}_+$, a contradiction.	
			
			{\bf{Case 2.}} $x_k\rightarrow -\infty$ as $k\rightarrow \infty$.
			
			Let $v_k(x)=A_{s_k}e^{\mu_0 x} +s_k-W(x), \,  \,  \forall x\in \mathbb{R}, k\in \mathbb{N}$.  Take $\xi_0\in (0,\infty)$ and $k_0\in \mathbb{N}$
			such that
			 $$
			 \int_{-\infty}^{-\xi_0}\mathfrak{k}(y){\rm d} y<\frac{\gamma_0}{||\underline{R}||_{L^\infty(\mathbb{R},\mathbb{R})}}, \quad  \underline{R}(y)<2\gamma_0+\limsup\limits_{s \to -\infty}\partial_u f(s,0), \,  \,  \forall
			 y\leq \xi_0+x_{k_0}.
			 $$
			 Since  $\underline{L}[W]\geq Q[W] = W$ and $\underline{L}[\phi_0]\leq \underline{L_+}[\phi_0]\leq \phi_0$,
			 it follows  that  for any $(k,x)\in \mathbb{N}\times  \mathbb{R}$ with $k\geq k_0$ and $x\leq x_{k_0}$, there holds
			\begin{eqnarray*}
				v_k(x)&\geq& \underline{L}[A_{s_k}\phi_0-W](x) +s_k
				\\
				&= &\underline{L}[A_{s_k}\phi_0-W+s_k](x) +s_k-\underline{L}[s_k](x)
				\\
				&\geq &s_k(1-\underline{L}[1](x))
				\\
				&=&s_k[1-\int_{\mathbb{R}}\underline{R}(x-y)\mathfrak{k}(y){\rm d} y]
				\\
				&=&s_k[1-\int_{-\infty}^{-\xi_0}\underline{R}(x-y)\mathfrak{k}(y){\rm d} y-\int_{-\xi_0}^\infty\underline{R}(x-y)\mathfrak{k}(y){\rm d} y]
				\\
				&\geq&s_k[1-||\underline{R}||_{L^\infty(\mathbb{R},\mathbb{R})}\int_{-\infty}^{-\xi_0}\mathfrak{k}(y){\rm d} y-\int_{-\infty}^{x+\xi_0}\underline{R}(y)\mathfrak{k}(x-y){\rm d} y]
				\\
				&\geq&s_k[1-||\underline{R}||_{L^\infty(\mathbb{R},\mathbb{R})}\int_{-\infty}^{-\xi_0}\mathfrak{k}(y){\rm d} y-(2\gamma_0+\limsup\limits_{s \to -\infty}\partial_u f(s,0))\int_{-\infty}^{x+\xi_0}\mathfrak{k}(x-y){\rm d} y]
				\\
				&\geq&s_k[1-3\gamma_0-\limsup\limits_{s \to -\infty}\partial_u f(s,0)]>0.
			\end{eqnarray*}
			Letting $x=x_k$ with $k\geq k_0$, we then obtain have $0=v_k(x_k)>0$, a contradiction.
			
			Thus,   the second conclusion of  (iii) follows from the above claim and Theorem~\ref{thm2.4}.
			
			(iv)  In view of Theorem~\ref{thm2.5},
			we only need to verify {\bf (GAS)}.  Fix $\phi\in C_+\setminus\{0\}$.
			Note that  $\{W\}\bigcup \omega(\phi)\subseteq C_+^\circ \cap C_{u_{m_0}^*}$ for some $m_0\in \mathbb{N}$.
			
	 Let $\gamma_0=\frac
	 12 (1+\limsup\limits_{s \to -\infty}\partial_u f(s,0))$.
In view of \cite[Lemma 5.5 ]{yz2025},  there exists $\rho_0>1$ such that
	$$
	f(x,u)\leq \gamma_0u,	\quad \partial_u f(x,u)\leq \gamma_0<1,
		\,  \,  \forall (x,u)\in (-\infty,-\rho_0]\times [0, u_{m_0}^*].
	$$
Define		
$$
a^*=\sup \{ a\in\mathbb{ R}_{+}: \psi(x)\geq a W(x), \,  \,  \forall
 (x,\psi) \in [-\rho_{0},\infty) \times \omega(\phi)\}
$$
and
$$b^*=\sup \{ b\in\mathbb{ R}_{+}: \psi(x)\leq b W(x),\,  \,  \forall
 (x,\psi) \in [-\rho_{0},\infty) \times \omega(\phi)\}.
 $$
By the choice of $a^*,b^*$, we then have
$$
0<a^*\leq 1\leq b^*<\infty,    \quad a^*W|_{[-\rho_{0},\infty)}\leq \omega(\varphi)|_{[-\rho_{0},\infty)}\leq b^*W|_{[-\rho_{0},\infty)}.
$$
			
			We further show that $a^*W\leq \omega(\phi)\leq b^*W$. Otherwise,
			we have either
			$$\inf\{\psi(x)-a^*W(x): (x,\psi)\in\mathbb{R}\times \omega(\phi)\}<0,$$
			 or
			 $$\inf\{b^*W(x)-\psi(x): (x,\psi)\in\mathbb{R}\times \omega(\phi)\}<0.$$ This, together with the choices of $a^*,b^*$ and the conclusions (i), (iii), implies that there exists $(x^*,\psi^*)\in (-\infty,-\rho_0)\times \omega(\phi)$ such that either
			 $$\psi^*(x^*)-a^*W(x^*)=\inf\{\psi(x)-a^*W(x): (x,\psi)\in\mathbb{R}\times \omega(\phi)\}<0,$$
			 or
			 $$b^*W(x^*)-\psi^*(x^*)=\inf\{b^*W(x)-\psi(x): (x,\psi)\in\mathbb{R}\times \omega(\phi)\}<0.
			 $$
			  We only consider the former since the latter can be dealt in a similar way. By the invariance of $\omega(\phi)$, there exists
			  $\psi^{**}\in\omega(\phi)$ such that
			  $\psi^*=Q[\psi^{**};f]$.  It then follows from the choices $W,\psi^*,\psi^{**}$ and  $x^*$ that
			\begin{eqnarray*}
			\psi^*(x^*)-a^*W(x^*)
				&=& Q[\psi^{**};f](x^*)-a^*Q[W;f](x^*)
				\\
				&=&\int_{\mathbb{R}}\mathfrak{k}(y)[f(x^*-y,\psi^{**}(x^*-y))-a^{*}f(x^*-y,W(x^*-y))]{\rm d}y
				\\
				&\ge&\int_{\mathbb{R}}\mathfrak{k}(x^*+\rho_0-z)[f(z-\rho_{0},\psi^{**}(z-\rho_{0}))-f(z-\rho_{0},a^{*}W(z-\rho_{0}))]{\rm d}z
				\\
				&=&\int_{\mathbb{R_{+}}}\mathfrak{k}(x^*+\rho_0-z)[f(z-\rho_{0},\psi^{**}(z-\rho_{0}))-f(z-\rho_{0},a^{*}W(z-\rho_{0}))]{\rm d}z
				\\
				&&+\int_{-\infty}^{0} \mathfrak{k}(x^*+\rho_0-z)[f(z-\rho_{0},\psi^{**}(z-\rho_{0}))-f(z-\rho_{0},a^{*}W(z-\rho_{0}))]{\rm d}z
				\\
				&\ge&\int_{-\infty}^{0} \mathfrak{k}(x^*+\rho_0-z)[f(z-\rho_{0},\psi^{**}(z-\rho_{0}))-f(z-\rho_{0},a^{*}W(z-\rho_{0}))]{\rm d}z
				\\
				&=&\int_{-\infty}^{0} \mathfrak{k}(x^*+\rho_0-z) l(z-\rho_0)(\psi^{**}(z-\rho_{0})- a^*W(z-\rho_0)){\rm d}z
				\\
				&\geq &\int_{-\infty}^{0} \mathfrak{k}(x^*+\rho_0-z)\gamma_0\inf\{\psi(x)-a^*W(x): (x,\psi)\in\mathbb{R}\times \omega(\phi)\} {\rm d}z
				\\
				&=&\gamma_0(\psi^*(x^*)-a^*W(x^*))\int_{-\infty}^{0} \mathfrak{k}(x^*+\rho_0-z) {\rm d}z
				\\
				&\geq &\gamma_0(\psi^*(x^*)-a^*W(x^*)),
			\end{eqnarray*}
			where
			$$
			l(z):=\int_0^1\partial_u f(z,a^*W(z)+\tau(\psi^{**}(z)-a^*W(z)){\rm d} \tau\in [0,\gamma_0] \subseteq [0,1),\,  \,  \forall z\leq -\rho_0.
			$$
			This gives  rise to $\psi^*(x^*)-a^*W(x^*)\geq 0$, a contradiction.
			
			Now it suffices to prove $a^*=b^*=1$.  Otherwise, either $a^*<1$, or $b^*>1$. By  the strictly subhomogeneity of $f$  and  the limits  of $W$
			and $\omega(\phi)$ at $\infty$,  we have
			either $\omega(\phi)-a^*W\subseteq  C_+^\circ$, or $-\omega(\phi)+b^*W\subseteq  C_+^\circ$, which contradicts  the definitions $a^*$ and $b^*$.
			As a result, we obtain $\omega(\phi)=\{W\}$.
				\end{proof}
		
To finish this section, we provide a counterexample to show  that the existence of nontrivial fixed points may be affected by the properties of $f(x,\cdot)$   at finite range of location $x$.
		
Let us introduce three functions:
$$\underline{k}(x)=\frac{e^{-(x-2)^2}}{\sqrt{\pi}},\quad  \underline{\gamma}(x)= e^{-x^2},\quad  \forall x\in \mathbb{R},
$$
and
$$
\underline{f}(u)=\max\{ue^{-u}, \frac{1}{e}\},\quad  \forall
u\in \mathbb{R}_+.
$$
By a straightforward computation, we then have the following result.
 \begin{lemma}\label{lemma4.2}
			Let $\underline{k},\underline{\gamma}$ and $\underline{f}$ be defined as above. Then the following statements are valid:
			\begin{description}
			\item [(i)] $\underline{k}(\mathbb{R})\subseteq (0,1)$, $\int_{ \mathbb{R}}\underline{k}(y){\rm{d}} y=1$, and $\inf_{\mu>0}\{\frac{1}{\mu}\ln[e\int_{ \mathbb{R}}e^{-\mu y}\underline{k}(y){\rm{d}} y]\}=-1<0;$
			\item [(ii)]  $\lim\limits_{|x|\to \infty}\gamma(x)=0$ and $\gamma(\mathbb{R})\subseteq (0,1]$;
						\item [(iii)]  $\underline{f}(0)=0$, $\underline{f}'(0)=1$,  $\underline{f}(\mathbb{R})=[0,\frac{1}{e}]$, and $\underline{f}$ is nondecreasing, strictly subhomogeneous on $(0,\infty)$.
		\end{description}
		\end{lemma}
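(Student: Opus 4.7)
The three items are verifications for explicit functions, so the plan is direct computation in each case, relying on Gaussian integral identities for $\underline{k}$ and $\underline{\gamma}$ and elementary calculus for $\underline{f}$.

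For (i), the pointwise bound $\underline{k}(\mathbb{R})\subseteq(0,1)$ follows from $0<\tfrac{1}{\sqrt{\pi}}e^{-(x-2)^2}\le\tfrac{1}{\sqrt{\pi}}<1$, and $\int_{\mathbb{R}}\underline{k}=1$ from translation invariance together with the standard identity $\int_{\mathbb{R}}e^{-y^2}\,dy=\sqrt{\pi}$. The substantive step is the infimum. I would compute the Laplace-type transform by completing the square in the exponent $-(y-2)^2-\mu y$, which after translating the integration variable yields the explicit formula
\[
\int_{\mathbb{R}}e^{-\mu y}\underline{k}(y)\,dy=e^{\mu^2/4-2\mu}.
\]
Consequently $\tfrac{1}{\mu}\ln\bigl[e\int_{\mathbb{R}}e^{-\mu y}\underline{k}(y)\,dy\bigr]=\tfrac{\mu}{4}+\tfrac{1}{\mu}-2$, a strictly convex function of $\mu>0$ whose unique critical point is $\mu=2$, producing the infimum value $-1<0$. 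Item (ii) is immediate from the Gaussian formula $\underline{\gamma}(x)=e^{-x^{2}}$: the function is strictly positive, attains its maximum $1$ at $x=0$, and decays to zero as $|x|\to\infty$.

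For (iii), I would split the analysis along the branch transition of the maximum defining $\underline{f}$. The value $\underline{f}(0)=0$, the derivative $\underline{f}'(0)=1$, the range $[0,1/e]$, and monotonicity each follow by inspection on the individual branches and by checking continuity (and matching derivatives where relevant) at the crossover. The delicate point is strict subhomogeneity: for $\alpha\in(0,1)$ and $u>0$, one must verify $\underline{f}(\alpha u)>\alpha\underline{f}(u)$. On the branch where $ue^{-u}$ is operative this reduces to $e^{-\alpha u}>e^{-u}$, which is immediate; on the constant branch it reduces to $\tfrac{1}{e}>\tfrac{\alpha}{e}$; and the cross-branch case is handled by combining the branch-junction value with the elementary monotonicity of $v\mapsto ve^{-v}$ on the relevant subinterval.

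The main obstacle is expected to be the strict subhomogeneity verification in (iii) across branches of $\underline{f}$, since strictness must survive the transition where the two expressions composing the max meet in value; the remaining assertions are essentially bookkeeping on top of standard Gaussian integral calculations.
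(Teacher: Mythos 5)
Your work on (i) and (ii) is correct: completing the square gives $\int_{\mathbb{R}}e^{-\mu y}\underline{k}(y)\,dy=e^{\mu^2/4-2\mu}$, so $\tfrac{1}{\mu}\ln\bigl[e\int_{\mathbb{R}}e^{-\mu y}\underline{k}(y)\,dy\bigr]=\tfrac{1}{\mu}+\tfrac{\mu}{4}-2$, which is strictly convex with minimum $-1$ at $\mu=2$; and (ii) is immediate. The paper offers no proof of this lemma (it is dismissed as a ``straightforward computation''), so there is no paper argument to compare against.

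However, your treatment of (iii) has a genuine gap that you should have caught. For the function as printed, $\underline{f}(u)=\max\{ue^{-u},\tfrac{1}{e}\}$, observe that $ue^{-u}\le\tfrac{1}{e}$ for \emph{every} $u\ge 0$, with equality only at $u=1$ (since $\tfrac{d}{du}(ue^{-u})=(1-u)e^{-u}$). Hence the maximum equals the constant $\tfrac{1}{e}$ on all of $\mathbb{R}_+$, there is no branch crossover at all, and $\underline{f}(0)=\tfrac{1}{e}\ne 0$, $\underline{f}'(0)=0\ne 1$, and strict subhomogeneity fails. Your claim that $\underline{f}(0)=0$ ``follows by inspection on the individual branches'' is false as stated, and the two-branch structure you invoke (``the branch where $ue^{-u}$ is operative'' versus ``the constant branch'') is not produced by the displayed formula. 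The statement only becomes true after correcting the evident misprint to the monotone truncation $\underline{f}(u)=ue^{-u}$ for $u\in[0,1]$ and $\underline{f}(u)=\tfrac{1}{e}$ for $u>1$ (equivalently $\underline{f}(u)=\min\{u,1\}e^{-\min\{u,1\}}$). For that corrected function your three-case check of $\underline{f}(\alpha u)>\alpha\underline{f}(u)$ for $\alpha\in(0,1)$, $u>0$ is sound: when $u\le1$ the ratio is $e^{(1-\alpha)u}>1$; when $\alpha u\ge1$ it is $\tfrac{1}{e}>\tfrac{\alpha}{e}$; and when $u>1>\alpha u$ the inequality reduces to $\ln u-\alpha u>-1$, which holds since $\ln u>0$ and $\alpha u<1$. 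A correct write-up must begin by flagging and repairing the definition; as written, your plan for (iii) cannot be executed against the stated $\underline{f}$.
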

		
		For any given number $\beta>0$, we  define the linear operator $L_\beta$
		on  $C\rightarrow C$ as follows:
		$$
		L_\beta[\phi](x)=\int_{\mathbb{R}}\beta \underline{\gamma}(y) \phi(y) \underline{k}(x-y){\rm{d}} y,\quad  \forall (x, \phi)\in \mathbb{R}\times C.
		$$
		
		\begin{lemma}\label{lemma4.3}
			Let $L_\beta$ be defined as above. Then the following statements are valid:
			\begin{description}
				\item [(i)] $L_\beta=\beta L_1$ for all $\beta \in (0, \infty)$.
				\item[(ii)] For any $\beta \in (0, \infty)$, $L_\beta[C_+\setminus\{0\}]\subseteq C_+^\circ$, $L_\beta$ is a bounded linear  operator on $(C, |\cdot|_\infty)$, and $L_\beta|_{C_1}:C_1\to C$ is a continuous and compact map on $C_1$.
				\item [(iii)] $\rho_{L_1}>0$ and $\rho_{L_\beta}=\beta\rho_{L_1}$, where $\rho_L$ represents the asymptotic spectral radius of $L$, as defined in \cite{yz2023}.
			\end{description}
			Hence, there exists $\beta_0 \in (0, \infty)$ such that $\rho_{L_{\beta}}>1$ for all $\beta\geq \beta_0$.
		\end{lemma}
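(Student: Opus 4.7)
The plan is to verify the three statements in order. Part (i) is immediate: pulling the scalar $\beta$ outside the integral in the definition of $L_\beta$ yields $L_\beta[\phi] = \beta L_1[\phi]$ for every $\phi \in C$ and every $\beta > 0$.

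For part (ii), strong positivity follows from Lemma~\ref{lemma4.2}: given $\phi \in C_+ \setminus \{0\}$, continuity yields an interval $[a,b]$ on which $\phi > 0$, and since $\underline{\gamma}$ and $\underline{k}$ are everywhere strictly positive on $\mathbb{R}$, the integrand $\beta\underline{\gamma}(y)\phi(y)\underline{k}(x-y)$ is strictly positive on $[a,b]$ for every $x \in \mathbb{R}$, forcing $L_\beta[\phi](x) > 0$ for every $x$. Boundedness with respect to $|\cdot|_\infty$ follows from the estimate $|L_\beta[\phi](x)| \leq \beta \cdot 1 \cdot |\phi|_\infty \cdot \int_{\mathbb{R}} \underline{k}(y)\,{\rm d}y = \beta |\phi|_\infty$, using $\underline{\gamma}(\mathbb{R}) \subseteq (0,1]$ and $\int_{\mathbb{R}} \underline{k} = 1$ from Lemma~\ref{lemma4.2}. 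For compactness of $L_\beta|_{C_1}$ with respect to the compact-open topology on $C$, I would apply Arzel\`a--Ascoli on each compact interval: the family $\{L_\beta[\phi] : \phi \in C_1\}$ is uniformly bounded by $\beta$, and the mean-value theorem applied to the Gaussian $\underline{k}$ together with $\int_{\mathbb{R}} \underline{\gamma}(y)\,{\rm d}y = \sqrt{\pi}<\infty$ produces a uniform Lipschitz estimate $|L_\beta[\phi](x) - L_\beta[\phi](x')| \leq \beta \sqrt{\pi} |\underline{k}'|_\infty |x - x'|$ for all $\phi \in C_1$, which delivers the required equicontinuity. Continuity of $L_\beta|_{C_1}$ in the local uniform topology is then a routine dominated-convergence argument exploiting the decay of $\underline{\gamma}$ off any bounded $y$-range.

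For part (iii), the scaling $\rho_{L_\beta} = \beta \rho_{L_1}$ follows from (i) combined with positive homogeneity of the asymptotic spectral radius defined in~\cite{yz2023}, and the final existence assertion then reduces to taking $\beta_0 > 1/\rho_{L_1}$. The substantive point is $\rho_{L_1} > 0$: using the compactness and strong positivity of $L_1|_{C_1}$ from (ii), I would invoke the Krein--Rutman-type principal eigenvalue theorem appropriate for the compact-open topology setting to extract a strictly positive eigenfunction with positive principal eigenvalue $\lambda_0$, whence $\rho_{L_1} \geq \lambda_0 > 0$. The main obstacle will be matching the hypotheses of whichever Krein--Rutman variant is available with the precise definition of $\rho_L$ from~\cite{yz2023}; should that route prove awkward, a back-up plan is to exhibit a positive test function $\phi_0$ (for instance a smooth bump supported near $y=2$, where $\underline{k}$ peaks) and a constant $c_0 > 0$ with $L_1^n[\phi_0] \geq c_0^n \phi_0$ on a common compact set, from which the lower bound $\rho_{L_1} \geq c_0 > 0$ can be read off directly.
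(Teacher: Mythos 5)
The paper states Lemma~\ref{lemma4.3} without a proof, so there is no argument of the authors to compare against; your proposal must be judged on its own. Your arguments for (i) and (ii) are complete and correct: the scalar extraction for (i) is immediate; for (ii) your positivity argument (everywhere--positive kernels times a function positive on some interval) correctly gives $L_\beta[\phi](x)>0$ for all $x$, hence membership in $C_+^\circ$; your $|\cdot|_\infty$ bound $\beta$ via $\underline{\gamma}\le 1$ and $\int\underline{k}=1$ is right; and the uniform Lipschitz estimate $\beta\sqrt{\pi}\,\|\underline{k}'\|_\infty|x-x'|$ together with the uniform bound $\beta$ yields precompactness of $L_\beta(C_1)$ in the compact--open topology by Arzel\`a--Ascoli. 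Continuity in $d_{\mathcal C}$ is indeed a tail/bulk split using the decay of $\underline{\gamma}$, and the estimates you sketch are uniform in $x$, so the conclusion is even slightly stronger than required.

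For (iii), the homogeneity $\rho_{L_\beta}=\beta\rho_{L_1}$ is immediate from (i) for any reasonable spectral--radius--type functional, and the final assertion $\rho_{L_\beta}>1$ for $\beta\ge\beta_0$ then reduces to $\rho_{L_1}>0$. Your two proposed routes are both viable, but the test--function route is preferable here because it does not depend on matching the hypotheses of a Krein--Rutman theorem to the precise definition of the asymptotic spectral radius in \cite{yz2023}. Concretely: take $\phi_0\in C_+\setminus\{0\}$ with compact support $[a,b]$; then $L_1[\phi_0]$ is continuous and strictly positive, so $m:=\inf_{[a,b]}L_1[\phi_0]>0$, while $M:=\sup\phi_0<\infty$. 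Since $\phi_0$ vanishes off $[a,b]$, one gets $L_1[\phi_0]\ge(m/M)\phi_0$ on all of $\mathbb{R}$, and monotonicity of $L_1$ gives $L_1^n[\phi_0]\ge(m/M)^n\phi_0$ for every $n$. This lower bound is exactly the structure that feeds the asymptotic spectral radius in \cite{yz2023} and yields $\rho_{L_1}\ge m/M>0$. It would strengthen the write--up to promote this from a ``back--up plan'' to the primary argument, since it is elementary, self--contained, and avoids any ambiguity about which Krein--Rutman variant applies in the compact--open setting.
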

		
We further define two functions $g(x,u)$ and $h(x,u)$ as follows:
$$
g(x, u)=\beta_0\underline{\gamma}(x)\underline{f}(u),
\quad
h(x,u)=\max \{\beta_0\underline{\gamma}(x),e-e^{-x}\}\underline{f}(u),
\quad  \forall (x,u)\in  \mathbb{R}\times  \mathbb{R}_+.
$$
\begin{lemma}\label{lemma4.4}
			Let $g$ and  $h$ be defined as above. Then $h\geq g, \, \, \frac{\partial g}{\partial u}(x,0)=\beta_0\underline{\gamma}(x),\, \,
			 \frac{\partial h}{\partial u}(x,0)=\max\{\frac{\partial g}{\partial u}(x,0),e-e^{-x}\}$, \,  $\lim\limits_{x\rightarrow-\infty}\frac{\partial h}{\partial u}(x,0)=0$, \, and $\lim\limits_{x\rightarrow\infty}\frac{\partial h}{\partial u}(x,0)=e$.
		\end{lemma}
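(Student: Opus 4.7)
The plan is to verify each of the five claims by direct computation from the defining formulas for $g$, $h$, $\underline{\gamma}$, and $\underline{f}$, relying only on the elementary facts collected in Lemma~\ref{lemma4.2}. Everything reduces to arithmetic once one remembers that $\underline{f}\geq 0$ on $\mathbb{R}_+$ and $\underline{f}'(0)=1$.

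First, for the pointwise inequality $h\geq g$, I would note that for every $(x,u)\in\mathbb{R}\times\mathbb{R}_+$ one has $\underline{f}(u)\geq 0$ by Lemma~\ref{lemma4.2}(iii), while the elementary bound $\max\{\beta_0\underline{\gamma}(x),\,e-e^{-x}\}\geq \beta_0\underline{\gamma}(x)$ holds trivially. Multiplying these nonnegative quantities yields $h(x,u)\geq g(x,u)$ on all of $\mathbb{R}\times\mathbb{R}_+$, regardless of the sign of $e-e^{-x}$.

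Next, since $g(x,\cdot)$ and $h(x,\cdot)$ are each scalar multiples of $\underline{f}(\cdot)$, differentiating at $u=0$ and invoking $\underline{f}'(0)=1$ gives immediately
\[
\frac{\partial g}{\partial u}(x,0)=\beta_0\underline{\gamma}(x),\qquad \frac{\partial h}{\partial u}(x,0)=\max\{\beta_0\underline{\gamma}(x),\,e-e^{-x}\}.
\]
Substituting the first identity into the right-hand side of the second then produces the stated relation $\partial_u h(x,0)=\max\{\partial_u g(x,0),\,e-e^{-x}\}$.

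Finally, for the two limits at $\pm\infty$, I would use the explicit form $\underline{\gamma}(x)=e^{-x^2}$, so $\beta_0\underline{\gamma}(x)\to 0$ as $x\to\pm\infty$, together with $e-e^{-x}\to-\infty$ as $x\to-\infty$ and $e-e^{-x}\to e$ as $x\to\infty$. Hence $\partial_u h(x,0)$ tends to $\max\{0,-\infty\}=0$ on one side and to $\max\{0,e\}=e$ on the other. No real obstacle is anticipated; the only minor point is the differentiability of $\underline{f}$ at $0$ with derivative $1$, but this is exactly what is asserted in Lemma~\ref{lemma4.2}(iii) (since $ue^{-u}$ is below $1/e$ in a neighborhood of $0$, so $\underline{f}$ agrees with $u\mapsto ue^{-u}$ there).
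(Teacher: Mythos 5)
Your proof is correct and follows the same (unstated) direct-computation approach that the paper treats as immediate: each claim reduces to $\underline f\ge 0$, $\underline f'(0)=1$, and the elementary behavior of $\beta_0 e^{-x^2}$ and $e-e^{-x}$ at $\pm\infty$. One caveat worth flagging: your closing parenthetical justification of $\underline f'(0)=1$ reverses the sense of the $\max$ --- since $ue^{-u}\le 1/e$ for all $u\ge 0$ (with equality only at $u=1$), the literal formula $\underline f(u)=\max\{ue^{-u},1/e\}$ would equal the constant $1/e$ near $0$ rather than $ue^{-u}$, which contradicts $\underline f(0)=0$ and $\underline f'(0)=1$ from Lemma~\ref{lemma4.2}(iii); this is evidently a typo in the paper's definition of $\underline f$, and your main argument is unaffected because it correctly invokes the conclusions of Lemma~\ref{lemma4.2}(iii) rather than re-deriving them from the raw formula.
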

		
By  \cite[Theorem 4.9-(ii)]{yz2023} and the choices of $\beta_0, \underline{f}$ and $h$, we easily prove the  following result.
		
		\begin{prop}\label{prop4.3}
			There exists $\underline{W}\in C_+^\circ$ such that $Q[\underline{W};g]=\underline{W}$ and $\underline{W}(\pm \infty)=0$.
		\end{prop}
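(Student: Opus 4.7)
The plan is to invoke \cite[Theorem 4.9-(ii)]{yz2023} applied to $Q[\cdot;g]$, after verifying its hypotheses, which place $Q[\cdot;g]$ squarely in the BB framework (both limiting systems are annihilating) with a supercritical linearization at the zero equilibrium. The ingredients I would check, one by one, are: (i) $Q[\cdot;g]$ is monotone and strongly positive, (ii) $Q[\cdot;g]$ is strictly subhomogeneous with a uniform absorbing set, (iii) the Fr\'echet derivative at $0$ is $L_{\beta_0}$ with $\rho_{L_{\beta_0}}>1$, and (iv) the two limit maps at $\pm\infty$ are identically zero.

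Monotonicity and strong positivity follow directly from Proposition~\ref{prop4.3}-(iv), (ii) combined with the fact that $g(\mathbb{R}\times(0,\infty))\subseteq(0,\infty)$ (since $\underline{\gamma}>0$ on $\mathbb{R}$ and $\underline{f}((0,\infty))\subseteq(0,1/e]$ by Lemma~\ref{lemma4.2}) and $\underline{k}(\pm\infty,0)\neq\{0\}$ from Lemma~\ref{lemma4.2}-(i). Strict subhomogeneity of $Q[\cdot;g]$ is inherited from the corresponding property of $\underline{f}$, transported through the product structure $g(x,u)=\beta_0\underline{\gamma}(x)\underline{f}(u)$ by Proposition~\ref{prop4.3}-(vi). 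A uniform absorbing bound is immediate: since $\underline{f}\leq 1/e$, $\underline{\gamma}\leq 1$, and $\int_{\mathbb{R}}\underline{k}=1$, we get $Q[\phi;g](x)\leq \beta_0/e$ pointwise for every $\phi\in C_+$.

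For the linearization, Lemma~\ref{lemma4.4} gives $\partial_u g(x,0)=\beta_0\underline{\gamma}(x)$, so the derivative of $Q[\cdot;g]$ at $0$ acts as
\[
\phi\mapsto \int_{\mathbb{R}}\beta_0\underline{\gamma}(z)\phi(z)\underline{k}(x-z)\,\mathrm{d}z=L_{\beta_0}[\phi](x),
\]
that is, exactly $L_{\beta_0}$. By Lemma~\ref{lemma4.3}, $L_{\beta_0}|_{C_1}$ is compact and strongly positive with $\rho_{L_{\beta_0}}=\beta_0\rho_{L_1}>1$, so Krein--Rutman theory supplies a positive principal eigenfunction $\phi^*$ associated with $\rho_{L_{\beta_0}}>1$. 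Finally, the annihilating nature of the limit maps is built in: $\underline{\gamma}(\pm\infty)=0$ forces $g(\pm\infty,u)=0$ uniformly on bounded $u$-sets, so the limit maps $Q_\pm[\cdot;g_\pm]$ are both identically zero, which is the strongest possible form of uniform asymptotic annihilation at both infinities. Plugging these into \cite[Theorem 4.9-(ii)]{yz2023} yields the desired $\underline{W}\in C_+^\circ$ with $Q[\underline{W};g]=\underline{W}$ and $\underline{W}(\pm\infty)=0$.

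The main obstacle, as I see it, is purely notational---matching the precise hypothesis list of \cite[Theorem 4.9-(ii)]{yz2023} to the present setup. Should any hypothesis there be awkward to verify in black-box form, the same conclusion can be recovered by monotone iteration: the supercriticality $\rho_{L_{\beta_0}}>1$ together with $\underline{f}(u)/u\to 1$ as $u\to 0^+$ yields, for small enough $\varepsilon>0$, a sub-solution $\varepsilon\phi^*$ satisfying $Q[\varepsilon\phi^*;g]\geq\varepsilon\phi^*$, while the constant $\beta_0/e$ is a super-solution. The nondecreasing iterates $Q^n[\varepsilon\phi^*;g]$ then converge to a fixed point $\underline{W}\geq\varepsilon\phi^*>0$, and the decay $\underline{W}(\pm\infty)=0$ follows from dominated convergence applied to $\underline{W}(x)=\int\beta_0\underline{\gamma}(z)\underline{f}(\underline{W}(z))\underline{k}(x-z)\,\mathrm{d}z$, splitting the integration at, say, $|z|=|x|/2$ so that $\underline{\gamma}(z)$ is small on one piece and $\underline{k}(x-z)$ is small on the other.
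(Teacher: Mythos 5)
Your proposal is correct and takes essentially the same approach as the paper, which proves the proposition in a single sentence by invoking \cite[Theorem 4.9-(ii)]{yz2023} together with the choices of $\beta_0$, $\underline{\gamma}$, and $\underline{f}$. You supply exactly the hypothesis verification the paper leaves implicit (monotonicity, strong positivity, strict subhomogeneity, identification of the linearization at $0$ with $L_{\beta_0}$ having $\rho_{L_{\beta_0}}>1$, and vanishing limit maps from $\underline{\gamma}(\pm\infty)=0$), plus a self-contained monotone-iteration fallback, so the substance matches the paper's route but in more detail.
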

		
		\begin{prop}\label{prop4.4}The following statements are valid:
			\begin{description}
				\item [(i)] $c^*_-(\infty; Q[\cdot;h])=-1<0$ and $c^*_+(\infty; Q[\cdot;h])>0$
				\item[(ii)]
				There exists $\overline{W}\in \underline{W}+C_+$ such that $Q[\overline{W}; h]=\overline{W},\, \,
				\overline{W}(-\infty)=0$, and $\overline{W}(\infty)=1$,  where $\underline{W}$ is defined as in Proposition~\ref{prop4.3}.
			\end{description}
		\end{prop}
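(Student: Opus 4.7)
The plan is to handle (i) by a direct spreading-speed computation and (ii) by a monotone iteration sandwiched between the known subsolution $\underline{W}$ and an explicit constant supersolution, followed by an application of Theorem~\ref{thm2.1}(ii) to the resulting fixed point to pin down the asymptotic limits.

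For (i), substituting the limits from Lemma~\ref{lemma4.4} into \eqref{6.1-2} gives $c^*_\pm(\infty;Q[\cdot;h])=\inf_{\mu>0}\frac{1}{\mu}\ln\bigl[e\int_\mathbb{R}e^{\pm\mu y}\underline{k}(y)\,dy\bigr]$, since $\frac{dh_+(0)}{du}=e$. Lemma~\ref{lemma4.2}(i) delivers $c^*_-=-1<0$ directly; for $c^*_+$, the substitution $z=y-2$ in the Gaussian integrand gives $\int_\mathbb{R} e^{\mu y}\underline{k}(y)\,dy=e^{2\mu+\mu^2/4}$, whence $c^*_+=\inf_{\mu>0}(\mu^{-1}+2+\mu/4)=3>0$, attained at $\mu=2$.

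For the existence in (ii), I would set $M_0:=\max\{\beta_0/e,1\}$ and verify that it is a supersolution: since $\underline{f}(\mathbb{R}_+)\subseteq[0,1/e]$ by Lemma~\ref{lemma4.2}(iii) and $\max\{\beta_0\underline{\gamma}(x),e-e^{-x}\}\leq\max\{\beta_0,e\}$ on $\mathbb{R}$, one has $h(\cdot,M_0)\leq M_0$ and hence $Q[M_0;h]\leq M_0$. The identity $\underline{W}=Q[\underline{W};g]$ together with $g(x,u)\leq\beta_0/e$ yields $\underline{W}\leq\beta_0/e\leq M_0$. Combining $h\geq g$ (Lemma~\ref{lemma4.4}) with the monotonicity of $Q[\cdot;h]$, the iterates $Q^n[\underline{W};h]$ are nondecreasing and lie in $C_{M_0}$; the precompactness of $Q$ on bounded order intervals (as in Proposition~\ref{prop4.3}) delivers a limit $\overline{W}\in C_+$ with $Q[\overline{W};h]=\overline{W}$ and $\underline{W}\leq\overline{W}\leq M_0$, so $\overline{W}\in\underline{W}+C_+\setminus\{0\}$.

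For the asymptotic behavior, I would verify (SP), (UC), and (GUAA$_-$) for $Q[\cdot;h]$ (the first two are immediate from $h>0$ on $\mathbb{R}\times(0,\infty)$, from (i), and from the unique positive fixed point $u_+^*=1$ of $h_+$; the third follows because $h_-\equiv 0$ trivially yields uniform asymptotic annihilation at $-\infty$), and then apply Theorem~\ref{thm2.1}(ii) to $Q[\cdot;h]$ with initial datum $\overline{W}$ itself. Using $Q^n[\overline{W};h]=\overline{W}$ and $r^*=1$, the conclusion becomes $\sup\{|\overline{W}(x)-\mathbf{1}_{\mathbb{R}_+}(x)|:x\in(-\infty,-n\varepsilon]\cup[n(1+\varepsilon),n(3-\varepsilon)]\}\to 0$ as $n\to\infty$, for any fixed $\varepsilon\in(0,1)$. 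The left branch forces $\overline{W}(-\infty)=0$; on the right, consecutive intervals $[n(1+\varepsilon),n(3-\varepsilon)]$ overlap once $n\geq(1+\varepsilon)/(2-2\varepsilon)$, so their union covers a right half-line and forces $\overline{W}(\infty)=1$.

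The main obstacle will be checking (GUAA$_-$) cleanly in the degenerate regime $h_-\equiv 0$, since the general statements in this section are phrased under the relation $f_-(u)<u$ for $u>0$, which is vacuous at the trivial limit; one must verify directly that the spatial convergence $T_{-y}\circ Q[\cdot;h]\circ T_y\to 0$ as $y\to-\infty$ passes through the $\max$ in the definition of $h$, and that the required quantitative domination at $-\infty$ survives. A secondary technical point is confirming that the pointwise monotone limit of $Q^n[\underline{W};h]$ is an element of $BC(\mathbb{R},\mathbb{R})$ that genuinely satisfies $Q[\overline{W};h]=\overline{W}$; this follows by combining monotone convergence with the precompactness of $Q$ on $C_{M_0}$ and the continuity of $Q[\cdot;h]$ in the compact-open topology.
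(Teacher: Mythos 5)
Your argument is correct and its overall architecture (produce a fixed point in an invariant order interval bounded below by $\underline{W}$, then deduce the limits at $\pm\infty$ from the spreading theory of Section~2) matches the paper's. The one genuine difference is in how you obtain the fixed point in (ii): the paper applies the Schauder fixed point theorem directly to the order interval $\mathfrak{K}=[\underline{W},\max\{\beta_0,\|\underline{W}\|_{L^\infty}\}]_C$, using the compactness of $Q[\cdot;h]$ on bounded order intervals (Proposition~4.3(i)), whereas you instead iterate from the subsolution $\underline{W}$, exploit the monotone sandwiching $\underline{W}=Q[\underline{W};g]\le Q[\underline{W};h]\le\cdots\le M_0$, and pass to the limit along the nondecreasing orbit. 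Both are valid; your route is slightly more elementary and also makes the comparison $\overline{W}\ge\underline{W}$ immediate, whereas Schauder gives a fixed point somewhere in $\mathfrak{K}$ without a constructive description. Your choice $M_0=\max\{\beta_0/e,1\}$ is a cleaner supersolution than the paper's (it always satisfies $Q[M_0;h]\le M_0$ without extra verification), and your explicit evaluation $c^*_+(\infty;Q[\cdot;h])=3$ via the Gaussian moment generating function is a useful concretization of what the paper leaves as a citation.

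On the concern you flag about the degenerate right-end limit $h_-\equiv 0$: this is not actually an obstruction. The hypothesis $f_-(u)<u$ for $u>0$ used in the verification of \textbf{(GUAA$_-$)} (see the paper's proof of Proposition~4.2(ii) for general $f$) is satisfied trivially, and indeed more strongly, by $h_-\equiv 0$, since $0<u$ for all $u>0$; the iterated sets $(h_-)^n([0,r])=\{0\}$ collapse at the first step, which is the strongest possible form of the required shrinkage. Similarly \textbf{(B2)} accommodates $\tfrac{dh_-(0)}{du}=0<1$ without issue, and the ordering $T_{-y}\circ Q[\cdot;h]\circ T_y\to Q[\cdot;h_-]\equiv 0$ as $y\to-\infty$ passes through the $\max$ in $h$ because both $\beta_0\underline{\gamma}(x)\to 0$ and $e-e^{-x}\to-\infty$ as $x\to-\infty$, so the maximum tends to $0$ monotonically for large negative $x$. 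Thus no special treatment is needed and your proof is complete once this is noted.
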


		\begin{proof}
			(i) follows from Lemma~\ref{lemma4.2}-(ii).
			
			(ii)  Let $\mathfrak{K}:=[\underline{W},\max\{\beta_0,||\underline{W}||_{L^\infty(\mathbb{R},\mathbb{R})}\}]_C$. By the choices of $g$, $h$ and $\underline{W}$, we can verify
			that  $Q[\mathfrak{K};h]\subseteq \mathfrak{K}$.   Thus, the  Schauder fixed point Theorem implies  that $Q[\cdot; h]$ has  a fixed point $\overline{W}$ in $\mathfrak{K}$.
		In view of  the former conclusion of Theorem~\ref{thm4.1}-(i), we then obtain $\overline{W}(-\infty)=0$ and $\overline{W}(\infty)=1$.
			\end{proof}
		
			Based on the definition of $h$, we find that $Q[\cdot;h]$ satisfies all the conditions in the second part of Theorem~\ref{thm4.1}-(iv) except for the limiting linear control condition that $f(s,u)\leq \frac{{\rm d}  f_+(0)}{{\rm d} u}u$ for all $(s,u)\in \mathbb{R}\times \mathbb{R}_+$.  Proposition~\ref{prop4.4} indicates that  the nontrivial  fixed point may exist when  the nonlinearity function does not satisfies the limiting linear control condition.  In other words,  the  value of the nonlinear function  at the finite range of location $x$  may lead to the existence of nontrivial fixed points.
		
	\section*{Appendix. Orbit precompactness for noncompact maps}
	
	In a series of three papers \cite{yz2020, yz2023, yz2025},  we obtained
	the  existence of fixed points for  monotone maps with asymptotic
	translation invariance under the assumption that some forward orbits are precompact with respect to the compact open topology (i.e., the local uniform convergence topology). In this Appendix, we provide a sufficient condition for a class of noncompact maps to admit this orbit precompactness.
	
	Let $\mathbb{Z}$,
	$\mathbb{N}$, $\mathbb{R}$, $\mathbb{R}_+$, $\mathbb{R}^N$, and $\mathbb{R}_+^N$ be the sets of all
	integers, nonnegative integers, reals,   nonnegative reals,  N-dimensional real vectors,  and  N-dimensional  nonnegative real vectors,
	respectively. We equip  $\mathbb{R}^N$ with the norm $||\xi||_{\mathbb{R}^N}\triangleq
	\sqrt{\sum \limits_{n=1}^{N}\xi_n^2}$.
	Let $\hat{1}=(1,1,...,1)^T\in \mathbb{R}^N$, $X=BC(\mathbb{R},\mathbb{R}^N)$ be the normed
	vector space of all bounded and continuous functions from
	$\mathbb{R}$ to $\mathbb{R}^N$ with the norm $||\phi||_{X}\triangleq
	\sum \limits_{n=1}^{\infty}2^{-n}\sup\limits_{|x|\leq n}\{||\phi(x)||_{\mathbb{R}^N}\}$, $X_+=\{\phi\in
	X:\phi(x)\in \mathbb{R}_+^N, \forall x\in \mathbb{R}\}$.
	For a compact metric space $M$, let
	$Y=C(M, \mathbb{R}^N),~Y_+=C(M, \mathbb{R}_+^N)$,  and $||\beta||_{Y}\triangleq  \sup\limits_{\theta\in
		M} \{||\beta(\theta)||_{\mathbb{R}^N}\}$.
	Let  $\mathcal{C}=C(M,X)$ be the
	normed vector space of all continuous maps from $M$
	into $X$ with the norm $||\varphi||_{\mathcal{C}}\triangleq
	\sup\limits_{\theta\in M}\{||\varphi(\theta)||_{X}\}$, and $\mathcal{C}_+=C(M,X_+)$. It follows that $\mathcal{C}_+$ is a closed
	cone in the normed vector space $\mathcal{C}$.
	
	For convenience, we also use notations $\mathcal{C}=BC(M\times \mathbb{R},\mathbb{R}^N)$, $\mathcal{C}_+=BC(M\times \mathbb{R},\mathbb{R}^N_+)$. We define $\mathcal{C}_{\phi,\psi}:=[\phi,\psi]_{\mathcal{C}}$
	for any $\phi \leq \psi$ in $\mathcal{C}$, and $\mathcal{C}_{\phi}:=
	[0,\phi]_{\mathcal{C}}$ whenever $\phi\in \mathcal{C}_+$.
	Let $d_{\mathcal{C}}$ be the metric
	induced by  $||\cdot||_{\mathcal{C}}$ in $\mathcal{C}$, and
	$(\mathcal{C},\|\cdot\|_{\infty})$ be the Banach space equipped with the supremum norm $\|\cdot\|_{\infty}$.
	
	Recall that for any given $y\in \mathbb{R}$, the translation
	operator $T_y$ on $\mathcal{C}$ is defined by $[T_y\phi](\theta,x)=
	\phi(\theta,x-y)$ for all $\phi\in \mathcal{C}$ and $(\theta,x)\in
	M\times\mathbb{R}$.
	In what follows, a map $M:  (\mathcal{C}_+, d_{\mathcal{C}})\rightarrow  (\mathcal{C}_+, d_{\mathcal{C}})$ is said to be continuous if  $M: (\mathcal{C}_{r},d_{\mathcal{C}})\rightarrow  (\mathcal{C}_+, d_{\mathcal{C}})$ is  continuous for any given $r\in Int(Y_+)$.
	
	\
	
	\noindent
	{\bf Theorem A. }  {\it
		Let $Q: (\mathcal{C}_+, d_{\mathcal{C}})\rightarrow  (\mathcal{C}_+, d_{\mathcal{C}})$ be a continuous map such that $Q(\mathcal{C}_{r^*})\subseteq \mathcal{C}_{r^*}$ for some
		$r^*\in Int(Y_+)$.  Assume that $Q=L+K$ satisfies the following two conditions:
		\begin{enumerate}
			\item[(1)] $L: (\mathcal{C}_+,\|\cdot\|_{\infty})
			\rightarrow (\mathcal{C}_+,\|\cdot\|_{\infty})$
			a Lipschitz map with its Lipschitz constant $k\in (0,1)$;
			
			\item[(2)] $K: (\mathcal{C}_+, d_{\mathcal{C}})\rightarrow  (\mathcal{C}_+, d_{\mathcal{C}})$ is a continuous map,
			and $K(\mathcal{C}_{r^*})$ is precompact in $(\mathcal{C}, ||\cdot||_{\mathcal{C}})$.
		\end{enumerate}
		Then for any $y\in \mathbb{R}$, the forward orbit $\{(T_yQ)^n(r^*)\}_{n \ge 1}$ is precompact
		in $(\mathcal{C},\|\cdot\|_{\mathcal{C}})$.}
	
	\begin{proof}
		Define $P_1=K$, and $P_{n+1}=L^nK+P_nQ, \forall n\ge 1$.   By induction, it is easy to verify that
		$$
		Q^n=L^n+P_n, \quad
		P_n(\mathcal{C}_+)\subseteq \mathcal{C}_+,  \quad
		L^n(\mathcal{C}_{r^*})\subseteq L^n(0)+\mathcal{C}_{-k^n\|r^*\|_Y\hat{1},k^n
			\|r^*\|_Y\hat{1}}, \quad \forall n\geq 1.
		$$
		Thus,  each set $P_n(\mathcal{C}_{r^*})+L^{n}(0)
		\subseteq \mathcal{C}_{-\|r^*\|_{Y} \hat{1},r^*+\|r^*\|_{Y} \hat{1}}$ is precompact in  $(\mathcal{C},\|\cdot\|_{\mathcal{C}})$.
		Let $\alpha$ be the Kuratowski measure of non-compactness for the complete metric space $(\mathcal{C}_{-2\|r^*\|_{Y}\hat{1},2\|r^*\|_{Y} \hat{1}},d_{\mathcal{C}})$. It then follows that
		\begin{eqnarray*}
			\alpha(Q ^n(\mathcal{C}_{r^*})) &\le& \alpha(L^n(\mathcal{C}_{r^*})-L^{n}(0))+
			\alpha(P_n(\mathcal{C}_{r^*}) +L^{n}(0))\\
			&=& \alpha(L^n(\mathcal{C}_{r^*})-L^{n}(0))\\
			&\le& \alpha(\mathcal{C}_{-k^n\times \|r^*\|_Y \hat{1},
				k^n\times \|r^*\|_Y \hat{1}})\\
			&=& k^n\|r^*\|_Y\alpha(\mathcal{C}_{-\hat{1},\hat{1}}), \quad \forall n\geq 1,
		\end{eqnarray*}
		and hence, $\lim\limits_{n\to \infty}\alpha(\overline{Q^n(\mathcal{C}_{r^*})})=\lim\limits_{n\to \infty}\alpha(Q^n(\mathcal{C}_{r^*}))=0$.
		
		In view of $Q(\mathcal{C}_{r^*})\subseteq \mathcal{C}_{r^*}$, we have
		$\overline{Q^n(\mathcal{C}_{r^*})}\subseteq \overline{Q^{n-1}(\mathcal{C}_{r^*})}, \forall n\ge 1$. By an elementary property of the noncompactness measure
		(see, e.g., \cite[Lemma 2.1 (a)]{MagalZhao2005}), it follows that the set $A:=\bigcap\limits_{n\ge 1}\overline{Q^n(\mathcal{C}_{r^*})}$ is nonempty and compact in $(\mathcal{C}_{-2\|r^*\|_{Y} \hat{1}, 2\|r^*\|_{Y} \hat{1}},d_{\mathcal{C}})$, and
		$\lim\limits_{n\to \infty}\sup_{\phi\in A_n}d_{\mathcal{C}}(\phi,A)=0$,
		where  $A_n=\overline{Q^n(\mathcal{C}_{r^*})}$. This implies that
		$\lim\limits_{n\to \infty}d_{\mathcal{C}}(Q^n(r^*),A)=0$. Thus,
		the forward orbit $\{Q^n(r^*)\}_{n\ge 1}$ is precompact in $(\mathcal{C},\|\cdot\|_{\mathcal{C}})$. For any given $y\in \mathbb{R}$,  let $\hat{Q}=T_yQ$, $\hat{L}=T_yL$, and $\hat{K}=T_yK$. Clearly,
		$\hat{Q}=\hat{L}+\hat{K}$ also satisfies the above conditions (1) and (2). It then
		follows that the forward orbit $\{\hat{Q}^n(r^*)\}_{n\ge 1}=\{(T_yQ)^n(r^*)\}_{n \ge 1}$ is precompact in $(\mathcal{C},\|\cdot\|_{\mathcal{C}})$.
	\end{proof}
	
	\
	
		\noindent
			{\bf Acknowledgements.} T.  Yi's research is supported by the National Natural Science Foundation of  China (NSFC 12231008),  and X.-Q. Zhao's research is supported in part by the NSERC of Canada (RGPIN-2019-05648 and RGPIN-2025-04963).

\end{document}